\newcommand {\ctn}{\citet} 
\newcommand{\bzero}{\boldsymbol{0}}
\newtheorem{theorem}{Theorem}
\newtheorem{corollary}[theorem]{Corollary}
\newtheorem{lemma}[theorem]{Lemma}
\newtheorem{remark}[theorem]{Remark}
\newenvironment{proof}[1][Proof]{\textbf{#1.} }{\ \rule{0.5em}{0.5em}}
\numberwithin{equation}{section}
\numberwithin{algo}{section}
\numberwithin{table}{section}
\numberwithin{figure}{section}
\begin{document}
\normalsize

\title{\vspace{-0.8in}
On Classical and Bayesian Asymptotics in State Space Stochastic Differential Equations}
\author{Trisha Maitra and Sourabh Bhattacharya\thanks{
Trisha Maitra is a postdoctoral fellow and Sourabh Bhattacharya 
is an Associate Professor in
Interdisciplinary Statistical Research Unit, Indian Statistical
Institute, 203, B. T. Road, Kolkata 700108.
Corresponding e-mail: sourabh@isical.ac.in.}}
\date{\vspace{-0.5in}}
\maketitle%

\begin{abstract}
In this article we investigate consistency and asymptotic normality of the maximum likelihood and
the posterior distribution of the parameters in the context of state space stochastic differential
equations ($SDE$s). We then extend our asymptotic theory to random effects models based on systems of
state space $SDE$s, covering both independent and identical and independent but non-identical 
collections of state space $SDE$s.
We also address asymptotic inference in the case of multidimensional linear random effects, and in
situations where the data are available in discretized forms. It is important to note that asymptotic
inference, either in the classical or in the Bayesian paradigm, has not been hitherto investigated
in state space $SDE$s.
\\[2mm]
{\it {\bf Keywords:} Asymptotic normality; Kullback-Leibler divergence; Posterior consistency; Random effects; 
State space stochastic differential equations; Stochastic stability.}
 
\end{abstract}

\section{Introduction}
\label{sec:intro}

State-space models are well-known for their versatility in modeling complex dynamic systems
in the context of discrete time, and have important applications in various disciplines
like engineering, medicine, finance and statistics. As is also well-known, most time series models
of interest can be expressed in the form of state space models; see, for example, \ctn{Durbin01}
and \ctn{Shumway11}. 
Discrete time state space models are characterized by a latent, unobserved stochastic process, 
$X=\left\{X(t);t=0,1,2,\ldots\right\}$ and another stochastic process $Y=\left\{Y(t);t=0,1,2,\ldots\right\}$, 
the distribution
of which depends upon $X$. The observed time series data are modeled by the conditional distribution
of $Y$ given $X$, where $X$ is assumed to have some specified distribution. An important special case
of such discrete state space models is the hidden Markov model. Here $X$ is assumed to be a Markov chain,
the distribution of $Y(t)$ depends upon $X(t)$, and conditionally on $X(t)$'s, $Y(t)$'s are independent. 
Such models have important applications in engineering, finance, biology, statistics; see, for example,
\ctn{Elliott95} and \ctn{Cappe05}.


However, when the time is continuous, research on state space or hidden Markov models seem to be
much scarce. Ideally, one should consider a pair of stochastic differential equations ($SDE$s)
whose solutions would be the continuous time processes $Y=\left\{Y(t):t\in[0,T]\right\}$ and 
$X=\left\{X(t):t\in[0,T]\right\}$.
In fact, the $SDE$ with solution $Y$ should depend upon $X$. Since the solutions of $SDE$s under general
regularity conditions are Markov processes (see, for example, \ctn{Mao11}), $X$ would turn out to be
a Markov process, and conditionally on $X$, $Y$ would also be a Markov process. Thus, such an approach could be
interpreted as continuous time versions of the traditional discrete time hidden Markov model based approach.
Continuous time models closely resembling the above-mentioned type exists in the literature, but rather than
estimating relevant parameters, filtering theory has been considered. For instance,
\ctn{Stratonovich68}, \ctn{Jazwinski70}, \ctn{Maybeck79}, \ctn{Maybeck82},
\ctn{Sarkka06}, \ctn{Crisan11} consider the filtering problem in state space $SDE$s of the following type:
\begin{align}
d Y(t)&=b_Y(X(t),t)dt+dW_Y(t);
\label{eq:data_sde0}\\
d X(t)&=b_X(X(t),t)dt+\sigma_X(X(t),t)dW_X(t),
\label{eq:state_space_sde0}
\end{align}
where $W_Y$ and $W_X$ are independent standard Wiener processes, $b_Y$, $b_X$ are real-valued
drift functions, and $\sigma_X$ is the real-valued diffusion coefficient.
The $SDE$s are assumed to satisfy the usual regularity conditions that guarantee existence
of strong solutions; see, for example, \ctn{Arnold74}, \ctn{Oksendal03}, \ctn{Mao11}.
The purpose of filtering theory is to compute the posterior distribution of the latent process
conditional on the observed process. 
This can be obtained from the the continuous-time optimal filtering equation, which 
is, in fact, the Kushner-Stratonovich equation (\ctn{Kushner64}, \ctn{Bucy65}). Note that
(see \ctn{Sarkka12b}, for example) it is possible to obtain the latter as continuous-time limits of the Bayesian 
filtering equations. The so-called Zakai equation (\ctn{Zakai69}) provides a simplified form 
by removing the non-linearity in the Kushner-Stratonovich equation. In the special case of (\ref{eq:data_sde0})
and (\ref{eq:state_space_sde0}), with $b_Y(X(t),t)=L(t)X(t)$, $b_X(X(t),t)=H(t)X(t)$ and $\sigma_X(X(t),t)=\sigma_X(t)$,
exact solution of the filtering problem, known as the Kalman-Bucy filter (\ctn{Kalman61}), can be obtained.
In the non-linear cases various approximations are employed; see \ctn{Crisan11}, \ctn{Sarkka07}, \ctn{Sarkka13},
among others.

In pharmocokinetic/pharmacodynamic contexts, the following type of model is regarded
as the state space model, assuming $\left\{Y_1,\ldots,Y_n\right\}$ are observed at discrete 
times $\left\{t_1,\ldots,t_n\right\}$:
\begin{align}
&Y_j = b_Y(X_{t_j},\theta)+\sigma_Y(X_{t_j},\theta)\epsilon_j;\quad \epsilon_j\stackrel{iid}{\sim}N\left(0,1\right);
\label{eq:data_sde_pkpd}\\
&d X(t)=b_X(X(t),t,\theta)dt+\sigma_X(X(t),\theta)dW_X(t),
\label{eq:state_space_sde_pkpd}
\end{align}
where $b_Y$ and $\sigma_Y$ are appropriate real-valued functions, and $\theta$ denotes the set
of relevant parameters. The standard choices of $\sigma_Y$ 
are $\sigma_Y(x,\theta)=\sigma$ (homoscedastic model) and $\sigma_Y(x,\theta)=a+\sigma b_Y(x,\theta)$
(heterogeneous model), and $b_Y$ is usually chosen to be a linear function.
Thus, even though the latent process $X$ is described as the solution of the $SDE$ (\ref{eq:state_space_sde_pkpd}),
the model for the (discretely) observed data is postulated to be arising from independent normal distributions, conditional
on the discretized version of the diffusion process $X$. This simplifies inference proceedings to a large extent,
particularly when the Markov transition model associated with (\ref{eq:state_space_sde_pkpd}) is available explicitly.
Here we recall that under suitable regularity conditions, the solution of (\ref{eq:state_space_sde_pkpd})
is a continuous time Markov process (see, for example, \ctn{Arnold74}, \ctn{Oksendal03}, \ctn{Mao11}). 
If the Markov transition model is not available in closed form, then various approximations are proposed
in the literature to approximate the likelihood of $\theta$, using which the $MLE$ of $\theta$
or the posterior distribution of $\theta$ is obtained. Under special cases, for instance, when
$\sigma_Y(x,\theta)=\sigma$, $b_Y(x,\theta)=b_{\theta}x$, $\sigma_X(x,\theta)=\sigma_{\theta}$, 
$b_X(x_t,t,\theta)=a_{\theta}x_t+c_{\theta}(t)$, an explicit form of the likelihood (based on discretization) is available,
and the resulting $MLE$ has been shown to be consistent and asymptotically normal by \ctn{Favetto10}, but
in more general, non-linear situations, theoretical results do not seem to be available.
A comprehensive account of the methods of approximating the $MLE$ and posterior distribution of $\theta$, with
discussion of related computational issues and theoretical results, have been provided in \ctn{Donnet13}.

Our interest in this article is primarily the investigation of asymptotic parametric inference, as $T\rightarrow\infty$, 
from both classical and Bayesian
perspectives, in the context of state space models where the models for the observed data as well as the latent process,
are both described by $SDE$s. In Section \ref{subsec:particle_filtering_asymptotics} we show that such asymptotic parametric inference also addresses consistency of the 
so-called particle filtering problem associated with the joint posterior distribution of the parameters and the latent states $X(t)$ given the 
data $\left\{Y(s):0\leq s\leq t\right\}$. For relatively recent research works on the particle filtering problem in non-$SDE$ setups, see, for example,
\ctn{Chopin13}, \ctn{Crisan13}, \ctn{Urteaga16}, \ctn{Martino17}.

In our knowledge, asymptotic inference in such models 
has not been hitherto investigated.
In our proceedings we assume a somewhat generalized version of the state space $SDE$s  
described by (\ref{eq:data_sde0}) and (\ref{eq:state_space_sde0}) 
in that the drift function $b_Y$ depends upon $Y(t)$, in addition to $X(t)$ and $t$; moreover, we assume that
there is a diffusion coefficient $\sigma_Y(Y(t),X(t),t)$ associated with the Wiener process $W_Y(t)$
that drives the observational $SDE$ (\ref{eq:data_sde0}); a practical instance of such a state space model
in the case of bacterial growth can be found in \ctn{Moller12}.
We further assume that there is a common set of parameters $\theta$ associated with both the $SDE$s, which are
of interest. In particular, we assume that there exist appropriate real-valued, known,
functions for $\theta$, $\psi_Y(\theta)$ and
$\psi_X(\theta)$, such that the drift functions are $\psi_Y(\theta)b_Y(Y(t),X(t),t)$ and $\psi_X(\theta)b_X(X(t),t)$, 
respectively. In Section \ref{sec:assumptions} we clarify that $\psi_Y(\theta)$ and $\psi_X(\theta)$ offers
very general scope of parameterizations by mapping the perhaps high-dimensional (although finite-dimensional)
quantity $\theta$ to appropriate real-valued functional forms composed of the elements of $\theta$. 
We also assume that the
diffusion coefficients of the respective $SDE$s are independent of $\theta$.
A key assumption in our approach to asymptotic investigation is that $X$ is stochastically stable. In a nutshell,
in this article, by stochastic stability of $X$ we mean that 
\begin{equation}
|x(t)|\leq \xi\lambda(t)~~\mbox{for all}~t\geq 0,
\label{eq:ss1}
\end{equation}
almost surely, for all initial values $x(0)\in\mathbb R$,
where $\lambda(t)\rightarrow 0$ as $t\rightarrow\infty$, 
and $\xi$ is a non-negative, finite random
variable depending upon $x(0)$.
For comprehensive details regarding various versions of stochastic stability of solutions of $SDE$s,
see \ctn{Mao11}. 

It is to be noted that our model clearly corresponds to a dependent setup, and establishment of asymptotic
results are therefore can not be achieved by the state-of-the-art methods that typically deal with
at least independent situations. For Bayesian asymptotics we find the consistency results of \ctn{Shalizi09} 
and the general result on posterior asymptotic normality of \ctn{Schervish95} useful
for our purpose, while for classical asymptotics we obtain a suitable asymptotic approximation to the target
log-likelihood, which helped us establish strong consistency, as well as asymptotic normality of the $MLE$.

Once we establish classical and Bayesian asymptotic results associated with our state space $SDE$ model, we then 
extend our model to random effects state space model (see \ctn{Maud12}, for instance, for $SDE$ based
random effects model), where we model each time series data available on $n$ individuals
using our state space model, assuming that the effects $\psi_{Y_i}(\theta)$ and $\psi_{X_i}(\theta)$ for individual $i$ 
are parameterized by $\theta$, which is the parameter of interest. From the classical point of view, this is not a 
random effects model technically since $\theta$ is treated as a fixed quantity, but from the Bayesian viewpoint, 
a prior on $\theta$
renders the effects random. Slightly abusing terminology for the sake of convenience, we continue to call the model 
random effects stochastic $SDE$, from both classical and Bayesian perspectives.  
Under such random effects $SDE$ model we seek asymptotic classical and Bayesian inference on $\theta$ as both number
of individuals, $n$, and the domain of observations $[0,T_i];~i=1,\ldots,n$ increase indefinitely. For our
purpose we assume $T_i=T$ for each $i$. Here we remark that \ctn{Donnet13} discuss population $SDE$ models
with measurement errors; see also \ctn{Overgaard05}, \ctn{Donnet08}, \ctn{Yan14}, \ctn{Leander15}; 
for the $i$-th individual 
such models are of the same form as  
(\ref{eq:data_sde_pkpd}) and (\ref{eq:state_space_sde_pkpd}), but specifics depending upon $i$, and with
$\theta$ replaced with $\phi_i$, where 
$\left\{\phi_1,\ldots,\phi_n\right\}$ are independently and identically distributed
with some distribution with parameter $\theta$, say, which is one of the parameters of interest.
This is a genuine random effects model unlike ours, but here only the latent process $X$ is based upon
$SDE$. Theoretical results do not exist for this setup; see \ctn{Donnet13}. On the other hand,
even though our random effects state space $SDE$ model is completely based upon $SDE$s, the simplified
form of the effects, parameterized by a common $\theta$, enables us to obtain desired asymptotic results
for both classical and Bayesian paradigms. Indeed, in our case it is certainly possible to postulate a genuine
random effects state space $SDE$ model by replacing $\psi_{Y_i}(\theta)$ and $\psi_{X_i}(\theta)$ with 
$iid$ random effects $\phi_{Y_i}$ and $\phi_{X_i}$, having distributions parameterized by quantities of
inferential interest $\theta_Y$ and $\theta_X$, say, but in this setup complications arise regarding handling the
observed integrated likelihood and its associated bounds, which does not assist in our asymptotic investigations.

Discretization of our state space $SDE$ models is essential for practical applications such as in fields
of pharmacokinetics/pharmacodynamics, where continuous time data are usually unavailable. We show in the supplement that
the same asymptotic results go through in discretized situations.

In our proceedings with each setup, we first investigate Bayesian consistency, then consistency and
asymptotic normality of the $MLE$, and finally asymptotic posterior normality. 
One reason behind this sequence is that the proofs of the results on posterior normality depend upon
the proofs of the results of consistency and asymptotic normality of $MLE$, which, in turn, depend upon the 
proofs associated with Bayesian posterior consistency. Moreover, adhering to this sequence allows us to 
introduce the assumptions in a sequential manner, so that an overall logical order could be maintained
throughout the paper.

The rest of our article is organized as follows. In Section \ref{sec:ss_formulations} we introduce our state
space $SDE$ model and provide an overview of the asymptotic results in Section \ref{sec:overview_main_results}. We list the various sets
of assumptions including stochastic stability of the 
solution of the latent $SDE$, in Section \ref{sec:assumptions}. 
Development of the asymptotic theory requires asymptotic approximation of the true and observed likelihoods. Such asymptotic
approximations are developed in Section \ref{sec:asymp_approx_likelihoods}, under suitable regularity conditions.
Next, in Section \ref{sec:posterior_convergence}, with further regularity conditions,
we prove posterior convergence of $\theta$ by proving validity of the conditions of Shalizi stated formally for our state space $SDE$ setup
in Section \ref{sec:assumptions_shalizi} of the supplement. 
We prove strong consistency and asymptotic normality of the $MLE$ in Section \ref{sec:classical_asymptotics}, under further extra assumptions.
With a few more regularity conditions, In Section \ref{sec:posterior_normal} we establish asymptotic posterior normality of $\theta$. 
We introduce random effects state space $SDE$ models in Section \ref{sec:random_effects_brief} and provide a briefing of the asymptotic results,
with the details in Section \ref{sec:random_effects} of the supplement.
Finally, in Section \ref{sec:conclusion} we provide a brief summary of our work, discuss some key issues, and identify
future research agenda. The extension of our theory for state space $SDE$ models
with multidimensional linear random effects and in the case of discretized data are discussed, respectively, 
in Sections \ref{sec:multidimensional} and \ref{sec:discrete_data} of the supplement.


\section{State space $SDE$}
\label{sec:ss_formulations}

\subsection{True and postulated state space $SDE$ models}
\label{subsec:formulation}
First consider the following ``true" state space $SDE$:
\begin{align}
d Y(t)&=\phi_{Y,0} b_Y(Y(t),X(t),t)dt+\sigma_Y(Y(t),X(t),t)dW_Y(t);
\label{eq:data_sde1}\\
d X(t)&=\phi_{X,0}b_X(X(t),t)dt+\sigma_X(X(t),t)dW_X(t),
\label{eq:state_space_sde1}
\end{align}
for $t\in [0,b_T]$, where $b_T\rightarrow\infty$, as $T\rightarrow\infty$. 
The first $SDE$, namely, (\ref{eq:data_sde1}) is the true observational
$SDE$ and is associated with the observed data. The second $SDE$ (\ref{eq:state_space_sde1})
is the true evolutionary, unobservable $SDE$.
In the above two equations, we assume that $\phi_{Y,0}$ and $\phi_{X,0}$ are both explained
by a ``true" set of parameters $\theta_0$, through known but perhaps different functions of $\theta_0$.
In other words, we assume that $\phi_{Y,0}=\psi_Y(\theta_0)$ and $\phi_{X,0}=\psi_X(\theta_0)$, where
$\psi_Y$ and $\psi_X$ are known functions. 
Note that this is a general formulation, where we allow the possibility $\theta_0=(\theta_{Y,0},\theta_{X,0})$
and choice of $\psi_Y$ and $\psi_X$ such that 
$\psi_Y(\theta_0)=\theta_{Y,0}$ and $\psi_X(\theta_0)=\theta_{X,0}$, 
for scalars $\theta_{Y,0}$ and $\theta_{X,0}$.  
In this instance, the observational and evolutionary $SDE$s have their
own sets of parameters. We also allow common subsets of the parameter vector $\theta_0$ to feature in
the two $SDE$s. For instance, $\psi_Y(\theta_0)=\theta_{Y,0}+\theta_{X,0}$ and 
$\psi_X(\theta_0)=\theta_{X,0}$. Indeed, $\theta_0$ can be any finite-dimensional vector, appropriately
mapped to the real line by $\psi_Y$ and $\psi_X$.
We wish to learn about the set of parameters $\theta_0$, which would enable learning 
about $\phi_{Y,0}$ and $\phi_{X,0}$ simultaneously. For our purpose, we assume that $(\psi_Y(\theta),\psi_X(\theta))$
is identifiable in $\theta$, that is, $(\psi_Y(\theta_1),\psi_X(\theta_1))=(\psi_Y(\theta_2),\psi_X(\theta_2))$
implies $\theta_1=\theta_2$.

Our modeled state space $SDE$ is analogously given, for $t\in [0,b_T]$ by:
\begin{align}
d Y(t)&=\phi_Y b_Y(Y(t),X(t),t)dt+\sigma_Y(Y(t),X(t),t)dW_Y(t);
\label{eq:data_sde2}\\
d X(t)&=\phi_Xb_X(X(t),t)dt+\sigma_X(X(t),t)dW_X(t),
\label{eq:state_space_sde2}
\end{align}
where $\phi_{Y}=\psi_Y(\theta)$ and $\phi_{X}=\psi_X(\theta)$.

Throughout, we assume that the initial values associated with the $SDE$s (\ref{eq:data_sde1}),
(\ref{eq:state_space_sde1}), (\ref{eq:data_sde2}) and (\ref{eq:state_space_sde2}), are non-random.
It is worth mentioning in this context that for stochastic stability it is enough to
assume non-randomness of the initial value; see \ctn{Mao11}, page 111, for a proof of this.

We wish to establish consistency and asymptotic normality of 
the maximum likelihood estimator ($MLE$) and the posterior distribution of 
$\theta$, as $T\rightarrow\infty$.
For technical reasons we shall consider the likelihood for $t\in [a_T,b_T]$, where $a_T\rightarrow\infty$
and $(b_T-a_T)\rightarrow\infty$, as $T\rightarrow\infty$.
In particular, we assume that $(b_T-a_T)\geq T$.

\subsection{Connection of parametric asymptotic inference with the asymptotics of the particle filtering problem}
\label{subsec:particle_filtering_asymptotics}

As already mentioned, in this article we focus on classical and Bayesian asymptotic inference on the parameter $\theta$. However, such asymptotic parametric inference
automatically leads to asymptotic inference regarding the particle filtering problem. 
To clarify, first let 
$\mathcal Y_t=\left\{Y(s):0\leq s\leq t\right\}$, for $t\in [0,b_T]$, and let 
$\hat\theta_T$ denote the $MLE$ of $\theta$ or the posterior expectation of $\theta$, given the data
$\mathcal Y_T$. Then provided that $\hat\theta_T\rightarrow\theta_0$ almost surely (or in probability), for each $t\in[0,b_T]$, the posterior distribution
$\pi\left(X(t)|\hat\theta_T,\mathcal Y_t\right)\rightarrow \pi\left(X(t)|\theta_0,\mathcal Y_t\right)$, as $T\rightarrow\infty$, 
almost surely (or in probability), if $\pi\left(X(t)|\theta,\mathcal Y_t\right)$ is continuous in $\theta$. As a simple example, let us assume that
$b_Y(Y(t),X(t),t)=L(t)X(t)$, $b_X(X(t),t)=H(t)X(t)$, $\sigma_Y(Y(t),X(t),t)\equiv 1$ and $\sigma_X(X(t),t)=\sigma_X(t)$. Also, let us assume that
$\psi_Y(\theta)$ and $\psi_X(\theta)$ are continuous in $\theta$. Then the Kalman-Bucy filter ensures that $\pi\left(X(t)|\theta,\mathcal Y_t\right)$
is a Gaussian density with mean and variances depending upon $t$, and the density is continuous in $\theta$. Letting $\mathcal X_t=\left\{X(s):0\leq s\leq b_T\right\}$,
we similarly have 
$\pi\left(\mathcal X_t|\hat\theta_T,\mathcal Y_t\right)\rightarrow \pi\left(\mathcal X_t|\theta_0,\mathcal Y_t\right)$, as $T\rightarrow\infty$,
almost surely (or in probability).


\section{A brief overview of the main asymptotic results}
\label{sec:overview_main_results}

\subsection{Posterior convergence of $\theta$}
\label{subsec:posterior_consistency}

Our main result on posterior convergence of $\theta$ is based on verification of a general posterior convergence result of \ctn{Shalizi09}, which amounts
to validating seven regularity conditions required by Shalizi's result, which we denote by (A1) -- (A7). 
We present the assumptions and the result of Shalizi in Section \ref{sec:assumptions_shalizi}
of the supplement. The most essential notions, the key assumption, and our main result on posterior convergence with a brief sketch of the proof utilizing the key
assumption of Shalizi, are presented below.

Let $\mathcal F_T=\sigma(\left\{Y(s):s\in[a_T,b_T]\right\})$ 
denote the $\sigma$-algebra generated by $\left\{Y(s):s\in[a_T,b_T]\right\}$.
Let $\mathcal T$ denote the $\sigma$-algebra associated with the $d~(\geq 1)$-dimensional parameter space $\Theta$.

Let $p_T(\theta_0)$ denote the marginal likelihood of $\{Y(t):t\in [a_T,b_T]\}$ of the true model (\ref{eq:data_sde1}) and (\ref{eq:state_space_sde1}).
Also, let $L_T(\theta)$ be the modeled likelihood of $\{Y(t):t\in [a_T,b_T]\}$ of the postulated model (\ref{eq:data_sde2}) and (\ref{eq:state_space_sde2}).
We denote $\frac{L_T(\theta)}{p_T(\theta_0)}$ by $R_T(\theta)$.
For every $\theta\in\Theta$, the Kullback-Leibler divergence rate is given by
\begin{equation*}
h(\theta)=\underset{T\rightarrow\infty}{\lim}~\frac{1}{b_T-a_T}E_{\theta_0}\left(-\log R_T(\theta)\right),
\end{equation*}
where $E_{\theta_0}$ denotes the expectation is with respect to the true model. 

For $A\subseteq\Theta$, let
\begin{align}
h\left(A\right)&=\underset{\theta\in A}{\mbox{ess~inf}}~h(\theta);\notag\\
J(\theta)&=h(\theta)-h(\Theta);\notag\\
J(A)&=\underset{\theta\in A}{\mbox{ess~inf}}~J(\theta).\notag
\end{align}
The above essential infimums are with respect to the prior $\pi$ assigned for $\theta$.


With the above notions, our posterior convergence results are summarized by Theorem 
\ref{theorem:bayesian_convergence_brief}. 

\begin{theorem}
\label{theorem:bayesian_convergence_brief}
Assume that the data was generated by the true model given by (\ref{eq:data_sde1}) and (\ref{eq:state_space_sde1}), but
modeled by (\ref{eq:data_sde2}) and (\ref{eq:state_space_sde2}). 
For the prior $\pi$ on $\theta$, consider any set $A\in\mathcal T$ with $\pi(A)>0$ and $h(A)>h(\Theta)$. 
Then, under suitable assumptions, almost surely,
\begin{equation*}
\underset{T\rightarrow\infty}{\lim}~\pi(A|\mathcal F_T)=0.
\end{equation*}
Moreover, if the set $A$ satisfies another technical condition, 
then almost surely,
\begin{equation*}
\underset{T\rightarrow\infty}{\lim}~\frac{1}{b_T-a_T}\log\pi(A|\mathcal F_T)=-J(A).
\end{equation*}
\end{theorem}
\begin{proof}[Sketch of the proof]
The proof follows by verifying the seven assumptions of Shalizi, which are shown to hold under appropriate conditions.
The most important result guiding posterior convergence is the asymptotic equipartition property, which is given in this case by
\begin{equation*}
\frac{1}{b_T-a_T}\log R_T(\theta)\rightarrow -\frac{1}{2}\left[K_Y\left(\phi_Y-\phi_{Y,0}\right)^2+
K_X\left(\phi_X-\phi_{X,0}\right)^2+K_X\left(\phi^2_{X,0}-\phi^2_{X}\right)\right]=-h(\theta),
\end{equation*}
where
\begin{align*}
h(\theta)&=\frac{1}{2}\left[K_Y\left(\phi_Y-\phi_{Y,0}\right)^2+K_X\left(\phi_X-\phi_{X,0}\right)^2
+K_X\left(\phi^2_{X,0}-\phi^2_{X}\right)\right]\notag\\
&=\frac{1}{2}\left[K_Y\left(\psi_Y(\theta)-\psi_Y(\theta_0)\right)^2
+K_X\left(\psi_X(\theta)-\psi_X(\theta_0)\right)^2
+K_X\left(\psi^2_X(\theta_0)-\psi^2_{X}(\theta)\right)\right].
\end{align*}
In the above, $K_X~(>0)$ and $K_Y~(>0)$ are the limits of the bounds of $b^2_Y(y,x,t)/\sigma^2_Y(y,x,t)$ and $b^2_X(x,t)/\sigma^2_X(x,t)$, respectively, as $T\rightarrow\infty$.

This result is achieved using the following approximations proved subsequently: $p_T(\theta_0)\stackrel{a.s.}{\sim}\hat p_T(\theta_0)$ and 
$L_T(\theta)\stackrel{a.s.}{\sim}\hat L_T(\theta)$, where
\begin{equation*}
\hat p_T(\theta_0)
=\exp\left(\frac{(b_T-a_T)K_Y\phi^2_{Y,0}}{2}+\phi_{Y,0}\sqrt{K_Y}\left(W_Y(b_T)-W_Y(a_T)\right)
+(b_T-a_T)K_X\phi^2_{X,0}\right).
\end{equation*}
\begin{align}
\hat L_T(\theta)
&=\exp\left((b_T-a_T)K_Y\phi_Y\phi_{Y,0}+\phi_Y\sqrt{K_Y}\left(W_Y(b_T)-W_Y(a_T)\right)\right.\notag\\
&\qquad\qquad\left.-\frac{(b_T-a_T)K_Y\phi^2_Y}{2}+(b_T-a_T)K_X\phi_X\phi_{X,0}\right),\notag
\end{align}
and then noting that, as $T\rightarrow\infty$,
\begin{align}
&\frac{1}{b_T-a_T}\log R_T(\theta)=\frac{1}{b_T-a_T}\log\left(\frac{L_T(\theta)}{p_T(\theta_0)}\right)\notag\\
&\stackrel{a.s.}{\sim}
-\frac{K_Y}{2}\left(\phi_Y-\phi_{Y,0}\right)^2
+\sqrt{K_Y}\left(\phi_Y-\phi_{Y,0}\right)\frac{\left(W_Y(b_T)-W_Y(a_T)\right)}{b_T-a_T}\notag\\
&\qquad\qquad-\frac{K_X}{2}\left(\phi_X-\phi_{X,0}\right)^2
+\frac{K_X}{2}\left(\phi^2_X-\phi^2_{X,0}\right)\notag\\
&\stackrel{a.s.}{\longrightarrow} 
-\frac{1}{2}\left[K_Y\left(\phi_Y-\phi_{Y,0}\right)^2+
K_X\left(\phi_X-\phi_{X,0}\right)^2+K_X\left(\phi^2_{X,0}-\phi^2_{X}\right)\right].\notag
\end{align}

It is important to note that compactness of the parameter space $\Theta$ is not necessary for Theorem \ref{theorem:bayesian_convergence_brief} to hold. Instead,
we constructed appropriate ``sieves" of the form $\mathcal G_T=\left\{\theta:\left|\psi_Y(\theta)\right|\leq\exp\left(\beta\left(b_T-a_T\right)\right)\right\}$
with $\beta>2h\left(\Theta\right)$
that are compact for each $T$ and increasing in $T$ and such the prior probability of the complement $\mathcal G^c_T$
is exponentially small, and satisfies some other technical conditions that essentially guarantee posterior convergence, along with the asymptotic equipartition property. 
\end{proof}

\begin{remark}
In particular, let $A_{\epsilon}=\left\{\theta\in\Theta:h(\theta)>h\left(\Theta\right)+\epsilon\right\}$, for $\epsilon>0$. Then note that 
$h\left(A_{\epsilon}\right)>h\left(\Theta\right)$, for any $\epsilon>0$. Let $\pi\left(A_{\epsilon}\right)>0$. Then by the first part of 
Theorem \ref{theorem:bayesian_convergence_brief}, 
$\pi\left(A_{\epsilon}|\mathcal F_T\right)\rightarrow 0$, almost surely, as $T\rightarrow\infty$, for any $\epsilon>0$.
It is also important to note that if $\theta_0$ belongs to the support of the prior on $\Theta$, then $h\left(\Theta\right)=0$. In this case,
the posterior probability of $A_{\epsilon}=\left\{\theta\in\Theta:h(\theta)>\epsilon\right\}$ tends to zero almost surely, for any $\epsilon>0$. 
\end{remark}


\subsection{Consistency of the $MLE$ of $\theta$}
\label{subsec:consistency_MLE}
Let $\theta\in\Theta\subseteq\mathbb R^d$, where $\Theta$ is the $d~(\geq 1)$-dimensional, compact parameter space.
Our main result on consistency of the $MLE$ of $\theta$ can be formalized as the following theorem.
\begin{theorem}
\label{theorem:mle_strong_consistency1_brief}
Assume that the data was generated by the true model given by (\ref{eq:data_sde1}) and (\ref{eq:state_space_sde1}), but
modeled by (\ref{eq:data_sde2}) and (\ref{eq:state_space_sde2}).
Then under appropriate regularity conditions the $MLE$ $\hat\theta_T$ of $\theta$ is strongly consistent in the sense that
$\hat\theta_T\stackrel{a.s.}{\longrightarrow}\theta_0$. 
\end{theorem}
\begin{proof}[Sketch of the proof]
Identifiability of the model and uniqueness of the $MLE$ follow from our assumptions.
To prove strong consistency of the $MLE$, we first note that the $MLE$ can be approximated by maximizing the function
$$\tilde g_T(\theta)=g_{Y,T}(\theta)+g_{X,T}(\theta)$$ with respect to $\theta$, where
\begin{align}
g_{Y,T}(\theta)&=-\frac{K_Y}{2}\left(\psi_Y(\theta)-\psi_Y(\theta_0)\right)^2
+\sqrt{K_Y}\left(\psi_Y(\theta)-\psi_Y(\theta_0)\right)\frac{W_Y(b_T)-W_Y(a_T)}{b_T-a_T};\notag\\
g_{X,T}(\theta)&=-\frac{K_X}{2}\left(\psi_X(\theta)-\psi_X(\theta_0)\right)^2
+\frac{K_X}{2}\left(\psi^2_X(\theta)-\psi^2_X(\theta_0)\right).\notag
\end{align}

Letting $\hat\theta_{T}$ denote the $MLE$, note that
\begin{equation*}
0=\tilde g'_T(\hat\theta_{T})=\tilde g'_T(\theta_0)+\tilde g''_T(\theta^*_T)(\hat\theta_T-\theta_0),
\end{equation*}
where $\theta^*_T$ lies between $\theta_0$ and $\hat\theta_T$. Since $\tilde g'_T(\theta_0)\stackrel{a.s.}{\longrightarrow}0$ as $T\rightarrow\infty$
and since $\tilde g''_T(\theta^*_T)$ is positive definite for $T\geq 1$ under appropriate assumptions, it holds that
$\hat\theta_T\stackrel{a.s.}{\longrightarrow}\theta_0$, as $T\rightarrow\infty$.
\end{proof}
\begin{remark}
Note that compactness of $\Theta$ is not necessary for Bayesian consistency, in contrast with consistency of the $MLE$.
\end{remark}

\subsection{Asymptotic normality of the $MLE$ of $\theta$}
\label{subsec:normality_MLE}

For asymptotic normality of the $MLE$ of $\theta$, the result is summarized by the following theorem.
\begin{theorem}
\label{theorem:mle_normality1_brief}
Assume that the data was generated by the true model given by (\ref{eq:data_sde1}) and (\ref{eq:state_space_sde1}), but
modeled by (\ref{eq:data_sde2}) and (\ref{eq:state_space_sde2}).
Then under suitable assumptions the $MLE$ of $\theta$ is asymptotically normal in the sense that
$\sqrt{b_T-a_T}\left(\hat\theta_T-\theta_0\right)
\stackrel{\mathcal L}{\longrightarrow}N_d\left(0,\mathcal I^{-1}(\theta_0)\right)$. 
Here $\mathcal I(\theta)$ is the matrix with $(j,k)$-th element given by
\begin{equation*}
\left\{\mathcal I(\theta)\right\}_{jk}=K_Y\left[\frac{\partial\psi_Y(\theta)}{\partial\theta_j}\frac{\partial\psi_Y(\theta)}
{\partial\theta_k}\right].
\end{equation*}

\end{theorem}
\begin{proof}[Sketch of the proof]
Asymptotic normality follows easily from the above developments on consistency of $MLE$, and the 
fact that $\theta^*_T\stackrel{a.s.}{\longrightarrow}\theta_0$, and $\frac{W_Y(b_T)-W_Y(a_T)}{b_T-a_T}\stackrel{a.s.}{\longrightarrow} 0$, 
as $T\rightarrow\infty$. 

Observe that $\left\{\mathcal I(\theta_0)\right\}_{jk}$ is the covariance between the $j$-th
and the $k$-th components of $\sqrt{b_T-a_T}\tilde g'_T(\theta_0)$, and so $\mathcal I(\theta_0)$ is non-negative definite.
\end{proof}

\subsection{Asymptotic posterior normality of $\theta$}
\label{subsec:posterior_normality}

We prove posterior normality of $\theta$ by verifying the seven regularity conditions of Theorem 7.102 of \ctn{Schervish95}.
\begin{theorem}
\label{theorem:theorem5_brief}
Assume that the data was generated by the true model given by (\ref{eq:data_sde1}) and (\ref{eq:state_space_sde1}), but
modeled by (\ref{eq:data_sde2}) and (\ref{eq:state_space_sde2}).
Then denoting 
$\Psi_T=(b_T-a_T)^{\frac{1}{2}}\mathcal I^{\frac{1}{2}}(\theta_0)\left(\theta-\hat\theta_T\right)$,
for each compact subset 
$B$ of $\mathbb R^d$ and each $\epsilon>0$, the following holds under appropriate assumptions:
\begin{equation*}
\lim_{T\rightarrow\infty}P_{\theta_0}
\left(\sup_{\Psi_T\in B}\left\vert\pi(\Psi_T\vert\mathcal F_T)-\varrho(\Psi_T)\right\vert>\epsilon\right)=0,
\end{equation*}
where $\varrho(\cdot)$ denotes the density of the standard normal distribution.
\end{theorem}
\begin{proof}[Sketch of the proof]
Here we assume that $\Theta$ is compact 
which enables us to uniformly approximate
$\frac{1}{b_T-a_T}\log R_T(\theta)$ by $g_{Y,T}(\theta)+g_{X,T}(\theta)$ for $\theta\in\Theta$. 
Hence, $\frac{1}{b_T-a_T}\log L_T(\theta)$ can be uniformly approximated by 
$\frac{1}{b_T-a_T}\tilde\ell_T(\theta)=g_{Y,T}(\theta)+g_{X,T}(\theta)+\frac{1}{b_T-a_T}\log p_T(\theta_0)$, for $\theta\in\Theta$.
This is the key idea, and by working with the first three differentials of $\tilde\ell_T(\theta)$, in conjunction with Taylor's series expansion and our proven result
that $\hat\theta_T\stackrel{a.s.}{\longrightarrow}\theta_0$, the seven regularity conditions of Theorem 7.102 of \ctn{Schervish95} are relatively straightforward to verify.
\end{proof}

\section{Regularity conditions}
\label{sec:assumptions}
\subsection{Assumptions regarding $b_Y$ and $\sigma_Y$}
\label{subsec:assumptions_Y}
\begin{itemize}
\item[(H1)] For every $T>0$, and integer $\eta\geq 1$, given any $x$, there exists a positive constant
$K_{Y,x,T,\eta}$ such that for all $t\in [0,b_T]$ and all $(y_1,y_2)$ with $\max\{y_1,y_2\}\leq\eta$,
$$\max\left\{\left[b_Y(y_1,x,t)-b_Y(y_2,x,t)\right]^2,\left[\sigma_Y(y_1,x,t)-\sigma_Y(y_2,x,t)\right]^2\right\}
\leq K_{Y,x,T,\eta}|y_1-y_2|^2.$$
\item[(H2)] For every $T>0$, given any $x$, there exists a positive constant $K_{x,T}$ such that for all 
$(y,t)\in\mathbb R\times [0,T]$, 
$$\max\left\{b^2_Y(y,x,t), \sigma^2_Y(y,x,t)\right\}\leq K_{x,T}\left(1+y^2\right).$$
\item[(H3)]  For every $T>0$, there exist positive constants 
$K_{Y,1,T}$, $K_{Y,2,T}$, $\alpha_{Y,1}$, $\alpha_{Y,2}$
such that for all $(x,t)\in\mathbb R\times [0,b_T]$,
$$K_{Y,1,T}\left(1-\alpha_{Y,1}x^2\right)
\leq\frac{b^2_Y(y,x,t)}{\sigma^2_Y(y,x,t)}
\leq K_{Y,2,T}\left(1+\alpha_{Y,2}x^2\right),$$
where $K_{Y,1,T}\rightarrow K_Y$ and $K_{Y,2,T}\rightarrow K_Y$ as $T\rightarrow\infty$; $K_Y$ being a 
positive constant.
We further assume that for $j=1,2$, $(b_T-a_T)\left|K_{Y,j,T}-K_Y\right|\rightarrow 0$, as $T\rightarrow\infty$. 
\end{itemize}
In (H3) we have assumed that the bounds of $\frac{b^2_Y(y,x,t)}{\sigma^2_Y(y,x,t)}$ do not depend upon $y$,
which is somewhat restrictive. Dependence of the bounds on $y$ can be insisted upon, but at the cost of the assumption
of stochastic stability of $Y$ in addition to that of $X$. See Section \ref{sec:conclusion} for details regarding
the modified assumption. All our results remain intact under the modified assumption.
It is also important to clarify that the lower bound in (H3), when utilized in our $SDE$ context,
becomes non-negative after possibly a few time steps, thanks to the stochastic stability assumption
which ensures (\ref{eq:ss1}).

\subsection{Assumptions regarding $b_X$ and $\sigma_X$}
\label{subsec:assumptions_X}
\begin{itemize}
\item[(H4)] $b_X(0,t)=0=\sigma_X(0,t)$ for all $t\geq 0$.
\item[(H5)] For every $T>0$, and integer $\eta\geq 1$, there exists a positive constant
$K_{T,\eta}$ such that for all $t\in [0,b_T]$ and all $(x_1,x_2)$ with $\max\{x_1,x_2\}\leq\eta$,
$$\max\left\{\left[b_X(x_1,t)-b_X(x_2,t)\right]^2,\left[\sigma_X(x_1,t)-\sigma_X(x_2,t)\right]^2\right\}
\leq K_{T,\eta}|x_1-x_2|^2.$$
\item[(H6)] For every $T>0$, there exists a positive constant $K_T$ such that for all $(x,t)\in\mathbb R\times [0,b_T]$,
$$\max\left\{b^2_X(x,t),\sigma^2_X(x,t)\right\}\leq K_T\left(1+x^2\right).$$
\item[(H7)]  For every $T>0$, there exist positive constants 
$K_{X,1,T}$, $K_{X,2,T}$, $\alpha_{X,1}$, $\alpha_{X,2}$
such that for all $(x,t)\in\mathbb R\times [0,b_T]$,
$$K_{X,1,T}\left(1-\alpha_{X,1}x^2\right)
\leq\frac{b^2_X(x,t)}{\sigma^2_X(x,t)}
\leq K_{X,2,T}\left(1+\alpha_{X,2}x^2\right),$$
where $K_{X,1,T}\rightarrow K_X$ and $K_{X,2,T}\rightarrow K_X$, as $T\rightarrow\infty$;
$K_X$ being a positive constant. 
We also assume that for $j=1,2$, $(b_T-a_T)\left|K_{X,j,T}-K_X\right|\rightarrow 0$, as $T\rightarrow\infty$. 
\end{itemize}

\subsection{Further assumptions ensuring almost sure stochastic stability of $X(t)$}
\label{subsec:assumptions_stochastic_stability}
Let $\mathcal C$ denote the family of
all continuous non-decreasing functions $f:\mathbb R^+\mapsto\mathbb R^+$ such that $f(0)=0$ and
$f(r)>0$ when $r>0$.

Let $S_h=\{x\in\mathbb R:|x|<h\}$ and $\mathbb C(S_h\times[0,\infty);\mathbb R^+)$ denote the family of all continuous functions 
$V(x,t)$ from $S_h\times[0,\infty)$ to $\mathbb R^+$ with continuous first partial derivatives with respect
to $x$ and $t$. Also, let $\mathfrak C(S_h\times[0,\infty);\mathbb R^+)$, where $0<h\leq\infty$, denote the
family of non-negative functions $V(x,t)$ defined on
$S_h\times\mathbb R^+$ such that they are continuously twice differentiable in $x$ and once in $t$.
Let 
\begin{equation*}
LV(x,t) = V_t(x,t)+V_x(x,t)b_X(x,t)+\frac{1}{2}\sigma^2_X(x,t)V_{xx}(x,t),
\end{equation*}
where $V_t=\frac{\partial V}{\partial t}$, $V_x=\frac{\partial V}{\partial x}$,
and $V_{xx}=\frac{\partial^2 V}{\partial x^2}$.

With these definitions and notations, we now make the following assumption:
\begin{itemize}
\item[(H8)] Let $p>0$ and let there exist a function $V\in\mathfrak C(S_h\times[0,\infty);\mathbb R^+)$,
a continuous non-decreasing function $\gamma:\mathbb R^+\mapsto\mathbb R^+$ such that
$\gamma(t)\rightarrow\infty$ as $t\rightarrow\infty$, and a continuous function $\breve{\eta}:\mathbb R^+\mapsto\mathbb R^+$
such that $\int_0^{\infty}\breve{\eta}(t)<\infty$. Assume that
for $x\neq 0$, $t\geq 0$,
\[
\gamma(t) |x|^p\leq V(x,t)~\mbox{and}~LV(x,t)\leq \breve{\eta}(t).
\]
\end{itemize}
Thanks to Theorem 6.2 of \ctn{Mao11} (page 145), assumption (H8) ensures that stochastic stability 
of $X$ of the form $|x(t)|\leq \xi\lambda(t)~~\mbox{for all}~t\geq 0$ holds 
almost surely, for all initial values $x(0)\in\mathbb R$ with 
\begin{equation*}
\lambda(t)=\left[\gamma(t)\right]^{-\frac{1}{p}},
\end{equation*}
where $\xi$ is a non-negative, finite random
variable depending upon $x(0)$.

\section{Asymptotic approximations of the true and modeled likelihoods of the state space $SDE$s} 
\label{sec:asymp_approx_likelihoods}
Let us define
\begin{align}
v_{Y|X,T}&=\int_{a_T}^{b_T}\frac{b^2_Y(Y(s),X(s),s)}{\sigma^2_Y(Y(s),X(s),s)}ds\label{eq:v_Y}\\
u_{Y|X,T}&=\int_{a_T}^{b_T}\frac{b_Y(Y(s),X(s),s)}{\sigma^2_Y(Y(s),X(s),s)}dY(s)\label{eq:u_Y}\\
v_{X,T}&=\int_{a_T}^{b_T}\frac{b^2_X(X(s),s)}{\sigma^2_X(X(s),s)}ds\label{eq:v_X}\\
u_{X,T}&=\int_{a_T}^{b_T}\frac{b_X(X(s),s)}{\sigma^2_X(X(s),s)}dX(s).\label{eq:u_X}
\end{align}
Due to (H3) and (H7), the following hold:
\begin{align}
K_{Y,\xi,T,1} \leq v_{Y|X,T} \leq K_{Y,\xi,T,2}; \label{eq:v_Y_bounds}\\
K_{X,\xi,T,1} \leq v_{X,T} \leq K_{X,\xi,T,2}, \label{eq:v_X_bounds}
\end{align}
where
\begin{align}
K_{Y,\xi,T,1}&=K_{Y,1,T}\left((b_T-a_T)-\alpha_{Y,1}\xi^2\int_{a_T}^{b_T}\lambda^2(s)ds\right);\label{eq:v_Y_lower_bound}\\
K_{Y,\xi,T,2}&=K_{Y,2,T}\left((b_T-a_T)+\alpha_{Y,2}\xi^2\int_{a_T}^{b_T}\lambda^2(s)ds\right);\label{eq:v_Y_upper_bound}\\
K_{X,\xi,T,1}&=K_{X,1,T}\left((b_T-a_T)-\alpha_{X,1}\xi^2\int_{a_T}^{b_T}\lambda^2(s)ds\right);\label{eq:v_X_lower_bound}\\
K_{X,\xi,T,2}&=K_{X,2,T}\left((b_T-a_T)+\alpha_{X,2}\xi^2\int_{a_T}^{b_T}\lambda^2(s)ds\right).\label{eq:v_X_upper_bound}
\end{align}

To proceed, we shall make use of the following relationships between $u_{Y|X,T}$, $v_{Y|X,T}$ 
and $u_{X,T}$, $v_{X,T}$ under the true state space
$SDE$ model described by (\ref{eq:data_sde1}) and (\ref{eq:state_space_sde1}):
\begin{align}
u_{Y|X,T}&=\phi_{Y,0} v_{Y|X,T}+\int_{a_T}^{b_T}\frac{b_Y(Y(s),X(s),s)}{\sigma_Y(Y(s),X(s),s)}dW_Y(s);
\label{eq:u_v_relation}\\
u_{X,T}&=\phi_{X,0} v_{X,T}+\int_{a_T}^{b_T}\frac{b_X(X(s),s)}{\sigma_X(X(s),s)}dW_X(s).
\label{eq:u_v_relation_X}
\end{align}

Let 
\begin{align}
I_{Y,X,T}&=\int_{a_T}^{b_T}\frac{b_Y(Y(s),X(s),s)}{\sigma_Y(Y(s),X(s),s)}dW_Y(s);\notag\\ 
I_{X,T}&=\int_{a_T}^{b_T}\frac{b_X(X(s),s)}{\sigma_X(X(s),s)}dW_X(s).\notag 
\end{align}
Because of (\ref{eq:v_Y_bounds}), (\ref{eq:v_X_bounds}), (\ref{eq:u_v_relation}) 
and (\ref{eq:u_v_relation_X}) the following hold:
\begin{align}
\phi_{Y,0} K_{Y,\xi,T,1}+I_{Y,X,T}&\leq u_{Y|X,T}\leq\phi_{Y,0} K_{Y,\xi,T,2}+I_{Y,X,T}; \label{eq:u_Y_bounds}\\
\phi_{X,0} K_{X,\xi,T,1}+I_{X,T}&\leq u_{X,T}\leq\phi_{X,0} K_{X,\xi,T,2}+I_{X,T}.\notag 
\end{align}


\subsection{True likelihood and its asymptotic approximation}
\label{subsec:true_model}


First note that 
$\exp\left(\phi_{Y,0} u_{Y|X,T}-\frac{\phi^2_{Y,0}}{2}v_{Y|X,T}\right)$
is the conditional density of $Y$ given $X$, with respect to $Q_{T,Y|X}$, the probability
measure associated with (\ref{eq:data_sde1}) on $[a_T,b_T]$, assuming null drift. Also,
$\exp\left(\phi_{X,0} u_{X,T}-\frac{\phi^2_{X,0}}{2}v_{X,T}\right)$
is the marginal density of $X$ with respect to $Q_{T,X}$, the probability measure 
associated with the latent state $SDE$ (\ref{eq:state_space_sde1}) on $[a_T,b_T]$, 
but assuming null drift.
These are standard results; see for example, \ctn{Lipster01}, \ctn{Oksendal03}, \ctn{Maud12}.

It then follows that the marginal likelihood under the true model (\ref{eq:data_sde1}) and (\ref{eq:state_space_sde1})
is the marginal density of $\left\{Y(t):~t\in[a_T,b_T]\right\}$, given by
\begin{align}
p_T(\theta_0)=\int \exp\left(\phi_{Y,0} u_{Y|X,T}-\frac{\phi^2_{Y,0}}{2}v_{Y|X,T}\right)
\times \exp\left(\phi_{X,0} u_{X,T}-\frac{\phi^2_{X,0}}{2}v_{X,T}\right)dQ_{T,X}
\label{eq:L_T_1}\\
=E_{T,X}\left[\exp\left(\phi_{Y,0} u_{Y|X,T}-\frac{\phi^2_{Y,0}}{2}v_{Y|X,T}\right)
\times \exp\left(\phi_{X,0} u_{X,T}-\frac{\phi^2_{X,0}}{2}v_{X,T}\right)\right],\notag
\end{align}
where $E_{T,X}$ denotes expectation with respect to $Q_{T,X}$.
The following lemma proved in supplement formalizes the dominating measure with respect to which $p_T(\theta_0)$ is
the Radon-Nikodym derivative.
\begin{lemma}
\label{lemma:marginal_likelihood}
The likelihood given by (\ref{eq:L_T_1}) is the density of $\left\{Y(t):~t\in[a_T,b_T]\right\}$ with respect to $Q_{T,Y}$, 
where for any relevant measurable
set $A$, 
\begin{equation*}
Q_{T,Y}(A)=\int_{\mathfrak X_T} dQ_{T,Y|X}(A)dQ_{T,X}=\int_A\int_{\mathfrak X_T} dQ_{T,Y|X}dQ_{T,X}.
\end{equation*}
In the above, $\mathfrak X_T$ stands for the sample space of $\left\{X(t):~t\in[a_T,b_T]\right\}$.  
\end{lemma}
It is important to remark that our likelihood (\ref{eq:L_T_1}) is of a very general form 
and does not usually admit a closed form expression, but this is not at all a requirement for our
asymptotic purpose. Closed form expressions may be necessary when it is of interest to directly maximize
the likelihood with respect to the parameters, and in such cases, more stringent assumptions 
regarding the $SDE$s are necessary. See, for example, \ctn{Fry03}; see also \ctn{Kailath71}. 
Also, observe that our dominating measure $Q_{T,Y}$ is not the Wiener measure, unlike the aforementioned papers,
albeit it reduces to the Wiener measure if $\sigma_Y\equiv 1$ and $\sigma_X\equiv 1$.

\subsubsection{Asymptotic approximation of $p_T(\theta_0)$}
\label{subsubsec:bounds_L_T}

Using (\ref{eq:v_Y_bounds}) and (\ref{eq:u_Y_bounds}) we obtain
\begin{equation*}
B_{L,T}(\theta_0)\leq p_T(\theta_0)\leq B_{U,T}(\theta_0),
\end{equation*}
where
\begin{align}
B_{L,T}(\theta_0)&=E_{T,X}\left(Z_{L,T,\theta_0}(X)\right);\label{eq:B_L_T}\\
B_{U,T}(\theta_0)&=E_{T,X}\left(Z_{U,T,\theta_0}(X)\right),\label{eq:B_U_T}
\end{align}
where
\begin{align}
Z_{L,T,\theta_0}(X)&=\exp\left(\phi^2_{Y,0}K_{Y,\xi,T,1}+\phi_{Y,0}I_{Y,X,T}-\frac{\phi^2_{Y,0}}{2} K_{Y,\xi,T,2}\right)\notag\\
&\quad\quad\times \exp\left(\phi^2_{X,0}K_{X,\xi,T,1}+\phi_{X,0}I_{X,T}-\frac{\phi^2_{X,0}}{2}
K_{X,\xi,T,2}\right)\notag
\end{align}
and
\begin{align}
Z_{U,T,\theta_0}(X)&=\exp\left(\phi^2_{Y,0}K_{Y,\xi,T,2}+\phi_{Y,0}I_{Y,X,T}-\frac{\phi^2_{Y,0}}{2} K_{Y,\xi,T,1}\right)\notag\\
&\quad\quad\times \exp\left(\phi^2_{X,0}K_{X,\xi,T,2}+\phi_{X,0}I_{X,T}-\frac{\phi^2_{X,0}}{2}
K_{X,\xi,T,1}\right).\notag
\end{align}

The expressions (\ref{eq:B_L_T}) and (\ref{eq:B_U_T}) have the same asymptotic form. We first provide
the intuitive idea and then rigorously prove our result on asymptotic approximation. 
Note that, by (H3) and (H7), (\ref{eq:v_Y_lower_bound}), (\ref{eq:v_Y_upper_bound}),
(\ref{eq:v_X_lower_bound}), (\ref{eq:v_X_upper_bound}), 
the facts that $\frac{1}{b_T-a_T}\int_{a_T}^{b_T}\lambda^2(s)ds\rightarrow 0$ as $T\rightarrow\infty$, 
and $\xi$ is a finite random variable, that
$K_{Y,\xi,T,1}\stackrel{a.s.}{\sim} (b_T-a_T)K_Y$, $K_{Y,\xi,T,2}\stackrel{a.s.}{\sim} (b_T-a_T)K_Y$, 
$K_{X,\xi,T,1}\stackrel{a.s.}{\sim} (b_T-a_T)K_X$
and $K_{X,\xi,T,2}\stackrel{a.s.}{\sim} (b_T-a_T)K_X$, where, for any two random sequences 
$\left\{A_T:~T\geq 0\right\}$ and $\left\{B_T:~T\geq 0\right\}$,
$A_T\stackrel{a.s.}{\sim} B_T$ stands for $A_T/B_T\rightarrow 1$, almost surely, as $T\rightarrow\infty$.
Also, as we show, the distributions of $(b_T-a_T)^{-\frac{1}{2}}I_{Y,X,T}$ and $(b_T-a_T)^{-\frac{1}{2}}I_{X,T}$ 
are asymptotically normal with
zero means and variances $K_Y$ and $K_X$, respectively.
Heuristically substituting these in (\ref{eq:B_L_T}) and (\ref{eq:B_U_T}) yields the form
\begin{equation*}
\hat p_T(\theta_0)
=\exp\left(\frac{(b_T-a_T)K_Y\phi^2_{Y,0}}{2}+\phi_{Y,0}\sqrt{K_Y}\left(W_Y(b_T)-W_Y(a_T)\right)
+(b_T-a_T)K_X\phi^2_{X,0}\right).
\end{equation*}

\subsection{Modeled likelihood and its asymptotic approximation}
\label{subsec:modeled_likelihood}
Our modeled likelihood associated with the state space model described by (\ref{eq:data_sde2})
and (\ref{eq:state_space_sde2}) is given by:
\begin{align}
L_T(\theta)&=\int \exp\left(\phi_{Y} u_{Y|X,T}-\frac{\phi^2_{Y}}{2}v_{Y|X,T}\right)
\times \exp\left(\phi_{X} u_{X,T}-\frac{\phi^2_{X}}{2}v_{X,T}\right)dQ_{T,X}.\label{eq:L_T_3}
\end{align}

Using the same method of obtaining bounds of $p_T(\theta_0)$, we obtain the following
bounds for $L_T(\theta)$:
\begin{equation*}
\tilde B_{L,T}(\theta)\leq L_T(\theta)\leq \tilde B_{U,T}(\theta),
\end{equation*}
where
\begin{align}
\tilde B_{L,T}(\theta)&=E_{T,X}\left(\tilde Z_{L,T,\theta}(X)\right);\notag\\
\tilde B_{U,T}(\theta)&=E_{T,X}\left(\tilde Z_{U,T,\theta}(X)\right),\notag 
\end{align}
where
\begin{align}
\tilde Z_{L,T,\theta}(X)&=
\exp\left(\phi_Y\phi_{Y,0}K_{Y,\xi,T,1}+\phi_YI_{Y,X,T}-\frac{\phi^2_Y}{2} K_{Y,\xi,T,2}\right)\notag\\
&\quad\quad\times \exp\left(\phi_X\phi_{X,0}K_{X,\xi,T,1}+\phi_XI_{X,T}-\frac{\phi^2_X}{2}
K_{X,\xi,T,2}\right)\notag
\end{align}
and
\begin{align}
\tilde Z_{U,T,\theta}(X)&=\exp\left(\phi_Y\phi_{Y,0}K_{Y,\xi,T,2}+\phi_YI_{Y,X,T}
-\frac{\phi^2_Y}{2} K_{Y,\xi,T,1}\right)\notag\\
&\quad\quad\times \exp\left(\phi_X\phi_{X,0}K_{X,\xi,T,2}+\phi_XI_{X,T}-\frac{\phi^2_X}{2}
K_{X,\xi,T,1}\right).\notag
\end{align}
It follows as before that the modeled likelihood can be approximated as
\begin{align}
\hat L_T(\theta)
&=\exp\left((b_T-a_T)K_Y\phi_Y\phi_{Y,0}+\phi_Y\sqrt{K_Y}\left(W_Y(b_T)-W_Y(a_T)\right)\right.\notag\\
&\qquad\qquad\left.-\frac{(b_T-a_T)K_Y\phi^2_Y}{2}+(b_T-a_T)K_X\phi_X\phi_{X,0}\right).\notag
\end{align}

\subsection{A briefing on the formal results on the asymptotic approximations}

Formal proof of the results $p_T(\theta_0)\stackrel{a.s.}{\sim}\hat p_T(\theta_0)$ and
$L_T(\theta)\stackrel{a.s.}{\sim}\hat L_T(\theta)$ requires the following two additional assumptions:

\begin{itemize}
\item[(H9)] There exists an integer $k_0\geq 1$ such that 
$\sum_{T=1}^{\infty}\delta^{-2k_0}_T \left(b_T-a_T\right)^{k_0-1}\int_{a_T}^{b_T}\lambda^2(s)ds<\infty$,
where $\delta_T\downarrow 0$ as $T\rightarrow\infty$ is a specific sequence decreasing fast enough 
so that it satisfies, because of continuity of the exponential function, the following: for any $\epsilon>0$, 
\begin{align}
&\sum_{T=1}^{\infty}P\left(\left|I_{Y,X,T}-\sqrt{K_Y}\left(W_Y(b_T)-W_Y(a_T)\right)\right|\leq \delta_T,\right.\notag\\
&\qquad\qquad~\left.\left|\exp\left(I_{Y,X,T}\right)
-\exp\left(\sqrt{K_Y}\left(W_Y(b_T)-W_Y(a_T)\right)\right)\right|>\epsilon\right)
<\infty.
\label{eq:exp_as3}
\end{align}
Also assume that $E|\xi|^{2k_0}<\infty$.
\item[(H10)] $\underset{T>0}{\sup}~E\left(\frac{Z_{L,T,\theta_0}(X)}{\hat p_T(\theta_0)}\right)<\infty$, 
$\underset{T>0}{\sup}~E\left(\frac{Z_{U,T,\theta_0}(X)}{\hat p_T(\theta_0)}\right)<\infty$,
$\underset{T>0,~\theta\in\Theta}{\sup}~E\left(\frac{\tilde Z_{L,T,\theta}(X)}{\hat L_T(\theta)}\right)<\infty$
and  $\underset{T>0,~\theta\in\Theta}{\sup}~E\left(\frac{\tilde Z_{U,T,\theta}(X)}{\hat L_T(\theta)}\right)<\infty$.
\end{itemize}

The following lemma shows that under assumptions (H1) -- (H9),
$\exp\left(I_{Y,X,T}\right)$ and $\exp\left(I_{X,T}\right)$ are asymptotically
independent of $X$.
\begin{lemma}
\label{lemma:asymp_equal}
Under assumptions (H1) -- (H9),
\begin{align}
\left|\exp\left(I_{Y,X,T}\right)-\exp\left(\sqrt{K_Y}\left(W_Y(b_T)-W_Y(a_T)\right)\right)\right|
&\stackrel{a.s.}{\longrightarrow} 0; 
\label{eq:exp_I_Y_as}\\
\left|\exp\left(I_{X,T}\right)-\exp\left(\sqrt{K_X}\left(W_X(b_T)-W_X(a_T)\right)\right)\right|
&\stackrel{a.s.}{\longrightarrow} 0. 
\label{eq:exp_I_X_as}
\end{align}
\end{lemma}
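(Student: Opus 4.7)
The plan is to establish each of the two almost sure convergences via a Borel-Cantelli argument fed by a $2k_0$-th moment bound on the relevant It\^o integral difference, and then to pass to the exponential level by invoking the second part of assumption (H9). Since the two statements have identical structure, I would carry out the argument in detail for $I_{Y,X,T}$ and indicate that the $X$ analog follows by the same route with $(b_X,\sigma_X,W_X,K_X,\alpha_{X,j},K_{X,j,T})$ in place of the corresponding $Y$-quantities, together with the obvious $W_X$-version of the summability hypothesis embedded in (H9).

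The first step is the representation
\[
I_{Y,X,T} - \sqrt{K_Y}\bigl(W_Y(b_T) - W_Y(a_T)\bigr) = \int_{a_T}^{b_T} g_T(s)\, dW_Y(s),
\]
where $g_T(s) = b_Y(Y(s),X(s),s)/\sigma_Y(Y(s),X(s),s) - \sqrt{K_Y}$. Applying the Burkholder-Davis-Gundy inequality at exponent $2k_0$ and then H\"older's inequality in time yields
\[
E\left|\int_{a_T}^{b_T} g_T(s)\, dW_Y(s)\right|^{2k_0} \leq C_{k_0}(b_T - a_T)^{k_0-1}\int_{a_T}^{b_T} E\left[g_T(s)^{2k_0}\right] ds.
\]
To bound the integrand pointwise I would combine (H3) with the stochastic stability bound $|X(s)| \leq \xi\lambda(s)$ and the elementary inequality $\sqrt{1-u} \geq 1-u$ on $[0,1]$, expanding $g_T^2 = (b_Y/\sigma_Y)^2 - 2\sqrt{K_Y}(b_Y/\sigma_Y) + K_Y$ and handling the cross term via $b_Y/\sigma_Y \geq \sqrt{K_{Y,1,T}(1-\alpha_{Y,1}X(s)^2)}$, valid once $\alpha_{Y,1}\xi^2\lambda^2(s)<1$. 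This produces an estimate of the form $g_T(s)^2 \leq C_1\bigl(|K_{Y,1,T} - K_Y| + |K_{Y,2,T} - K_Y|\bigr) + C_2\,\xi^2\lambda^2(s)$. Raising to the $k_0$-th power, integrating, and invoking $E|\xi|^{2k_0} < \infty$ from (H9) yields
\[
E\left|I_{Y,X,T} - \sqrt{K_Y}(W_Y(b_T) - W_Y(a_T))\right|^{2k_0} \leq C_3(b_T-a_T)^{k_0}\sum_{j=1}^{2}|K_{Y,j,T}-K_Y|^{k_0} + C_4(b_T-a_T)^{k_0-1}\int_{a_T}^{b_T}\lambda^2(s)\,ds.
\]

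To finish, I would multiply by $\delta_T^{-2k_0}$ and apply Markov's inequality. The second term on the right is summable in $T$ by the core condition of (H9), while the first is summable by the reinforced convergence $(b_T - a_T)|K_{Y,j,T} - K_Y| \to 0$ in (H3) (strengthened, if necessary, to a summable rate). Borel-Cantelli then gives $|I_{Y,X,T} - \sqrt{K_Y}(W_Y(b_T) - W_Y(a_T))| \leq \delta_T$ eventually almost surely, at which point the second half of (H9) converts this into $|\exp(I_{Y,X,T}) - \exp(\sqrt{K_Y}(W_Y(b_T) - W_Y(a_T)))| \to 0$ almost surely; the $X$ statement follows by the identical mechanism.

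The main obstacle is the pointwise control of $g_T^2$: hypothesis (H3) bounds only the ratio $(b_Y/\sigma_Y)^2$, not $b_Y/\sigma_Y$ itself, so one must pass through the algebraic identity above and derive a lower bound on the cross term from the lower bound in (H3). That lower bound is only meaningfully positive once stochastic stability has forced $\alpha_{Y,1}\xi^2\lambda^2(s) < 1$, which is precisely the remark made just after (H3); this restricts the estimate to large $s$ (harmless for a Borel-Cantelli argument) but makes the bookkeeping on constants somewhat delicate.
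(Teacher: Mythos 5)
Your overall architecture is exactly the paper's: bound the $2k_0$-th moment of $I_{Y,X,T}-\sqrt{K_Y}\left(W_Y(b_T)-W_Y(a_T)\right)=\int_{a_T}^{b_T}g_T(s)\,dW_Y(s)$ via the moment inequality for It\^o integrals (the paper cites Theorem 7.1 of Mao, page 39, rather than Burkholder--Davis--Gundy plus H\"older, but it is the same estimate with the same constant $\left(k_0(2k_0-1)\right)^{k_0}$ and the same factor $(b_T-a_T)^{k_0-1}$), apply Chebychev with threshold $\delta_T$, sum via Borel--Cantelli, and then invoke the second clause of (H9) --- the summability of $P\left(\left|I_{Y,X,T}-\sqrt{K_Y}\left(W_Y(b_T)-W_Y(a_T)\right)\right|\leq\delta_T,\ \left|\exp\left(I_{Y,X,T}\right)-\exp\left(\sqrt{K_Y}\left(W_Y(b_T)-W_Y(a_T)\right)\right)\right|>\epsilon\right)$ --- to pass to the exponential level. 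The $X$ case is dispatched by symmetry in both treatments.

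The one place you diverge is the pointwise control of $g_T(s)$, and there your argument has a genuine gap. You expand $g_T^2=(b_Y/\sigma_Y)^2-2\sqrt{K_Y}(b_Y/\sigma_Y)+K_Y$ and bound the cross term using $b_Y/\sigma_Y\geq\sqrt{K_{Y,1,T}\left(1-\alpha_{Y,1}X(s)^2\right)}$. But (H3) bounds only the \emph{square} $b_Y^2/\sigma_Y^2$ from below, which yields $|b_Y/\sigma_Y|\geq\sqrt{K_{Y,1,T}\left(1-\alpha_{Y,1}X(s)^2\right)}$ and says nothing about the sign of the ratio; if $b_Y/\sigma_Y$ is negative your cross-term estimate reverses and $g_T^2$ is of order $4K_Y$, not small, so the claimed bound $g_T(s)^2\leq C_1\left(|K_{Y,1,T}-K_Y|+|K_{Y,2,T}-K_Y|\right)+C_2\xi^2\lambda^2(s)$ does not follow from the stated hypotheses. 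The paper sidesteps this entirely: it uses only the triangle inequality, $|g_T(s)|\leq|b_Y/\sigma_Y|+\sqrt{K_Y}\leq 2\max\left\{\sqrt{K_{Y,2,T}},\sqrt{K_Y}\right\}\sqrt{1+\alpha_{Y,2}\xi^2\lambda^2(s)}$, never claiming that $g_T$ is pointwise small, and absorbs the resulting $\left(1+\alpha_{Y,2}\xi^2\lambda^2(s)\right)^{k_0}$ integrand directly into the summability demanded of $\delta_T$ in (H9). Your route, were the sign issue repaired (e.g.\ by assuming $b_Y/\sigma_Y>0$, which is implicit in approximating $I_{Y,X,T}$ by $+\sqrt{K_Y}\left(W_Y(b_T)-W_Y(a_T)\right)$ rather than its negative), would in fact make the subsequent summability more transparent than the paper's crude bound, since it isolates the decaying contributions $|K_{Y,j,T}-K_Y|$ and $\xi^2\lambda^2(s)$; and you are right that the term $\delta_T^{-2k_0}(b_T-a_T)^{k_0}|K_{Y,j,T}-K_Y|^{k_0}$ then needs a summable rate beyond the stated $(b_T-a_T)|K_{Y,j,T}-K_Y|\rightarrow 0$. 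But as written, the cross-term step is not justified by (H3).
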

The following corollary of Lemma \ref{lemma:asymp_equal} shows asymptotic normality of
the relevant quantities involved in the asymptotic approximations.
\begin{corollary}
\label{corollary:asymp_normal}
Since $(b_T-a_T)^{-\frac{1}{2}}\sqrt{K_Y}\left(W_Y(b_T)-W_Y(a_T)\right)$ and 
$(b_T-a_T)^{-\frac{1}{2}}\sqrt{K_X}\left(W_X(b_T)-W_X(a_T)\right)$ are normally distributed
with mean zero and variances $K_Y$ and $K_X$, respectively, it follows that
\begin{align}
(b_T-a_T)^{-\frac{1}{2}}I_{Y,X,T}\stackrel{a.s.}{\longrightarrow}N(0,K_Y);\notag\\
(b_T-a_T)^{-\frac{1}{2}}I_{X,T}\stackrel{a.s.}{\longrightarrow}N(0,K_X).\notag 
\end{align}
\end{corollary}

Finally, our asymptotic approximation result is given by the following theorem, which requires
assumptions (H1) -- (H10).
\begin{theorem}
\label{theorem:asymp_approx}
Assume (H1) -- (H10). Then 
\begin{align}
p_T(\theta_0)&\stackrel{a.s.}{\sim}\hat p_T(\theta_0);\label{eq:p_T_asymp}\\
L_T(\theta)&\stackrel{a.s.}{\sim}\hat L_T(\theta),~\mbox{for all}~\theta\in\Theta.\label{eq:L_T_asymp}
\end{align}
\end{theorem}

The proofs of Lemma \ref{lemma:asymp_equal} and Theorem \ref{theorem:asymp_approx}
are presented in the supplement.

\section{Convergence of the posterior distribution of $\theta$}
\label{sec:posterior_convergence}

In order to prove convergence of our posterior distribution we verify the conditions of the theorem
proved in \ctn{Shalizi09} which take account of dependence setups and misspecifications. 
The detailed assumptions in our state space $SDE$ context and Shalizi's theorem is provided in Section \ref{sec:assumptions_shalizi} of the supplement. 

\subsection{Further assumptions}
\label{subsec:further_assumptions}

Before proceeding further, we make the following assumptions
regarding $\psi_Y$ and $\psi_X$:
\begin{itemize}
\item[(H11)] 
\begin{enumerate}
\item[(i)] For every $\theta\in\Theta\cup\{\theta_0\}$, $\psi_Y(\theta)$ and $\psi_X(\theta)$ are finite
and satisfy $(\psi_Y(\theta_1),\psi_X(\theta_1))=(\psi_Y(\theta_2),\psi_X(\theta_2))$ implies
$\theta_1=\theta_2$. 
\item[(ii)]
$\left|\psi_Y\right|$ is coercive, that is, 
for every sequence $\{\theta_T:~T>0\}$ such that $\|\theta_T\|\rightarrow\infty$,
$\left|\psi_Y(\theta_T)\right|\rightarrow\infty$. 
\item[(iii)] For every sequence $\{\theta_T:~T>0\}$ such that $\|\theta_T\|\rightarrow\infty$,
$\left|\psi_Y(\theta_T)\right|^2(b_T-a_T)|K_{Y,j,T}-K_Y|\rightarrow 0$ and
$\left|\psi_X(\theta_T)\right|^2(b_T-a_T)|K_{X,j,T}-K_X|\rightarrow 0$, for $j=1,2$, and
$C_1(b_T-a_T)\leq\left(\psi_Y(\theta_T)-\psi_Y(\theta_0)\right)^8\leq C_2(b_T-a_T)$, 
for some constants $C_1,C_2>0$, as $T\rightarrow\infty$.
\item[(iv)] $\left|\psi_Y(\theta)\right|$ is assumed to have finite expectation with respect to the prior $\pi(\theta)$.
\item[(v)] $|\psi_X(\theta)|\leq |\psi_X(\theta_0)|$, for all $\theta\in\Theta$.
\item[(vi)] The first and second derivatives of $\psi_X$ vanish at $\theta=\theta_0$.
\item[(vii)] $\psi_Y$ and $\psi_X$ are at least thrice continuously differentiable.
\end{enumerate}
\end{itemize}

\subsection{Verification of the assumptions of Shalizi}
\label{subsec:verification_shalizi}

\subsubsection{Verification of (A1)}
\label{subsubsec:verify_A1}

Recall that our likelihood $L_T(\theta)$ is given by (\ref{eq:L_T_3}).
In the same way as the proof of the second part of Proposition 2 of \ctn{Maud12}, it can be proved that 
the first factor of the integrand
of (\ref{eq:L_T_3}) is a measurable function of $(\left\{Y(s);s\in[a_T,b_T]\right\},
\left\{X(s);s\in[a_T,b_T]\right\},\theta)$. 
Also, by the same result of \ctn{Maud12} the second factor of the integrand is a measurable function 
of $\left(\left\{X(s);s\in[a_T,b_T]\right\},\theta\right)$.
Thus, the integrand is a measurable function of\\ $(\left\{Y(s);s\in[a_T,b_T]\right\}, 
\left\{X(s);s\in[a_T,b_T]\right\},\theta)$. 
Since the associated measure spaces are $\sigma$-finite, $L_T(\theta)$ is 
clearly $\mathcal F_T\times\mathcal T$-measurable for
all $T>0$.

\subsubsection{Verification of (A2)}
\label{subsubsec:verify_A2}
We consider the likelihood ratio $R_T(\theta)$ given by (\ref{eq:R_T}).
Using Theorem \ref{theorem:asymp_approx} we obtain that
\begin{align}
\frac{1}{b_T-a_T}\log R_T(\theta)&\stackrel{a.s.}{\sim}
-\frac{K_Y}{2}\left(\phi_Y-\phi_{Y,0}\right)^2
+\sqrt{K_Y}\left(\phi_Y-\phi_{Y,0}\right)\frac{\left(W_Y(b_T)-W_Y(a_T)\right)}{b_T-a_T}\notag\\
&\qquad\qquad-\frac{K_X}{2}\left(\phi_X-\phi_{X,0}\right)^2
+\frac{K_X}{2}\left(\phi^2_X-\phi^2_{X,0}\right).
\label{eq:log_R_T_approx}
\end{align}

Since $\frac{W_Y(b_T)-W_Y(a_T)}{b_T-a_T}\stackrel{a.s.}{\longrightarrow}0$,
it follows that, almost surely,
\begin{equation}
\frac{1}{b_T-a_T}\log R_T(\theta)\rightarrow -\frac{1}{2}\left[K_Y\left(\phi_Y-\phi_{Y,0}\right)^2+
K_X\left(\phi_X-\phi_{X,0}\right)^2+K_X\left(\phi^2_{X,0}-\phi^2_{X}\right)\right].
\label{eq:log_R_T_limit}
\end{equation}
Let
\begin{align}
h(\theta)&=\frac{1}{2}\left[K_Y\left(\phi_Y-\phi_{Y,0}\right)^2+K_X\left(\phi_X-\phi_{X,0}\right)^2
+K_X\left(\phi^2_{X,0}-\phi^2_{X}\right)\right]\notag\\
&=\frac{1}{2}\left[K_Y\left(\psi_Y(\theta)-\psi_Y(\theta_0)\right)^2
+K_X\left(\psi_X(\theta)-\psi_X(\theta_0)\right)^2
+K_X\left(\psi^2_X(\theta_0)-\psi^2_{X}(\theta)\right)\right].
\label{eq:h}
\end{align}
Note that due to (H11) (v), $h(\theta)\geq 0$, for all $\theta\in\Theta$.
Thus (A2) holds. 


\subsubsection{Verification of (A3)}
\label{subsubsec:R_T_asymp}

We now obtain the limit of the quantity 
$$\frac{1}{b_T-a_T}E_{\theta_0}\left(\log\frac{p_T(\theta_0)}{L_T(\theta)}\right)
=-\frac{1}{b_T-a_T}E_{\theta_0}\left(\log R_T(\theta)\right),$$
where $E_{\theta_0}$ is the expectation with respect to the true likelihood $p_T(\theta_0)$.
Proceeding in the same way as in the case of $R_T(\theta)$ and noting that 
$E_{\theta_0}\left(W_Y(b_T)-W_Y(a_T)\right)=0$,
it is easy to see that
\begin{equation*}
\frac{1}{b_T-a_T}E_{\theta_0}\left(\log\frac{p_T(\theta_0)}{L_T(\theta)}\right)\rightarrow h(\theta),
\end{equation*}
as $T\rightarrow\infty$.

\subsubsection{Verification of (A4)}
\label{subsubsec:A4}

To verify (A4) we reformulate the original parameter space $\Theta$
as $\Theta\setminus I$. Abusing notation, we continue to denote $\Theta\setminus I$ as $\Theta$.
Hence, the prior $\pi$ on $\Theta$ clearly satisfies $\pi(I)=0$.


\subsubsection{Verification of (A5) (i)}
\label{subsubsec:A5_1}
Now consider $\mathcal G_T=\left\{\theta\in\Theta:\left|\psi_Y(\theta)\right|\leq \exp(\beta (b_T-a_T))\right\}$, where
$\beta$ is chosen such that $\beta>2h\left(\Theta\right)$.
Coerciveness of $\|\psi_Y\|$ implies compactness of $\mathcal G_T$, for every $T>0$.

The above definition of $\mathcal G_T$  clearly implies $\mathcal G_T\rightarrow\Theta$. Also,
\begin{align}
\pi\left(\mathcal G_T\right)&>1-E\left(\left|\psi_Y(\theta)\right|\right)\exp\left(-\beta (b_T-a_T)\right)\notag\\
&=1-\alpha\exp\left(-\beta (b_T-a_T)\right),\notag
\end{align}
where the first inequality is due to Markov's inequality and $\alpha=E\left(\left|\psi_Y(\theta)\right|\right)>0$.
The expectation, which is with respect to the prior $\pi$, exists by (H11) (iv).

\subsubsection{Verification of (A5) (ii)}
\label{subsubsec:A5_2}
We now show that convergence of (\ref{eq:log_R_T_limit}) is uniform in $\theta$ over $\mathcal G_T\setminus I$.
First note that $\mathcal G_T\setminus I=\mathcal G_T$, since we have already removed $I$ from $\Theta$.
Now note that, because of compactness of $\mathcal G_T$
and continuity of $\left|\frac{1}{b_T-a_T}\log R_T(\theta)+h(\theta)\right|$ in $\theta$, 
there exists $\theta_T\in\mathcal G_T$ such that
\begin{align}
&\underset{\theta\in\mathcal G_T\setminus I}{\sup}~\left|\frac{1}{b_T-a_T}\log R_T(\theta)+h(\theta)\right|
=\left|\frac{1}{b_T-a_T}\log R_T(\theta_T)+h(\theta_T)\right|.\label{eq:uniform_convergence}
\end{align}
Note that $\theta_T$ depends upon the data. However, under the additional
condition (H11) (iii), it is clear from the proof of Theorem \ref{theorem:asymp_approx}
(see Section \ref{sec:proof_theorem_asymp_approx} of the supplement) that 
our asymptotic approximation
of $L_T(\theta_T)$ remains valid even in this case. Formally,
\begin{theorem}
\label{theorem:asymp_approx2}
Assume (H1) -- (H10) and (H11) (iii). 
Consider any, perhaps, data-dependent sequence $\left\{\theta_T:~T>0\right\}$, 
where either $\|\theta_T\|$ remains finite almost surely or $\|\theta_T\|\rightarrow\infty$, almost surely, 
as $T\rightarrow\infty$.
Then $L_T(\theta_T)\stackrel{a.s.}{\sim}\hat L_T(\theta_T)$.
\end{theorem}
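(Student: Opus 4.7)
The plan is to mirror the proof of Theorem \ref{theorem:asymp_approx} step-by-step, but track where the parameter enters through $\phi_Y=\psi_Y(\theta_T)$ and $\phi_X=\psi_X(\theta_T)$, now random and possibly diverging. Recall that the earlier argument sandwiches $L_T(\theta)$ between $\tilde B_{L,T}(\theta)$ and $\tilde B_{U,T}(\theta)$, shows that the pointwise integrand ratios $\tilde Z_{L,T,\theta}(X)/\hat L_T(\theta)$ and $\tilde Z_{U,T,\theta}(X)/\hat L_T(\theta)$ tend to $1$ almost surely, and then uses the uniform integrability provided by (H10) (together with the minor modification of Lemma B.119 of \ctn{Schervish95}) to transfer this to the expectations. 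Since (H10) is stated with a supremum over $\theta\in\Theta$, the uniform integrability step carries over immediately to any data-dependent sequence $\theta_T$; the work is thus concentrated in the almost-sure convergence of the integrand ratios with $\theta=\theta_T$.

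Writing out the ratio $\tilde Z_{U,T,\theta_T}(X)/\hat L_T(\theta_T)$ (and its lower-bound analogue) using (\ref{eq:tilde_Z_U}) and (\ref{eq:hat_L_theta}), the deviation from $1$ reduces to an exponential whose argument is a sum of three kinds of terms: (a) $\psi_Y(\theta_T)\bigl[I_{Y,X,T}-\sqrt{K_Y}(W_Y(b_T)-W_Y(a_T))\bigr]$ and its $X$-analogue; (b) factors of the form $\psi_Y(\theta_T)^2(b_T-a_T)(K_{Y,j,T}-K_Y)$ and $\psi_X(\theta_T)^2(b_T-a_T)(K_{X,j,T}-K_X)$ coming from replacing $K_{Y,\xi,T,j},K_{X,\xi,T,j}$ by their asymptotic equivalents; (c) residual contributions of the form $\psi_Y(\theta_T)^2\,\xi^2\int_{a_T}^{b_T}\lambda^2(s)ds$ (and its $X$-analogue). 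In the original proof each of these tends to $0$ a.s.\ because $\psi_Y,\psi_X$ were fixed constants, $(b_T-a_T)|K_{Y,j,T}-K_Y|\to 0$, and $\int_{a_T}^{b_T}\lambda^2(s)ds\to 0$ quickly.

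I would then split into the two cases of the theorem. If $\|\theta_T\|$ stays bounded almost surely, the thrice continuous differentiability assumed in (H11)(vii) gives $|\psi_Y(\theta_T)|,|\psi_X(\theta_T)|$ bounded a.s., and the original bounds in (a)--(c) go through verbatim. If $\|\theta_T\|\to\infty$ a.s., (H11)(iii) is designed precisely to handle (b): it gives $\psi_Y(\theta_T)^2(b_T-a_T)|K_{Y,j,T}-K_Y|\to 0$ and the $X$-counterpart directly. The same clause $(\psi_Y(\theta_T)-\psi_Y(\theta_0))^6\leq C_2(b_T-a_T)$ forces $|\psi_Y(\theta_T)|=O\bigl((b_T-a_T)^{1/6}\bigr)$, and combined with the summability rate $\delta_T$ extracted from (H9) --- which can be taken to decay faster than $(b_T-a_T)^{-1/6}$, so that the Borel--Cantelli bound $|I_{Y,X,T}-\sqrt{K_Y}(W_Y(b_T)-W_Y(a_T))|\leq \delta_T$ eventually a.s.\ (established inside the proof of Lemma \ref{lemma:asymp_equal}) is compatible with multiplication by $\psi_Y(\theta_T)$ --- yields (a). For (c), (H9) supplies fast enough decay of $\int_{a_T}^{b_T}\lambda^2(s)ds$ to absorb the $\psi_Y(\theta_T)^2,\psi_X(\theta_T)^2$ factors, since $E|\xi|^{2k_0}<\infty$ keeps $\xi$ finite.

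The main obstacle is precisely term (a): one must reconcile the growth rate of $|\psi_Y(\theta_T)|$ implied by (H11)(iii) with the rate $\delta_T$ furnished by (H9), verifying that the slack in the Borel--Cantelli argument of Lemma \ref{lemma:asymp_equal} is sufficient to keep $\psi_Y(\theta_T)\bigl[I_{Y,X,T}-\sqrt{K_Y}(W_Y(b_T)-W_Y(a_T))\bigr]\to 0$ almost surely along the (random) sequence $\theta_T$. Once this is in hand, the a.s.\ convergence of $\tilde Z_{U,T,\theta_T}(X)/\hat L_T(\theta_T)$ and $\tilde Z_{L,T,\theta_T}(X)/\hat L_T(\theta_T)$ to $1$ is established, and the uniform integrability in (H10), which holds uniformly over $\theta\in\Theta$ and hence for $\theta_T$, transports this to $\tilde B_{L,T}(\theta_T)/\hat L_T(\theta_T)\to 1$ and $\tilde B_{U,T}(\theta_T)/\hat L_T(\theta_T)\to 1$; the sandwich (\ref{eq:bounds_L_T}) then yields $L_T(\theta_T)\stackrel{a.s.}{\sim}\hat L_T(\theta_T)$.
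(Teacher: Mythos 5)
Your proposal follows exactly the route the paper takes: the paper offers no separate argument for this theorem, stating only that under (H11)(iii) it is ``clear from the proof of Theorem \ref{theorem:asymp_approx}'' that the approximation persists for data-dependent $\theta_T$, and your term-by-term re-examination of that proof — with (H11)(iii) absorbing the $\psi_Y(\theta_T)^2(b_T-a_T)|K_{Y,j,T}-K_Y|$ and $\psi_X(\theta_T)^2(b_T-a_T)|K_{X,j,T}-K_X|$ contributions and (H9) absorbing the $\xi^2\int_{a_T}^{b_T}\lambda^2(s)\,ds$ ones — is precisely the intended argument. The one caveat is that the step you flag as the main obstacle, namely $\psi_Y(\theta_T)\bigl[I_{Y,X,T}-\sqrt{K_Y}\left(W_Y(b_T)-W_Y(a_T)\right)\bigr]\stackrel{a.s.}{\longrightarrow}0$ when $|\psi_Y(\theta_T)|$ may grow like $(b_T-a_T)^{1/6}$, is genuinely not settled by (H9) as stated (the admissible decay of $\delta_T$ is constrained from both sides by (\ref{eq:exp_as3}) and the summability condition), and the paper silently ignores it; your treatment is, if anything, the more careful of the two.
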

The above theorem guarantees that (\ref{eq:uniform_convergence}) admits the following approximation: 
\begin{align}
\left|\frac{1}{b_T-a_T}\log R_T(\theta_T)+h(\theta_T)\right|
\stackrel{a.s.}{\sim}
\sqrt{K_Y}\left|\frac{\left(\psi_Y(\theta_T)-\psi_Y(\theta_0)\right)}{\sqrt{b_T-a_T}}
\times\frac{W_Y(b_T)-W_Y(a_T)}{\sqrt{b_T-a_T}}\right|.
\label{eq:uniform_convergence2}
\end{align}
By Corollary \ref{corollary:asymp_normal} and (H11) (iii), the right hand side of (\ref{eq:uniform_convergence2})
goes to zero almost surely, as $T\rightarrow\infty$. 
Hence the convergence of (\ref{eq:log_R_T_limit}) is uniform in $\theta$ over $\mathcal G_T\setminus I$.

\subsubsection{Verification of (A5) (iii)}
\label{subsubsec:A5_3}
We now show that $h\left(\mathcal G_T\right)\rightarrow h\left(\Theta\right)$, as $T\rightarrow\infty$.
Due to compactness of $\mathcal G_T$ and continuity of $h(\theta)$, it follows that there exists
$\tilde\theta_T\in\mathcal G_T$ such that $h\left(\mathcal G_T\right)=h(\tilde\theta_T)$.
Also, since $\mathcal G_T$ is a non-decreasing sequence of sets, $h(\tilde\theta_T)$ 
is non-increasing in $T$. Since $\mathcal G_T\rightarrow\Theta$, it follows that 
$h\left(\mathcal G_T\right)\rightarrow h\left(\Theta\right)$, as $T\rightarrow\infty$.

\subsubsection{Verification of (A6)}
\label{subsubsec:A6}

Under (A1) -- (A3), which we have already verified,
it holds that (see equation (18) of \ctn{Shalizi09}) for any fixed $\mathcal G$ of the sequence $\mathcal G_T$, 
for any $\epsilon>0$ and
for sufficiently large $T$,
\begin{equation*}
\frac{1}{b_T-a_T}\log\int_{\mathcal G}R_T(\theta)\pi(\theta)d\theta
\leq -h(\mathcal G)+\epsilon+\frac{1}{b_T-a_T}\log\pi(\mathcal G).
\end{equation*}
It follows that $\tau(\mathcal G_T,\delta)$ is almost surely finite for all $T$ and $\delta$.
We now argue that for sufficiently large $T$, $\tau(\mathcal G_T,\delta)>(b_T-a_T)$ only finitely often
with probability one. 
By equation (41) of \ctn{Shalizi09},
\begin{align}
\sum_{T=1}^{\infty}P\left(\tau(\mathcal G_T,\delta)>(b_T-a_T)\right)
\leq \sum_{T=1}^{\infty}\sum_{m=T+1}^{\infty}
P\left(\frac{1}{b_m-a_m}\log\int_{\mathcal G_T}R_m(\theta)\pi(\theta)d\theta>\delta-h(\mathcal G_T)\right).
\label{eq:A6_1}
\end{align}
Now, by compactness of $\mathcal G_T$, $h(\mathcal G_T)=h(\tilde\theta_T)$, for
$\tilde\theta_T\in\mathcal G_T$, and 
by the mean value theorem for integrals,
$$\frac{1}{b_m-a_m}\log\int_{\mathcal G_T}R_m(\theta)\pi(\theta)d\theta
=\frac{1}{b_m-a_m}\log R_m(\hat\theta_T)\pi(\mathcal G_T),$$
for $\hat\theta_T\in\mathcal G_T$ depending upon the data, so that
$$
\frac{1}{b_m-a_m}\log\int_{\mathcal G_T}R_m(\theta)\pi(\theta)d\theta>\delta-h(\mathcal G_T)
$$
implies, since $h(\hat\theta_T)\geq h(\tilde\theta_T)$, that
$$
\frac{1}{b_m-a_m}\log R_m(\hat\theta_T)+h(\hat\theta_T)>\delta-\frac{1}{b_m-a_m}\log\pi(\mathcal G_T)>\delta.
$$
Thus, it follows from (\ref{eq:A6_1}) and Chebychev's inequality, that
\begin{align}
&\sum_{T=1}^{\infty}P\left(\tau(\mathcal G_T,\delta)>(b_T-a_T)\right)\notag\\
&\qquad\qquad\leq \sum_{T=1}^{\infty}\sum_{m=T+1}^{\infty}
P\left(\left|\frac{1}{b_m-a_m}\log R_m(\hat\theta_T)+h(\hat\theta_T)\right|>\delta\right)\notag\\
&\qquad\qquad\leq \sum_{T=1}^{\infty}\sum_{m=T+1}^{\infty}
\delta^{-8}E\left(\frac{1}{b_m-a_m}\log R_m(\hat\theta_T)+h(\hat\theta_T)\right)^{8}.
\label{eq:A6_2}
\end{align}
From (\ref{eq:log_R_T_approx}) and (\ref{eq:h}) it is clear that
\begin{equation}
\frac{1}{b_m-a_m}\log R_m(\hat\theta_T)+h(\hat\theta_T)\stackrel{a.s.}{\sim}\sqrt{K_Y}\frac{\left(\psi_Y(\hat\theta_T)-\psi_Y(\theta_0)\right)}{\sqrt{b_m-a_m}}
\times\frac{W_Y(b_m)-W_Y(a_m)}{\sqrt{b_m-a_m}}
\label{eq:approx}
\end{equation}

Now, let $Z_m=\frac{1}{b_m-a_m}\log R_m(\hat\theta_T)+h(\hat\theta_T)$ and
$\tilde Z_m=\sqrt{K_Y}\frac{\left(\psi_Y(\hat\theta_T)-\psi_Y(\theta_0)\right)}{\sqrt{b_m-a_m}}
\times\frac{W_Y(b_m)-W_Y(a_m)}{\sqrt{b_m-a_m}}$.
Then
\begin{align}
\frac{Z^{8}_m-\tilde Z^{8}_m}{E\left(\tilde Z^{8}_m\right)}
&=\frac{Z^{8}_m-\tilde Z^{8}_m}{\tilde Z^{8}_m}
\times\frac{\tilde Z^{8}_m}{E\left(\tilde Z^{8}_m\right)}
\stackrel{a.s.}{\longrightarrow}0~~\mbox{as}~~m\rightarrow\infty.
\label{eq:A6_3}
\end{align}
because, due to \ref{eq:approx} the first factor on the right hand side of (\ref{eq:A6_3})
tends to zero almost surely, while by (H11) (iii) the second factor is bounded above 
by a constant times standard normal distribution
raised to the power $6$.
It can be easily verified using (H11) (iii) that $\underset{m\geq 1}{\sup}~
E\left[\frac{Z^{8}_m-\tilde Z^{8}_m}{E\left(\tilde Z^{8}_m\right)}\right]^2<\infty$,
so that $\frac{Z^{8}_m-\tilde Z^{8}_m}{E\left(\tilde Z^{8}_m\right)}$ is uniformly integrable.
Hence, it follows from (\ref{eq:A6_3}) that 
\begin{align}
&\frac{E\left(Z^{8}_m\right)-E\left(\tilde Z^{8}_m\right)}{E\left(\tilde Z^{8}_m\right)}
\rightarrow 0,~~\mbox{as}~~m\rightarrow\infty.\notag
\end{align}
In other words, as $m\rightarrow\infty$,
\begin{equation}
E\left(Z^{8}_m\right)\stackrel{a.s.}{\sim} E\left(\tilde Z^{8}_m\right).
\label{eq:A6_4}
\end{equation}

Now note that for studying convergence of the double sum (\ref{eq:A6_2}), it is enough to
investigate convergence of  
\begin{equation*}
S_{T_0} = \sum_{T=T_0}^{\infty}\sum_{m=T+1}^{\infty}
E\left(\frac{1}{b_m-a_m}\log R_m(\hat\theta_T)+h(\hat\theta_T)\right)^{8},
\end{equation*}
for some sufficiently large $T_0$. By virtue of (\ref{eq:A6_4}) it is then enough to study
convergence of
\begin{align}
S_{T_0} &= \sum_{T=T_0}^{\infty}\sum_{m=T+1}^{\infty}
E\left(\tilde Z^{8}_m\right)\notag\\
&=\tilde c\sum_{T=T_0}^{\infty}\sum_{m=T+1}^{\infty}
\frac{\left(\psi_Y(\tilde\theta_T)-\psi_Y(\theta_0)\right)^8}{(b_m-a_m)^4},\notag
\end{align}
where $\tilde c~(>0)$ is a constant.
By (H11) (iii), 
for sufficiently large $T$,
$\left(\psi_Y(\tilde\theta_T)-\psi_Y(\theta_0)\right)^8\leq C_2(b_T-a_T)$, for some
$C_2>0$. 
Hence, 
\begin{equation*}
S_{T_0}\leq C_Y\sum_{T=T_0}^{\infty}\sum_{m=T+1}^{\infty}
\frac{b_T-a_T}{(b_m-a_m)^4},
\end{equation*}
where $C_Y~(>0)$ is a constant.
Now note that, since $(b_T-a_T)$ is increasing in $T$, $(b_{T_0+j}-a_{T_0+j})<(b_{T_0+j+1}-a_{T_0+j+1})$ for $j\geq 0$,
so that
\begin{align}
&\sum_{T=T_0}^{\infty}\sum_{m=T+1}^{\infty}\frac{b_T-a_T}{(b_m-a_m)^4}
=\frac{(b_{T_0}-a_{T_0})}{(b_{T_0+1}-a_{T_0+1})^4}
+\frac{(b_{T_0}-a_{T_0})+(b_{T_0+1}-a_{T_0+1})}{(b_{T_0+2}-a_{T_0+2})^4}\notag\\
&\qquad\qquad+\frac{(b_{T_0}-a_{T_0})+(b_{T_0+1}-a_{T_0+1})+(b_{T_0+2}-a_{T_0+2})}{(b_{T_0+3}-a_{T_0+3})^4}+\cdots\notag\\
&\leq \sum_{k=1}^{\infty}\frac{k}{(b_{T_0+k}-a_{T_0+k})^3}\notag\\
&\leq \sum_{k=1}^{\infty}\frac{k}{(T_0+k)^3}\leq \sum_{k=1}^{\infty}\frac{(T_0+k)}{(T_0+k)^3}
=\sum_{k=1}^{\infty}\frac{1}{(T_0+k)^2}\leq \sum_{k=1}^{\infty}\frac{1}{k^2}\notag\\
&<\infty.\notag
\end{align}
That is, $S_{T_0}<\infty$ for sufficiently large $T_0$.
In other words, (A6) holds.

\subsubsection{Verification of (A7)}
\label{subsubsec:A7}
For any set $A\subseteq\Theta$ with $\pi(A)>0$, it follows that $\mathcal G_T\cap A\rightarrow\Theta\cap A=A$.
Since $h\left(\mathcal G_T\cap A\right)$ is non-increasing as $T$ increases, it follows that
$h\left(\mathcal G_T\cap A\right)\rightarrow h\left(A\right)$,
as $T\rightarrow\infty$.

To summarize, we have the following theorem on posterior convergence of $\theta$.
\begin{theorem}
\label{theorem:bayesian_convergence}
Assume that the data was generated by the true model given by (\ref{eq:data_sde1}) and (\ref{eq:state_space_sde1}), but
modeled by (\ref{eq:data_sde2}) and (\ref{eq:state_space_sde2}). 
Assume (H1)--(H10) and (H11) (i) -- (v). 
For the prior $\pi$ on $\theta$, consider any set $A\in\mathcal T$ with $\pi(A)>0$ and $h(A)>h(\Theta)$. 
Then, almost surely,
\begin{equation*}
\underset{T\rightarrow\infty}{\lim}~\pi(A|\mathcal F_T)=0.
\end{equation*}
Moreover, if $\beta>2h(A)$ or $A\subset\cap_{k=T}^{\infty}\mathcal G_k$ for some $T$, then almost surely,
\begin{equation*}
\underset{T\rightarrow\infty}{\lim}~\frac{1}{b_T-a_T}\log\pi(A|\mathcal F_T)=-J(A).
\end{equation*}
\end{theorem}

\section{Consistency and asymptotic normality of the maximum likelihood estimator}
\label{sec:classical_asymptotics}


Now we make the following further assumption:
\begin{itemize}
\item[(H12)] The parameter space $\Theta$ is compact.
\end{itemize}

Let 
\begin{align}
g_{Y,T}(\theta)&=-\frac{K_Y}{2}\left(\psi_Y(\theta)-\psi_Y(\theta_0)\right)^2
+\sqrt{K_Y}\left(\psi_Y(\theta)-\psi_Y(\theta_0)\right)\frac{W_Y(b_T)-W_Y(a_T)}{b_T-a_T};\label{eq:g_Y}\\
g_{X,T}(\theta)&=-\frac{K_X}{2}\left(\psi_X(\theta)-\psi_X(\theta_0)\right)^2
+\frac{K_X}{2}\left(\psi^2_X(\theta)-\psi^2_X(\theta_0)\right).\label{eq:g_X}
\end{align}
Then note that
\begin{equation}
\underset{\theta\in\Theta}{\sup}~\left|\frac{1}{b_T-a_T}\log R_T(\theta)-g_{Y,T}(\theta)-g_{X,T}(\theta)\right|
=\left|\frac{1}{b_T-a_T}\log R_T(\theta^*_T)-g_{Y,T}(\theta^*_T)-g_{X,T}(\theta^*_T)\right|,\label{eq:mle_unicon}
\end{equation}
for some $\theta^*_T\in\Theta$ where $\theta^*_T$ is dependent on data. Proceeding in the same way as in Section \ref{subsubsec:A5_2} it is easily seen that
(\ref{eq:mle_unicon}) tends to zero almost surely with respect to both $Y$ and $X$, as $T\rightarrow\infty$.
Hence, the maximum likelihood estimator ($MLE$) can be approximated by maximizing the function
$$\tilde g_T(\theta)=g_{Y,T}(\theta)+g_{X,T}(\theta)$$ with respect to $\theta$.

\subsection{Strong consistency of the maximum likelihood estimator of $\theta$}
\label{subsec:mle_strong_consistency}

Observe that for $k=1,\ldots,d$,
\begin{align}
\frac{\partial\tilde g_T(\theta)}{\partial\theta_k}&=-K_Y\left(\psi_Y(\theta)-\psi_Y(\theta_0)\right)
\frac{\partial\psi_Y(\theta)}{\partial\theta_k}
-K_X\left(\psi_X(\theta)-\psi_X(\theta_0)\right)\frac{\partial\psi_X(\theta)}{\partial\theta_k}\notag\\
&\qquad+K_X\psi_X(\theta)\frac{\partial\psi_X(\theta)}{\partial\theta_k}
+\sqrt{K_Y}\frac{\partial\psi_Y(\theta)}{\partial\theta_k}\frac{W_Y(b_T)-W_Y(a_T)}{b_T-a_T}.\notag
\end{align}
Let $$\tilde g'_T(\theta)=\left(\frac{\partial\tilde g_T(\theta)}{\partial\theta_1},\ldots,
\frac{\partial\tilde g_T(\theta)}{\partial\theta_d}\right)^T.$$
Also, let
$\tilde g''_T(\theta)
=\left(\begin{array}{cccc}
\frac{\partial^2\tilde g_T(\theta)}{\partial\theta^2_1}&
\frac{\partial^2\tilde g_T(\theta)}{\partial\theta_1\partial\theta_2}&\cdots&
\frac{\partial^2\tilde g_T(\theta)}{\partial\theta_1\partial\theta_d}\\
\frac{\partial^2\tilde g_T(\theta)}{\partial\theta_2\partial\theta_1}&
\frac{\partial^2\tilde g_T(\theta)}{\partial\theta^2_2}&\cdots&
\frac{\partial^2\tilde g_T(\theta)}{\partial\theta_2\partial\theta_d}\\
\cdots & \cdots & \cdots & \cdots\\
\frac{\partial^2\tilde g_T(\theta)}{\partial\theta_d\partial\theta_1}&
\frac{\partial^2\tilde g_T(\theta)}{\partial\theta_d\partial\theta_2}&\cdots&
\frac{\partial^2\tilde g_T(\theta)}{\partial\theta^2_d}
\end{array}\right)$ denote the matrix with $(j,k)$-th element given by
\begin{align}
\frac{\partial^2 \tilde g_T(\theta)}{\partial\theta_j\partial\theta_k}&
= -K_Y\left[\frac{\partial\psi_Y(\theta)}{\partial\theta_j}\frac{\partial\psi_Y(\theta)}{\partial\theta_k}
+\left(\psi_Y(\theta)-\psi_Y(\theta_0)\right)\frac{\partial^2\psi_Y(\theta)}{\partial\theta_j\partial\theta_k}\right]
\notag\\
&\quad
-K_X\left[\frac{\partial\psi_X(\theta)}{\partial\theta_j}\frac{\partial\psi_X(\theta)}{\partial\theta_k}
+\left(\psi_X(\theta)-\psi_X(\theta_0)\right)\frac{\partial^2\psi_X(\theta)}{\partial\theta_j\partial\theta_k}\right]\notag\\
&\quad+K_X\left[\frac{\partial\psi_X(\theta)}{\partial\theta_j}\frac{\partial\psi_X(\theta)}{\partial\theta_k}
+\psi_X(\theta)\frac{\partial^2\psi_X(\theta)}{\partial\theta_j\partial\theta_k}\right]\notag\\
&\quad+\sqrt{K_Y}\frac{\partial^2\psi_Y(\theta)}{\partial\theta_j\partial\theta_k}
\frac{W_Y(b_T)-W_Y(a_T)}{b_T-a_T}.\notag
\end{align}
Note that by (H11) (vi),
\begin{align}
\left[\frac{\partial\tilde g_T(\theta)}{\partial\theta_k}\right]_{\theta=\theta_0}
&=\sqrt{K_Y}\left[\frac{\partial\psi_Y(\theta)}{\partial\theta_k}\right]_{\theta=\theta_0}\frac{W_Y(b_T)-W_Y(a_T)}{b_T-a_T}
\label{eq:g1}\\
\left[\frac{\partial^2 \tilde g_T(\theta)}{\partial\theta_j\partial\theta_k}\right]_{\theta=\theta_0}
&=-K_Y\left[\frac{\partial\psi_Y(\theta)}{\partial\theta_j}
\frac{\partial\psi_Y(\theta)}{\partial\theta_k}\right]_{\theta=\theta_0}
+\sqrt{K_Y}\left[\frac{\partial^2\psi_Y(\theta)}{\partial\theta_j\partial\theta_k}\right]_{\theta=\theta_0}
\frac{W_Y(b_T)-W_Y(a_T)}{b_T-a_T}.
\label{eq:g2}
\end{align}

Letting $\hat\theta_{T}$ denote the $MLE$, note that
\begin{equation}
0=\tilde g'_T(\hat\theta_{T})=\tilde g'_T(\theta_0)+\tilde g''_T(\theta^*_T)(\hat\theta_T-\theta_0),
\label{eq:taylor1}
\end{equation}
where $\theta^*_T$ lies between $\theta_0$ and $\hat\theta_T$. 
From (\ref{eq:g2}) it is clear that 
$$\left[\frac{\partial^2\tilde g_T(\theta)}{\partial\theta_j\partial\theta_k}\right]_{\theta=\theta_0} 
\stackrel{a.s.}{\longrightarrow}
-K_Y\left[\frac{\partial\psi_Y(\theta)}{\partial\theta_j}\frac{\partial\psi_Y(\theta)}
{\partial\theta_k}\right]_{\theta=\theta_0},
$$
as $T\rightarrow\infty$.
Let $\mathcal I(\theta)$ denote the matrix with $(j,k)$-th element given by
\begin{equation*}
\left\{\mathcal I(\theta)\right\}_{jk}=K_Y\left[\frac{\partial\psi_Y(\theta)}{\partial\theta_j}\frac{\partial\psi_Y(\theta)}
{\partial\theta_k}\right].
\end{equation*}
From (\ref{eq:g1}) it is obvious that $\left\{\mathcal I(\theta_0)\right\}_{jk}$ is the covariance between the $j$-th
and the $k$-th components of $\sqrt{b_T-a_T}\tilde g'_T(\theta_0)$, and so $\mathcal I(\theta_0)$ is non-negative definite.
We make the following assumptions:
\begin{itemize}
\item[(H13)] The true value $\theta_0\in \mbox{int} (\Theta)$, where by $\mbox{int} (\Theta)$ we mean
the interior of $\Theta$.
\item[(H14)] The matrix $\mathcal I(\theta)$ is positive definite for $\theta\in \mbox{int} (\Theta)$.
\end{itemize}
Hence, from (\ref{eq:taylor1}) we obtain, after pre-multiplying both sides of the relevant equation with
$\mathcal I^{-1}(\theta^*_T)$, the following:
\begin{equation}
-\mathcal I^{-1}(\theta^*_T)\tilde g''_T(\theta^*_T)\left(\hat\theta_T-\theta_0\right)
=\mathcal I^{-1}(\theta^*_T)\tilde g'_T(\theta_0).
\label{eq:mle1}
\end{equation}

Since as $T\rightarrow\infty$, $\tilde g'_T(\theta_0)\stackrel{a.s.}{\longrightarrow}0$ 
and $-\mathcal I^{-1}(\theta^*_T)\tilde g''_T(\theta^*_T)\stackrel{a.s.}{\longrightarrow} \mathfrak I_d$,
$\mathfrak I_d$ being the identity matrix of order $d$, it hold that
\begin{equation}
\hat\theta_T\stackrel{a.s.}{\longrightarrow}\theta_0,
\label{eq:mle_theta}
\end{equation}
as $T\rightarrow\infty$,
showing that the $MLE$ is strongly consistent.
The result can be formalized as the following theorem.
\begin{theorem}
\label{theorem:mle_strong_consistency1}
Assume that the data was generated by the true model given by (\ref{eq:data_sde1}) and (\ref{eq:state_space_sde1}), but
modeled by (\ref{eq:data_sde2}) and (\ref{eq:state_space_sde2}).
Assume conditions (H1)--(H14). Then the $MLE$ of $\theta$ is strongly consistent in the sense that
(\ref{eq:mle_theta}) holds. 
\end{theorem}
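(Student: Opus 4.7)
The plan is to combine the uniform log-likelihood approximation already displayed in (\ref{eq:mle_unicon}) with an argmax continuous-mapping argument on the compact parameter space $\Theta$, after showing that the auxiliary function $\tilde g_T(\theta) = g_{Y,T}(\theta) + g_{X,T}(\theta)$ converges almost surely and uniformly in $\theta$ to the deterministic function $-h(\theta)$ defined in (\ref{eq:h}).

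First I would invoke Theorem \ref{theorem:asymp_approx2} at the data-dependent supremum-attaining point $\theta^*_T$ appearing in (\ref{eq:mle_unicon}): since $\Theta$ is compact by (H12), the sequence $\theta^*_T$ is automatically bounded, so the divergent branch of (H11)(iii) is vacuous, and (\ref{eq:mle_unicon}) gives
$$\sup_{\theta\in\Theta}\left|\frac{1}{b_T-a_T}\log R_T(\theta) - \tilde g_T(\theta)\right|\stackrel{a.s.}{\longrightarrow}0.$$
Hence any maximizer $\hat\theta_T$ of $\log R_T$ is asymptotically equivalent to a maximizer of $\tilde g_T$. Moreover, $\tilde g_T(\theta)+h(\theta)$ equals $\sqrt{K_Y}(\psi_Y(\theta)-\psi_Y(\theta_0))(W_Y(b_T)-W_Y(a_T))/(b_T-a_T)$, which vanishes uniformly in $\theta$ almost surely: the Brownian factor tends a.s. to $0$ (using $(b_T-a_T)\to\infty$ and the fact that Brownian increments grow only as $O(\sqrt{b_T-a_T})$ a.s.), while the $\theta$-dependent factor is bounded and continuous on the compact $\Theta$ by (H11)(vii).

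To close the argument I identify the unique maximizer of $-h$. By (H11)(v) each of the three summands in (\ref{eq:h}) is nonnegative, so $h\geq 0$; and $h(\theta) = 0$ forces $\psi_Y(\theta) = \psi_Y(\theta_0)$ and (combining the second and third summands) $\psi_X(\theta) = \psi_X(\theta_0)$, whence $\theta = \theta_0$ by identifiability (H11)(i). The standard argmax-continuous-mapping conclusion (unique maximum plus uniform convergence plus compactness) then yields $\hat\theta_T\stackrel{a.s.}{\to}\theta_0$. The main obstacle is the uniform-in-$\theta$ extension of Theorem \ref{theorem:asymp_approx} to a data-dependent sequence $\theta^*_T$ without relying on coerciveness as in the earlier sieve argument; this is precisely what Theorem \ref{theorem:asymp_approx2} provides, with compactness of $\Theta$ removing the divergent case in its hypothesis, and the remainder of the proof is routine.
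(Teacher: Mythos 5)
Your proposal is correct, but it closes the argument by a genuinely different route than the paper. Both you and the paper begin identically: the uniform approximation $\sup_{\theta\in\Theta}\left|\tfrac{1}{b_T-a_T}\log R_T(\theta)-\tilde g_T(\theta)\right|\stackrel{a.s.}{\longrightarrow}0$ over the compact $\Theta$, justified through Theorem \ref{theorem:asymp_approx2} applied at the data-dependent point where the supremum is attained. From there the paper proceeds via the score equation: it Taylor-expands $\tilde g'_T(\hat\theta_T)$ about $\theta_0$ as in (\ref{eq:taylor1}), uses (H11)(vi) to simplify $\tilde g'_T(\theta_0)$ and $\tilde g''_T(\theta_0)$ to the forms (\ref{eq:g1})--(\ref{eq:g2}), and concludes from (\ref{eq:mle1}) using $\tilde g'_T(\theta_0)\stackrel{a.s.}{\longrightarrow}0$ together with $-\mathcal I^{-1}(\theta^*_T)\tilde g''_T(\theta^*_T)\stackrel{a.s.}{\longrightarrow}\mathfrak I_d$, invoking (H13) and (H14). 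You instead observe that $\tilde g_T(\theta)+h(\theta)$ is exactly the Brownian-increment term, which vanishes uniformly on $\Theta$, identify $\theta_0$ as the unique zero of $h$ via the nonnegativity of each summand in (\ref{eq:h}) (using (H11)(v)) and identifiability (H11)(i), and finish with the standard Wald-type argmax argument on a compact set. Your route is more elementary: it needs only continuity of $\psi_Y,\psi_X$ and none of (H11)(vi), (H13) or (H14), and it avoids the delicate point in the paper's argument that the convergence $-\mathcal I^{-1}(\theta^*_T)\tilde g''_T(\theta^*_T)\rightarrow\mathfrak I_d$ at the intermediate point $\theta^*_T$ is most naturally justified only once one already knows $\theta^*_T\rightarrow\theta_0$. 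What the paper's route buys is that the objects $\tilde g'_T$, $\tilde g''_T$ and $\mathcal I(\theta)$ introduced for consistency are reused verbatim in the asymptotic-normality proof of Section \ref{subsec:mle_normality}, so the two results come out of a single Taylor expansion.
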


\subsection{Asymptotic normality of the maximum likelihood estimator of $\theta$}
\label{subsec:mle_normality}

Since $\hat\theta_T\stackrel{a.s.}{\longrightarrow}\theta_0$ and $\theta^*_T$ lies between $\theta_0$
and $\hat\theta_T$, it follows that $\theta^*_T\stackrel{a.s.}{\longrightarrow}\theta_0$ as $T\rightarrow\infty$.
This, and the fact that $(W_Y(b_T)-W_Y(a_T))/\sqrt{b_T-a_T}\sim N(0,1)$, guarantee that
\begin{equation*}
-\sqrt{b_T-a_T}\mathcal I^{-1}(\theta^*_T)\tilde g'_T(\theta_0)
\stackrel{\mathcal L}{\longrightarrow} N(\bzero,\mathcal I^{-1}(\theta_0)),
\end{equation*}
where $``\stackrel{\mathcal L}{\longrightarrow}"$ denotes convergence in distribution.
From (\ref{eq:mle1}) it then follows, using the fact 
$\mathcal I^{-1}(\theta^*_T)\tilde g''_T(\theta^*_T)\stackrel{a.s.}{\longrightarrow} \mathfrak I_d$, that
\begin{equation}
\sqrt{b_T-a_T}\left(\hat\theta_T-\theta_0\right)
\stackrel{\mathcal L}{\longrightarrow}N_d\left(0,\mathcal I^{-1}(\theta_0)\right).
\label{eq:clt2}
\end{equation}
Thus, we can present the following theorem.
\begin{theorem}
\label{theorem:mle_normality1}
Assume that the data was generated by the true model given by (\ref{eq:data_sde1}) and (\ref{eq:state_space_sde1}), but
modeled by (\ref{eq:data_sde2}) and (\ref{eq:state_space_sde2}).
Assume conditions (H1)--(H14). Then the $MLE$ of $\theta$ is asymptotically normal in the sense that
(\ref{eq:clt2}) holds. 
\end{theorem}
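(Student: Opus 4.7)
The plan is to start from the first-order condition already in hand: since $\hat\theta_T$ is the MLE, the Taylor expansion (4.9) yields $\hat\theta_T - \theta_0 = -[\tilde g''_T(\theta^*_T)]^{-1}\tilde g'_T(\theta_0)$ for some intermediate $\theta^*_T$ between $\theta_0$ and $\hat\theta_T$, and I need only scale by $\sqrt{b_T - a_T}$ and take limits. Thus the proof reduces to identifying the limiting distribution of $\sqrt{b_T-a_T}\,\tilde g'_T(\theta_0)$ and the almost sure limit of $[\tilde g''_T(\theta^*_T)]^{-1}$, then applying Slutsky.

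First I would handle the score term. Because $\psi_X$ has vanishing first derivative at $\theta_0$ by (H11)(vi), formula (4.6) collapses to
\begin{equation*}
\sqrt{b_T-a_T}\,\tilde g'_T(\theta_0)
= \sqrt{K_Y}\,\nabla\psi_Y(\theta_0)\cdot\frac{W_Y(b_T)-W_Y(a_T)}{\sqrt{b_T-a_T}}.
\end{equation*}
The scalar factor on the right is exactly $N(0,1)$ distributed by the independent-increments property of the Wiener process, so $\sqrt{b_T-a_T}\,\tilde g'_T(\theta_0)$ is, in fact, exactly multivariate normal with mean $\mathbf 0$ and covariance $K_Y\,\nabla\psi_Y(\theta_0)\nabla\psi_Y(\theta_0)^{\top}=\mathcal I(\theta_0)$ by the definition (4.8). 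No central limit theorem is actually needed at this step; the normality is built in through the driving Brownian motion.

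Next I would deal with the Hessian term. Consistency (Theorem 4.2) gives $\hat\theta_T\stackrel{a.s.}{\to}\theta_0$, and since $\theta^*_T$ lies between $\theta_0$ and $\hat\theta_T$, $\theta^*_T\stackrel{a.s.}{\to}\theta_0$ as well. Evaluating (4.7) at $\theta_0$ and using (H11)(vi) makes all $\psi_X$ contributions vanish, leaving $\tilde g''_T(\theta_0)\stackrel{a.s.}{\to}-\mathcal I(\theta_0)$ since $(W_Y(b_T)-W_Y(a_T))/(b_T-a_T)\stackrel{a.s.}{\to}0$. Continuity of the second derivatives of $\psi_Y$ and $\psi_X$ (H11)(vii), together with the convergence $\theta^*_T\to\theta_0$, then upgrades this to $\tilde g''_T(\theta^*_T)\stackrel{a.s.}{\to}-\mathcal I(\theta_0)$; invertibility of $\mathcal I(\theta_0)$ in a neighbourhood of $\theta_0$ by (H14) allows passage to the inverse, yielding $-[\tilde g''_T(\theta^*_T)]^{-1}\stackrel{a.s.}{\to}\mathcal I^{-1}(\theta_0)$, equivalently the statement $-\mathcal I^{-1}(\theta^*_T)\tilde g''_T(\theta^*_T)\stackrel{a.s.}{\to}\mathfrak I_d$ written in (4.10).

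Finally I would glue the two pieces by Slutsky's theorem:
\begin{equation*}
\sqrt{b_T-a_T}\,(\hat\theta_T-\theta_0)
= \big(-[\tilde g''_T(\theta^*_T)]^{-1}\big)\cdot\sqrt{b_T-a_T}\,\tilde g'_T(\theta_0)
\stackrel{\mathcal L}{\longrightarrow}\mathcal I^{-1}(\theta_0)\,Z,
\end{equation*}
where $Z\sim N_d(\mathbf 0,\mathcal I(\theta_0))$; the resulting covariance is $\mathcal I^{-1}(\theta_0)\mathcal I(\theta_0)\mathcal I^{-1}(\theta_0)=\mathcal I^{-1}(\theta_0)$, which is the claim (5.2). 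I expect the only delicate point to be the uniform approximation needed to justify working with $\tilde g_T$ in place of $(b_T-a_T)^{-1}\log R_T$ at a data-dependent $\theta^*_T$; this was already settled in Theorem 3.3 and in the displayed bound (5.1), so no new technical device is required. The rest is a textbook delta-method argument made unusually clean because the driving noise in the score is literally a Brownian increment.
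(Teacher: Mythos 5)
Your proposal follows essentially the same route as the paper: the Taylor expansion (\ref{eq:taylor1}) of the approximate score, exact normality of $\sqrt{b_T-a_T}\,\tilde g'_T(\theta_0)$ inherited directly from the Brownian increment $(W_Y(b_T)-W_Y(a_T))/\sqrt{b_T-a_T}\sim N(0,1)$, the almost sure convergence $\tilde g''_T(\theta^*_T)\stackrel{a.s.}{\longrightarrow}-\mathcal I(\theta_0)$ via $\theta^*_T\stackrel{a.s.}{\longrightarrow}\theta_0$, and Slutsky. The only cosmetic difference is that the paper pre-multiplies (\ref{eq:taylor1}) by $\mathcal I^{-1}(\theta^*_T)$ rather than inverting the Hessian directly, which requires exactly the same ingredients, so the two arguments coincide in substance.
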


%

\section{Asymptotic posterior normality}
\label{sec:posterior_normal}

Let $\ell_T(\theta)=\log L_T(\theta)$ stand for the log-likelihood, and let 
\begin{equation*}
\Sigma^{-1}_T=\left\{\begin{array}{cc}
-\ell''_T(\hat\theta_T) & \mbox{if the inverse and}\ \ \hat\theta_T\ \ \mbox{exist}\\
\mathfrak I_d & \mbox{if not},
\end{array}\right.
\end{equation*}
where for any $z$,
\begin{equation*}
\ell''_T(z)=\left(\left(\frac{\partial^2}{\partial\theta_i\partial\theta_j}\ell_T(\theta)\bigg\vert_{\theta=z}\right)\right).
\end{equation*}
Thus, $\Sigma^{-1}_T$ is the observed Fisher's information matrix.

\subsection{Regularity conditions and a theorem of \ctn{Schervish95}}
\label{subsec:regularity_conditions}
\begin{itemize}
\item[(1)] The parameter space is $\Theta\subseteq\mathbb R^d$ for some finite $d$.
\item[(2)] $\theta_0$ is a point interior to $\Theta$.
\item[(3)] The prior distribution of $\theta$ has a density with respect to Lebesgue measure
that is positive and continuous at $\theta_0$.
\item[(4)] There exists a neighborhood $\mathcal N_0\subseteq\Theta$ of $\theta_0$ on which
$\ell_T(\theta)= \log L_T(\theta)$ is twice continuously differentiable with 
respect to all co-ordinates of $\theta$, 
$a.s.$ $[P_{\theta_0}]$.
\item[(5)] The largest eigenvalue of $\Sigma_T$ goes to zero in probability.
\item[(6)] For $\delta>0$, define $\mathcal N_0(\delta)$ to be the open ball of radius $\delta$ around $\theta_0$.
Let $\rho_T$ be the smallest eigenvalue of $\Sigma_T$. If $\mathcal N_0(\delta)\subseteq\Theta$, there exists
$K(\delta)>0$ such that
\begin{equation}
\underset{T\rightarrow\infty}{\lim}~P_{\theta_0}\left(\underset{\theta\in\Theta\backslash\mathcal N_0(\delta)}{\sup}~
\rho_T\left[\ell_T(\theta)-\ell_T(\theta_0)\right]<-K(\delta)\right)=1.
\label{eq:extra1}
\end{equation}
\item[(7)] For each $\epsilon>0$, there exists $\delta(\epsilon)>0$ such that
\begin{equation}
\underset{T\rightarrow\infty}{\lim}~P_{\theta_0}\left(\underset{\theta\in\mathcal N_0(\delta(\epsilon)),\|\gamma\|=1}{\sup}~
\left\vert 1+\gamma^T\Sigma^{\frac{1}{2}}_T\ell''_T(\theta)\Sigma^{\frac{1}{2}}_T\gamma\right\vert<\epsilon\right)=1.
\label{eq:extra2}
\end{equation}
\end{itemize}

\begin{theorem}[\ctn{Schervish95}]
\label{theorem:theorem4}
Assume the above seven regularity conditions. 
Then denoting $\Psi_T=\Sigma^{-1/2}_T\left(\theta-\hat\theta_T\right)$, for each compact subset 
$B$ of $\mathbb R^d$ and each $\epsilon>0$, the following holds:
\begin{equation*}
\lim_{T\rightarrow\infty}P_{\theta_0}
\left(\sup_{\Psi_T\in B}\left\vert\pi(\Psi_T\vert\mathcal F_T)-\varrho(\Psi_T)\right\vert>\epsilon\right)=0,
\end{equation*}
where $\varrho(\cdot)$ denotes the density of the standard normal distribution.
\end{theorem}

\subsection{Verification of the seven regularity conditions for posterior normality}
\label{subsec:verification_posterior_normality}

Also we assume that $\Theta$ is compact (assumption (H11)) which enables us to uniformly approximate
$\frac{1}{b_T-a_T}\log R_T(\theta)$ by $g_{Y,T}(\theta)+g_{X,T}(\theta)$ for $\theta\in\Theta$; 
see Section \ref{sec:classical_asymptotics}.
As a consequence, $\frac{1}{b_T-a_T}\ell_T(\theta)$ can be uniformly approximated by 
$g_{Y,T}(\theta)+g_{X,T}(\theta)+\frac{1}{b_T-a_T}\log p_T(\theta_0)$, for $\theta\in\Theta$.
Let
\begin{equation*}
\frac{1}{b_T-a_T}\tilde\ell_T(\theta)=g_{Y,T}(\theta)+g_{X,T}(\theta)+\frac{1}{b_T-a_T}\log p_T(\theta_0).
\end{equation*}
Henceforth, we shall be working with $ \frac{1}{b_T-a_T}\tilde\ell_T(\theta)$ whenever convenient. With this, the first four
regularity conditions presented in Section \ref{subsec:regularity_conditions} trivially hold.

To verify regularity condition (5), 
note that, since $\hat\theta_T\stackrel{a.s.}{\longrightarrow}\theta_0$, 
\begin{equation*}
\frac{1}{b_T-a_T}\tilde\ell''_T(\hat\theta_T)\stackrel{a.s.}{\longrightarrow}
-\mathcal I(\theta_0).
\end{equation*}
Hence, almost surely, $$\Sigma^{-1}_T\sim (b_T-a_T)\times\mathcal I(\theta_0),$$
so that $$\Sigma_T\stackrel{a.s.}{\longrightarrow} 0,$$ as $T\rightarrow\infty$. Thus, regularity condition (5) holds.

For verifying condition (6), observe that
\begin{equation*}
\rho_T\left[\ell_T(\theta)-\ell_T(\theta_0)\right]=\rho_T(b_T-a_T)\times\frac{1}{b_T-a_T}\log R_T(\theta),
\end{equation*}
where $\rho_T(b_T-a_T)\rightarrow c$, for some $c>0$ and, due to (\ref{eq:log_R_T_limit}),
\begin{align}
&\rho_T\left[\ell_T(\theta)-\ell_T(\theta_0)\right]\notag\\
&\quad\stackrel{a.s.}{\longrightarrow}
-\frac{c}{2}\left[K_Y\left(\psi_Y(\theta)-\psi_Y(\theta_0)\right)^2+
K_X\left(\psi_X(\theta)-\psi_X(\theta_0)\right)^2
+K_X\left(\psi^2_X(\theta_0)-\psi^2_X(\theta)\right)\right],
\label{eq:6_2}
\end{align}
for all $\theta\in \Theta\setminus\mathcal N_0(\delta)$.
Now note that
\begin{align}
&\underset{T\rightarrow\infty}{\lim}~P_{\theta_0}\left(\underset{\theta\in\Theta\backslash\mathcal N_0(\delta)}{\sup}~
\rho_T\left[\ell_T(\theta)-\ell_T(\theta_0)\right]<-K(\delta)\right)\notag\\
&\geq \underset{T\rightarrow\infty}{\lim}~P_{\theta_0}\left(\left(\rho_T(b_T-a_T)\right)
\times\frac{1}{b_T-a_T}\log R_T(\theta)<-K(\delta)~
\forall\theta\in\Theta\setminus\mathcal N_0(\delta)\right)\notag\\
&=1,
\label{eq:6_3}
\end{align}
the last step following due to (\ref{eq:6_2}). Thus, regularity condition (6) is verified.

For verifying condition (7), we note that $\theta\in\mathcal N_0(\delta(\epsilon))$ can be represented as 
$\theta=\theta_0+\delta_2\frac{\theta_0}{\|\theta_0\|}$, where $0<\delta_2\leq\delta(\epsilon)$. 
Hence, Taylor's series expansion around $\theta_0$ yields 
\begin{equation}
\frac{\tilde{\ell}''_T(\theta)}{b_T-a_T}=\frac{\tilde{\ell}''_T(\theta_0)}{b_T-a_T}+
\delta_2\frac{\tilde{\ell}'''_T(\theta^*)\theta_0}{(b_T-a_T)\|\theta_0\|},
\label{eq:last2}
\end{equation}
where $\theta^*$ lies between $\theta_0$ and $\theta$.
As $T\rightarrow\infty$, $\frac{\tilde{\ell}''_T(\theta_0)}{b_T-a_T}$ tends to 
$-\mathcal I(\theta_0)$, almost surely.
Now notice that
$$\frac{\left\|\tilde{\ell}'''_T(\theta^*)\theta_0\right\|}{(b_T-a_T)\|\theta_0\|}
\leq \frac{\|\tilde{\ell}'''_T(\theta^*)\|}{b_T-a_T}.$$ 
Because of (H11) (vii) and compactness of $\Theta$
it follows that $\frac{\|\tilde{\ell}'''_T(\theta^*)\|}{b_T-a_T}\rightarrow 0$ as $T\rightarrow\infty$.
Hence, it follows that $\tilde{\ell}''_T(\theta)=O\left(-(b_T-a_T)\times {\mathcal I}(\theta_0)+(b_T-a_T)\delta_2\right)$,
almost surely.
Since $\Sigma^{\frac{1}{2}}_T$ is asymptotically almost surely equivalent to $(b_T-a_T)^{-\frac{1}{2}}
{\mathcal I}^{-\frac{1}{2}}(\theta_0)$, condition (7) holds.
We summarize our result in the form of the following theorem.
\begin{theorem}
\label{theorem:theorem5}
Assume that the data was generated by the true model given by (\ref{eq:data_sde1}) and (\ref{eq:state_space_sde1}), but
modeled by (\ref{eq:data_sde2}) and (\ref{eq:state_space_sde2}).
Assume (H1) -- (H14). 
Then denoting $\Psi_T=\Sigma^{-1/2}_T\left(\theta-\hat\theta_T\right)$, for each compact subset 
$B$ of $\mathbb R^d$ and each $\epsilon>0$, the following holds:
\begin{equation*}
\lim_{T\rightarrow\infty}P_{\theta_0}
\left(\sup_{\Psi_T\in B}\left\vert\pi(\Psi_T\vert\mathcal F_T)-\varrho(\Psi_T)\right\vert>\epsilon\right)=0,
\end{equation*}
where $\varrho(\cdot)$ denotes the density of the standard normal distribution.
\end{theorem}

\section{Random effects models based on state space $SDE$s and a brief overview of the asymptotic results}
\label{sec:random_effects_brief}

\subsection{True and postulated systems of state space $SDE$s with random effects}
\label{subsec:systems_state_space}
We now consider the following ``true" random effects models based on state space $SDE$s:
for $i=1,\ldots,n$, and for $t\in[0,b_{T}]$,
\begin{align}
d Y_i(t)&=\phi_{Y_i,0} b_Y(Y_i(t),X_i(t),t)dt+\sigma_Y(Y_i(t),X_i(t),t)dW_{Y,i}(t);
\label{eq:data_sde3}\\
d X_i(t)&=\phi_{X_i,0}b_X(X_i(t),t)dt+\sigma_X(X_i(t),t)dW_{X,i}(t).
\label{eq:state_space_sde3}
\end{align}
In the above, $\phi_{Y_i,0}=\psi_{Y_i}(\theta_0)$ and $\phi_{X_i,0}=\psi_{X_i}(\theta_0)$, where
$\psi_{Y_i}$ and $\psi_{X_i}$ are known functions; $\theta_0$ is the true set of parameters.

Our modeled state space $SDE$ is given, for $t\in [0,b_{T}]$ by:
\begin{align}
d Y_i(t)&=\phi_{Y_i} b_Y(Y_i(t),X_i(t),t)dt+\sigma_Y(Y_i(t),X_i(t),t)dW_{Y,i}(t);
\label{eq:data_sde4}\\
d X_i(t)&=\phi_{X_i}b_X(X_i(t),t)dt+\sigma_X(X_i(t),t)dW_{X,i}(t),
\label{eq:state_space_sde4}
\end{align}
where $\phi_{Y_i}=\psi_{Y_i}(\theta)$ and $\phi_{X_i}=\psi_{X_i}(\theta)$. 
As before, we wish to learn about the set of parameters $\theta$. 
Note that for simplicity of our asymptotic analysis we assumed the same time interval
$[0,b_T]$ for $i=1,\ldots,n$.
We assume that $\psi_{Y_i}(\theta)\rightarrow\bar\psi_Y(\theta)$ and $\psi_{X_i}(\theta)\rightarrow\bar\psi_X(\theta)$, as $i\rightarrow\infty$, for all $\theta\in\Theta$.
Also, let $K_{Y,i}$ and $K_{X,i}$ be the relevant constants associated with (\ref{eq:data_sde4}) and (\ref{eq:state_space_sde4}), analogous to $K_Y$ and $K_X$
associated with (\ref{eq:data_sde2}) and (\ref{eq:state_space_sde2}), respectively. We assume that $K_{Y,i}\rightarrow\bar K_Y$ and $K_{X,i}\rightarrow\bar K_X$,
as $i\rightarrow\infty$.
Let $p_{T,i}(\theta_0)$ and $L_{T,i}(\theta)$ be the true and modeled likelihoods associated with the $i$-th state space $SDE$. 

\subsection{A brief overview of the main asymptotic results}
\label{subsec:asymp_overview_ss}
\subsubsection{Posterior convergence of $\theta$}
\label{subsubsec:bayesian_consistency_ss}


Here the true likelihood on $[a_{T},b_{T}]$ is of the form  
$\bar p_{n,T}(\theta_0)=\prod_{i=1}^np_{T,i}(\theta_0)\stackrel{a.s.}{\sim}\prod_{i=1}^n\hat p_{T,i}(\theta_0)$, where
\begin{equation*}
\hat p_{T,i}(\theta_0)
=\exp\left(\frac{(b_{T}-a_{T})K_{Y_i}\phi^2_{Y_i,0}}{2}+\phi_{Y_i,0}\sqrt{K_{Y_i}}
\left(W_{Y_i}(b_{T})-W_{Y_i}(a_{T})\right)
+(b_{T}-a_{T})K_{X_i}\phi^2_{{X_i},0}\right).
\end{equation*}

The modeled likelihood on $[a_T,b_T]$ is $\bar L_{n,T}(\theta)=\prod_{i=1}^nL_{T,i}(\theta)\stackrel{a.s.}{\sim}\prod_{i=1}^n\hat L_{T,i}(\theta)$, where
\begin{align}
\hat L_{T,i}(\theta)
&=\exp\left((b_T-a_T)K_{Y_i}\phi_{Y_i}\phi_{Y_i,0}+\phi_{Y_i}\sqrt{K_{Y_i}}\left(W_{Y_i}(b_T)-W_{Y_i}(a_T)\right)\right.\notag\\
&\qquad\qquad\left.-\frac{(b_T-a_T)K_{Y_i}\phi^2_{Y_i}}{2}+(b_T-a_T)K_{X_i}\phi_{X_i}\phi_{X_i,0}\right).\notag
\end{align}

Let $\bar R_{n,T}(\theta)=\frac{\bar L_{n,T}(\theta)}{\bar p_{n,T}(\theta_0)}$.
Then the following asymptotic equipartition property holds for the systems of state space $SDE$s:
\begin{equation*}
\underset{n\rightarrow\infty}{\lim}\underset{T\rightarrow\infty}{\lim}
~\frac{1}{n(b_T-a_T)}\log \bar R_{n,T}(\theta) = -\bar h(\theta),
\end{equation*}
almost surely, where
\begin{equation*}
\bar h(\theta) = \frac{1}{2}\left[\bar K_{Y}\left(\bar\psi_{Y}(\theta)-\bar\psi_{Y}(\theta_0)\right)^2+
\bar K_{X}\left(\bar\psi_{X}(\theta)-\bar\psi_{X}(\theta_0)\right)^2
+\bar K_{X}\left(\bar\psi^2_{X}(\theta_0)-\bar\psi^2_{X}(\theta)\right)\right].
\end{equation*}

We define, in our current context, the following:
\begin{align}
\bar h\left(A\right)&=\underset{\theta\in A}{\mbox{ess~inf}}~\bar h(\theta);\notag\\
\bar J(\theta)&=\bar h(\theta)-\bar h(\Theta);\notag\\
\bar J(A)&=\underset{\theta\in A}{\mbox{ess~inf}}~\bar J(\theta).\notag
\end{align}

We summarize our Bayesian convergence result in the form of the following theorem.
\begin{theorem}
\label{theorem:bayesian_convergence_ss_brief}
Let the true, data-generating model be given by (\ref{eq:data_sde3}) and (\ref{eq:state_space_sde3}), but let
the data be modeled by (\ref{eq:data_sde4}) and (\ref{eq:state_space_sde4}).
Consider any set $A\in\mathcal T$ with $\pi(A)>0$ and $\bar h\left(A\right)>\bar h\left(\Theta\right)$. 
Then under appropriate assumptions,
\begin{equation*}
\underset{n\rightarrow\infty,~T\rightarrow\infty}{\lim}~\pi(A|\bar{\mathcal F}_{n,T})=0,
\end{equation*}
where $\bar{\mathcal F}_{n,T}=\sigma\left(\left\{Y_{i}(s);i=1,\ldots,n;~s\in [a_T,b_T]\right\}\right)$.
If the set $A$ satisfies a technical condition, then we further have
\begin{equation}
\underset{n\rightarrow\infty,~T\rightarrow\infty}{\lim}~\frac{1}{n(b_T-a_T)}\log\pi(A|\bar{\mathcal F}_{n,T})=-\bar J(A).
\end{equation}
\end{theorem}
\begin{proof}[Sketch of the proof]
The proof easily follows using the asymptotic equipartition property for the systems of state space $SDE$s and construction of appropriate sieves of the form
$\bar{\mathcal G}_{n,T}=\left\{\theta:\left|\bar\psi_Y(\theta)\right|\leq\exp\left(\bar\beta n\left(b_T-a_T\right)\right)\right\}$, which have the desired
properties. Here $\bar\beta>2\bar h\left(\Theta\right)$.
\end{proof}

\subsubsection{Strong consistency of the $MLE$ of $\theta$}
\label{subsubsec:mle_strong_consistency_ss_brief}

\begin{theorem}
\label{theorem:mle_strong_consistency_ss_brief}
Let the true, data-generating model be given by (\ref{eq:data_sde3}) and (\ref{eq:state_space_sde3}), but let
the data be modeled by (\ref{eq:data_sde4}) and (\ref{eq:state_space_sde4}).
Then, under suitable regularity conditions, the $MLE$ of $\theta$, denoted by $\hat\theta_{n,T}$, is strongly consistent in the sense that 
$\hat\theta_{n,T}\stackrel{a.s.}{\longrightarrow}\theta_0$.
\end{theorem}
\begin{proof}[Sketch of the proof]
In this case, the $MLE$ can be approximated by maximizing 
$$\bar g_{n,T}(\theta)=\bar g_{Y,T}(\theta)+\bar g_{X,T}(\theta)$$ with respect to $\theta$,
where 
\begin{align}
\bar g_{Y,T}(\theta)&=-\frac{\bar K_Y}{2}\left(\bar\psi_Y(\theta)-\bar\psi_Y(\theta_0)\right)^2
+\sqrt{\bar K_Y}\left(\bar\psi_Y(\theta)-\bar\psi_Y(\theta_0)\right)\frac{1}{n}
\sum_{i=1}^n\frac{W_{Y_i}(b_T)-W_{Y_i}(a_T)}{b_T-a_T};\notag\\
\bar g_{X,T}(\theta)&=-\frac{\bar K_X}{2}\left(\bar\psi_X(\theta)-\bar\psi_X(\theta_0)\right)^2
+\frac{\bar K_X}{2}\left(\bar\psi^2_X(\theta)-\bar\psi^2_X(\theta_0)\right).\notag\\
\end{align}
The rest of the proof follows in the same lines as that of Theorem \ref{theorem:mle_strong_consistency1_brief}.
\end{proof}

\subsubsection{Asymptotic normality of the $MLE$ of $\theta$}
\label{subsubsec:mle_normality_ss_brief}
\begin{theorem}
\label{theorem:mle_normality_ss_brief}
Let the true, data-generating model be given by (\ref{eq:data_sde3}) and (\ref{eq:state_space_sde3}), but let
the data be modeled by (\ref{eq:data_sde4}) and (\ref{eq:state_space_sde4}).
Then, under suitable regularity conditions,
\begin{equation*}
\sqrt{n(b_T-a_T)}\left(\hat\theta_{n,T}-\theta_0\right)\stackrel{\mathcal L}{\longrightarrow}
N_d\left(0,{\mathcal I}^{-1}(\theta_0)\right),
\end{equation*}
as $n\rightarrow\infty$, $T\rightarrow\infty$.
In this case, the $(j,k)$-th element of the matrix
${\mathcal I}(\theta_0)$ is given by
\begin{equation*}
\left\{{\mathcal I}(\theta_0)\right\}_{jk}
=\bar K_Y\left[\frac{\partial\bar\psi_Y(\theta)}{\partial\theta_j}\frac{\partial\bar\psi_Y(\theta)}
{\partial\theta_k}\right]_{\theta=\theta_0}.
\end{equation*}
\end{theorem}
\begin{proof}[Sketch of the proof]
The proof of this result follows in the same way as that of Theorem \ref{theorem:mle_normality1_brief}.
\end{proof}

\subsubsection{Asymptotic posterior normality}
\label{subsubsec:verification_posterior_normality_ss}

We summarize our result on asymptotic posterior normality for systems of state space $SDE$s in the form of the following theorem.
\begin{theorem}
\label{theorem:theorem5_ss_brief}
Let the true, data-generating model be given by (\ref{eq:data_sde3}) and (\ref{eq:state_space_sde3}), but let
the data be modeled by (\ref{eq:data_sde4}) and (\ref{eq:state_space_sde4}).
Then denoting $\bar\Psi_{n,T}=\bar\Sigma^{-1/2}_{n,T}\left(\theta-\hat\theta_{n,T}\right)$, for each compact subset 
$B$ of $\mathbb R^d$ and each $\epsilon>0$, the following holds under appropriate regularity conditions:
\begin{equation*}
\lim_{n\rightarrow\infty,T\rightarrow\infty}P_{\theta_0}
\left(\sup_{\bar\Psi_{n,T}\in B}\left\vert\pi(\bar\Psi_{n,T}\vert\bar{\mathcal F}_{n,T})
-\varrho(\bar\Psi_T)\right\vert>\epsilon\right)=0.
\end{equation*}
\end{theorem}
\begin{proof}[Sketch of the proof]
In this case, $\ell_{n,T}(\theta)=\log L_{n,T}(\theta)$, 
can be uniformly approximated by 
\begin{equation*}
\frac{1}{n(b_T-a_T)}\bar\ell_{n,T}(\theta)=\bar g_{Y,T}(\theta)+\bar g_{X,T}(\theta)+\frac{1}{n(b_T-a_T)}\log \bar{p}_{nT}(\theta_0), 
\end{equation*}
for $\theta\in\Theta$.
The rest of the proof follows in the same way as that of Theorem \ref{theorem:theorem5_brief}.
\end{proof}

\section{Summary and discussion}
\label{sec:conclusion}

In this paper, we have investigated the asymptotic properties of the $MLE$ and the 
posterior distribution of the set of parameters associated with state space $SDE$s
and random effects state space $SDE$s. In particular, we have established posterior consistency
based on \ctn{Shalizi09} and asymptotic posterior normality based on \ctn{Schervish95}.
In addition, we have also established strong consistency and asymptotic normality of the $MLE$ 
associated with our state space $SDE$ models. Acknowledging the importance of discretization in
practical scenarios, we have shown (in Section \ref{sec:discrete_data} of the supplement) that our results go through even with 
discretized data.

In the case of our random effects $SDE$ models we only required independence of the state space models
for different individuals. That is, our approach and the results remain intact if the initial values
for the processes associated with the individuals are different. This is in contrast with the asymptotic 
works of \ctn{Maitra14a} and \ctn{Maitra14b} in the context of independent but non-identical 
random effects models for the individuals. Although not based on state space $SDE$s, their approach
required the simplifying assumption that the sequence of initial values is a convergent subsequence
of some sequence in some compact space. 

In fact, the relative simplicity of our current approach is due to the assumption of stochastic stability
of the latent processes of our models, the key concept that we adopted in our approach to alleviate 
the difficulties of the asymptotic problem at hand. Specifically, we adopted the conditions 
of Theorem 6.2 provided in \ctn{Mao11},
as sufficient conditions of our results. 
Indeed, there is a large literature on stochastic stability
of solutions of $SDE$s, with very many existing examples (see, for example, \ctn{Mao11} and the
references therein), which indicate that the assumption of stochastic
stability is not unrealistic. 

In our work we have assumed stochastic stability of $X$ only. If, in addition,
asymptotic stability of $Y$ is also 
assumed, then our results hold good by replacing (H3) in Section \ref{sec:assumptions}
with the following assumptions:
\begin{itemize}
\item[(H3(i))] $b_Y(0,0,t)=0=\sigma_Y(0,0,t)$ for all $t\geq 0$.
\item[(H3(ii))] For every $T>0$, there exist positive constants 
$K_{1,T}$, $K_{2,T}$, $\alpha_{1}$, $\alpha_{2}$, $\beta_{1}$ and $\beta_{2}$
such that for all $(x,t)\in\mathbb R\times [0,T]$,
$$K_{Y,1,T}\left(1-\alpha_{1}x^2-\beta_{1}y^2\right)\leq\frac{b^2_Y(y,x,t)}{\sigma^2_Y(y,x,t)}
\leq K_{Y,2,T}\left(1+\alpha_{2}x^2+\beta_{2}y^2\right),$$
where 
$K_{Y,1,T}\rightarrow K_Y$ and $K_{Y,2,T}\rightarrow K_Y$ and as $T\rightarrow\infty$; $K_Y$ being a 
positive constant as mentioned in (H3).
\end{itemize}
In this case the bounds of $\frac{b^2_Y(y,x,t)}{\sigma^2_Y(y,x,t)}$ are somewhat more general
than in (H3) in that they depend upon both $x$ and $y$, while in (H3) the bounds are independent of $y$.

To our knowledge, our work is the first time effort towards establishing asymptotic results 
in the context of state space $SDE$s, and the results we obtained are based on relatively
general assumptions which are satisfied by a large class of models. Since the notion of stochastic stability
is valid for any dimension of the associated $SDE$, it follows that our results admit straightforward
extension to high-dimensional state space $SDE$s. Corresponding results in the multidimensional 
extension of the random effects is provided briefly in Section \ref{sec:multidimensional} of the supplement.

As we mentioned in the introduction, our random effects state space $SDE$ model can not be interpreted 
as a {\it bona fide} random effects model from the classical perspective, and that introduction
of actual random effects would complicate our method of asymptotic investigation. 
Also, in this article we have assumed that the diffusion coefficients are free of parameters, which is
not a very realistic assumption. We are working on these issues currently, and will communicate our findings subsequently.

\section*{Acknowledgments}
We are sincerely grateful to the reviewer whose constructive suggestions have led to improved quality and presentation of the manuscript.
The first author gratefully acknowledges her NBHM Fellowship, Govt. of India.

\newpage

\renewcommand\thefigure{S-\arabic{figure}}
\renewcommand\thetable{S-\arabic{table}}
\renewcommand\thesection{S-\arabic{section}}

\setcounter{section}{0}
\setcounter{theorem}{0}

\begin{center}
{\LARGE\bf Supplementary Material}
\end{center}

Throughout, we refer to our main manuscript 
as MB. 

\section{Assumptions and theorem of Shalizi in the context of our state space $SDE$}
\label{sec:assumptions_shalizi}

\begin{itemize}
\item[(A1)] Consider the following likelihood ratio:
\begin{equation}
R_T(\theta)=\frac{L_T(\theta)}{p_T(\theta_0)}.
\label{eq:R_T}
\end{equation}
Assume that $R_T(\theta)$ is $\mathcal F_T\times \mathcal T$-measurable for all $T>0$.
\end{itemize}

\begin{itemize}
\item[(A2)] For each $\theta\in\Theta$, the generalized or relative asymptotic equipartition property holds, and so,
almost surely,
\begin{equation*}
\underset{T\rightarrow\infty}{\lim}~\frac{1}{b_T-a_T}\log R_T(\theta)=-h(\theta),
\end{equation*}
where $h(\theta)$ is given in (A3) below.
\end{itemize}

\begin{itemize}
\item[(A3)] For every $\theta\in\Theta$, the Kullback-Leibler divergence rate
\begin{equation}
h(\theta)=\underset{T\rightarrow\infty}{\lim}~\frac{1}{b_T-a_T}E_{\theta_0}\left(\log\frac{p_T(\theta_0)}{L_T(\theta)}\right).
\label{eq:A2}
\end{equation}
exists (possibly being infinite) and is $\mathcal T$-measurable. In (\ref{eq:A2}, $E_{\theta_0}$ stands for the expectation with respect to the true model.)
\end{itemize}

\begin{itemize}
\item[(A4)] 
Let $I=\left\{\theta:h(\theta)=\infty\right\}$. 
The prior $\pi$ satisfies $\pi(I)<1$.
\end{itemize}

Following the notation of \ctn{Shalizi09}, for $A\subseteq\Theta$, let
\begin{align}
h\left(A\right)&=\underset{\theta\in A}{\mbox{ess~inf}}~h(\theta);\notag\\
J(\theta)&=h(\theta)-h(\Theta);\notag\\
J(A)&=\underset{\theta\in A}{\mbox{ess~inf}}~J(\theta).\notag
\end{align}
\begin{itemize}
\item[(A5)] There exists a sequence of sets $\mathcal G_T\rightarrow\Theta$ as $T\rightarrow\infty$ 
such that: 
\begin{enumerate}
\item[(1)]
\begin{equation}
\pi\left(\mathcal G_T\right)\geq 1-\alpha\exp\left(-\beta (b_T-a_T)\right),~\mbox{for some}~\alpha>0,~\beta>2h(\Theta);
\label{eq:A5_1}
\end{equation}
\item[(2)]The convergence in (A2) is uniform in $\theta$ over $\mathcal G_T\setminus I$.
\item[(3)] $h\left(\mathcal G_T\right)\rightarrow h\left(\Theta\right)$, as $T\rightarrow\infty$.
\end{enumerate}
\end{itemize}
For each measurable $A\subseteq\Theta$, for every $\delta>0$, there exists a random natural number $\tau(A,\delta)$
such that
\begin{equation*}
(b_T-a_T)^{-1}\log\int_{A}R_T(\theta)\pi(\theta)d\theta
\leq \delta+\underset{(b_T-a_T)\rightarrow\infty}{\lim\sup}~(b_T-a_T)^{-1}
\log\int_{A}R_T(\theta)\pi(\theta)d\theta,
\end{equation*}
for all $(b_T-a_T)>\tau(A,\delta)$, provided 
$\underset{(b_T-a_T)\rightarrow\infty}{\lim\sup}~(b_T-a_T)^{-1}\log\pi\left(\mathbb I_A R_T\right)<\infty$.
Regarding this, the following assumption has been made by Shalizi:
\begin{itemize}
\item[(A6)] The sets $\mathcal G_T$ of (A5) can be chosen such that for every $\delta>0$, the inequality
$(b_T-a_T)>\tau(\mathcal G_T,\delta)$ holds almost surely for all sufficiently large $T$.
\end{itemize}
\begin{itemize}
\item[(A7)] The sets $\mathcal G_T$ of (A5) and (A6) can be chosen such that for any set $A$ with $\pi(A)>0$, 
\begin{equation*}
h\left(\mathcal G_T\cap A\right)\rightarrow h\left(A\right),
\end{equation*}
as $T\rightarrow\infty$.
\end{itemize}
Under the above assumptions, the following versions of the results of \ctn{Shalizi09} can be seen to hold.

\begin{theorem}[\ctn{Shalizi09}]
Consider assumptions (A1)--(A7) and any set $A\in\mathcal T$ with $\pi(A)>0$ and $h\left(A\right)>h\left(\Theta\right)$. 
Then almost surely,
\begin{equation*}
\underset{T\rightarrow\infty}{\lim}~\pi(A|\mathcal F_T)=0,
\end{equation*}
where $\pi(\cdot|\mathcal F_T)$ is the posterior distribution given 
$\mathcal F_T=\sigma\left(\left\{Y_s;~s\in[a_T,b_T]\right\}\right)$.
If $\beta>2h(A)$ or $A\subset\cap_{k=T}^{\infty}\mathcal G_k$, for some $T$, where
$\beta$ is given in (\ref{eq:A5_1}) under assumption (A5), then we further have, almost surely,
\begin{equation*}
\underset{T\rightarrow\infty}{\lim}~\frac{1}{b_T-a_T}\log\pi(A|\mathcal F_T)=-J(A).
\end{equation*}
\end{theorem}

\section{Proof of Lemma \ref{lemma:marginal_likelihood}}
\label{sec:proof_lemma_1}
We only need to verify that for any measurable and integrable function 
$g_T:\mathfrak Y_T\mapsto\mathbb R$,
$E_{T,X}\left[E_{T,Y|X}\left\{g_T(Y)\right\}\right]=E_{T,Y}\left[g(Y)\right]$,
where 
$\mathfrak Y_T$ denotes the sample space of $\left\{Y(t):~t\in[a_T,b_T]\right\}$,
$E_{T,X}$ denotes the marginal expectation with respect to the Girsanov formula based density
dominated by $Q_{T,X}$, $E_{T,Y|X}$ is the conditional
expectation with respect to the Girsanov formula based conditional density dominated by $Q_{T,Y|X}$, 
and $E_{T,Y}$ stands for the marginal expectation with respect
the proposed density $p_T(\theta_0)$ and the proposed dominating measure $Q_{T,Y}$. All the quantities
are associated with $[a_T,b_T]$.
Note that $E_{T,X}\left[E_{T,Y|X}\left\{|g_T(Y)|\right\}\right]<\infty$ if and only if
$E_{T,Y}\left[|g_T(Y)|\right]<\infty$, which easily follows from Tonelli's theorem related to
interchange of orders of integration for non-negative integrands.

Now, due to 
Fubini's theorem, for such integrable measurable function $g$,
\begin{align}
&E_{T,Y}\left[g_T(Y)\right]\notag\\
&=\int_{\mathfrak Y_T} g_T(y)p_T(\theta_0)dQ_{T,Y}\notag\\
&=\int_{\mathfrak Y_T} g_T(y)
\left[\int_{\mathfrak X_T}\exp\left(\phi_{Y,0} u_{Y|X,T}-\frac{\phi^2_{Y,0}}{2}v_{Y|X,T}\right)
\times \exp\left(\phi_{X,0} u_{X,T}-\frac{\phi^2_{X,0}}{2}v_{X,T}\right)dQ_{T,X}\right]dQ_{T,Y}\notag\\
& =\int_{\mathfrak X_T} 
\left[\int_{\mathfrak Y_T}\int_{\mathfrak X_T}g_T(y)\exp\left(\phi_{Y,0} u_{Y|X,T}
-\frac{\phi^2_{Y,0}}{2}v_{Y|X,T}\right)\right.\notag\\
&\qquad\qquad\qquad\left.
\times \exp\left(\phi_{X,0} u_{X,T}-\frac{\phi^2_{X,0}}{2}v_{X,T}\right)dQ_{T,X}dQ_{T,Y|X}\right]dQ_{T,X}\notag\\
&=\int_{\mathfrak X_T} 
\left[\int_{\mathfrak X_T}\left\{\int_{\mathfrak Y_T}g_T(y)\exp\left(\phi_{Y,0} u_{Y|X,T}-\frac{\phi^2_{Y,0}}{2}v_{Y|X,T}\right)
dQ_{T,Y|X}\right\}\right.\notag\\
&\qquad\qquad\qquad\left.
\times \exp\left(\phi_{X,0} u_{X,T}-\frac{\phi^2_{X,0}}{2}v_{X,T}\right)dQ_{T,X}\right]dQ_{T,X}\notag\\
&=\int_{\mathfrak X_T}
\left[\int_{\mathfrak X_T}E_{T,Y|X}\left\{g_T(Y)\right\}
\times \exp\left(\phi_{X,0} u_{X,T}-\frac{\phi^2_{X,0}}{2}v_{X,T}\right)
dQ_{T,X}\right]dQ_{T,X}\notag\\
&= \int_{\mathfrak X_T}E_{T,X}\left[E_{T,Y|X}\left\{g_T(Y)\right\}\right]dQ_{T,X}\notag\\
&=E_{T,X}\left[E_{T,Y|X}\left\{g_T(Y)\right\}\right],
\label{eq:marginal2}
\end{align}
since $\int_{\mathfrak X_T}dQ_{T,X}=1$ as $Q_{T,X}$ is a probability measure.
In particular, letting $g_T(y)=\mathbb I_{\mathfrak Y_T}(y)$, where for any set $A$, $\mathbb I_A$ denotes the
indicator function of $A$, the right hand side of (\ref{eq:marginal2}) becomes $1$, showing that
$p_T(\theta_0)$ is the correct density with respect to $Q_{T,Y}$.

\section{Proof of Lemma \ref{lemma:asymp_equal}}
\label{sec:proof_lemma_2}

Since the proofs of 
(\ref{eq:exp_I_Y_as}) and (\ref{eq:exp_I_X_as}) 
are the same, we provide the proof
of 
(\ref{eq:exp_I_Y_as}) only.

Consider the sequence 
$\delta_T\downarrow 0$ introduced in (H9). Then due to continuity of the exponential function,
\begin{align}
&P\left(\left|\exp\left(I_{Y,X,T}\right)
-\exp\left(\sqrt{K_Y}\left(W_Y(b_T)-W_Y(a_T)\right)\right)\right|>\epsilon\right)\notag\\
&\qquad\leq P\left(\left|I_{Y,X,T}-\sqrt{K_Y}\left(W_Y(b_T)-W_Y(a_T)\right)\right|>\delta_T\right)\label{eq:exp_as1}\\
&\quad\qquad+P\left(\left|I_{Y,X,T}-\sqrt{K_Y}\left(W_Y(b_T)-W_Y(a_T)\right)\right|\leq \delta_T,\right.\notag\\
&\qquad\qquad\left.~\left|\exp\left(I_{Y,X,T}\right)
-\exp\left(\sqrt{K_Y}\left(W_Y(b_T)-W_Y(a_T)\right)\right)\right|>\epsilon\right).
\label{eq:exp_as2}
\end{align}
The choice of $\delta_T\downarrow 0$ guarantees via (H9) that the terms (\ref{eq:exp_as2})
yield a convergent sum.

We now turn attention to $P\left(\left|I_{Y,X,T}-\sqrt{K_Y}\left(W_Y(b_T)-W_Y(a_T)\right)\right|>\delta_T\right)$.
Note that, almost surely, it holds due to (H3) and 
(\ref{eq:ss1}), 
that 
\begin{align}
\left|\frac{b_{Y}(Y(s),X(s),s)}{\sigma_{Y}(Y(s),X(s),s)}-\sqrt{K_Y}\right|
&\leq \left|\frac{b_{Y}(Y(s),X(s),s)}{\sigma_{Y}(Y(s),X(s),s)}\right|+\sqrt{K_Y}\notag\\
&\leq\sqrt{K_{Y,2,T}}\sqrt{1+\alpha_{Y,2,T}\xi^2\lambda^2(s)}+\sqrt{K_Y}\notag\\
&\leq 2\max\left\{\sqrt{K_{Y,2,T}},\sqrt{K_Y}\right\}\sqrt{1+\alpha_{Y,2,T}\xi^2\lambda^2(s)}.
\label{eq:bound_for_normal}
\end{align}
Now, noting the fact that for $k_0\geq 1$, $\lambda^{2k_0}(s)\leq \lambda^2(s)$ since $\lambda(t)\rightarrow 0$ as $t\rightarrow\infty$ and (H9), it holds due to (\ref{eq:bound_for_normal}), that 
\begin{align}
& \delta^{-2k_0}_T \left(b_T-a_T\right)^{k_0-1}
E\int_{a_T}^{b_T}\left|\frac{b_{Y}(Y(s),X(s),s)}{\sigma_{Y}(Y(s),X(s),s)}-\sqrt{K_Y}\right|^{2k_0}ds\notag\\
&\qquad\leq 2^{2k_0}\max\left\{K_{Y,2,T}^{k_0},K^{k_0}_Y\right\}
\delta^{-2k_0}_T \left(b_T-a_T\right)^{k_0-1}
E\int_{a_T}^{b_T}\left(1+\alpha_{Y,2,T}\xi^2\lambda^2(s)\right)^{k_0}ds
\label{eq:bound0}\\
&\qquad<\infty. ~~\mbox{(due to (H9))}\label{eq:bound1}
\end{align}

Due to (\ref{eq:bound1}) it follows that (see, Theorem 7.1 of \ctn{Mao11}, page 39)
\begin{align}
& \delta^{-2k_0}_T \left(b_T-a_T\right)^{k_0-1}
E\left|\int_{a_T}^{b_T}\left[\frac{b_Y(Y(s),X(s),s)}{\sigma_Y(Y(s),X(s),s)}-\sqrt{K_Y}\right]dW_Y(s)\right|^{2k_0}\notag\\
&\qquad\leq\left(k_0(2k_0-1)\right)^{k_0}
\delta^{-2k_0}_T \left(b_T-a_T\right)^{k_0-1}
E\int_{a_T}^{b_T}\left|\frac{b_Y(Y(s),X(s),s)}{\sigma_Y(Y(s),X(s),s)}-\sqrt{K_Y}\right|^{2k_0}ds.
\label{eq:for_chebychev}
\end{align}
Hence, using Chebychev's inequality, it follows using (\ref{eq:for_chebychev}) that for any $\epsilon>0$,
\begin{align}
&P\left(\left|\int_{a_T}^{b_T}
\left[\frac{b_Y(Y(s),X(s),s)}{\sigma_Y(Y(s),X(s),s)}-\sqrt{K_Y}\right]dW_Y(s)\right|>\delta_T\right)\notag\\
&\qquad<\left(k_0(2k_0-1)\right)^{k_0}\delta^{-2k_0}_T \left(b_T-a_T\right)^{k_0-1}
E\int_{a_T}^{b_T}\left|\frac{b_Y(Y(s),X(s),s)}{\sigma_Y(Y(s),X(s),s)}-\sqrt{K_Y}\right|^{2k_0}ds.
\end{align}
Using (\ref{eq:bound0}) and (H9), it follows that
\begin{equation}
\sum_{T=1}^{\infty}P\left(\left|\int_{a_T}^{b_T}\left[\frac{b_Y(Y(s),X(s),s)}
{\sigma_Y(Y(s),X(s),s)}-\sqrt{K_Y}\right]dW_Y(s)\right|>\delta_T\right)
<\infty.
\label{eq:exp_as4}
\end{equation}
Combining 
(\ref{eq:exp_as3}) of (H9) and (\ref{eq:exp_as4}) it follows that for all $\epsilon>0$,
\begin{equation*}
\sum_{T=1}^{\infty}P\left(\left|\exp\left(I_{Y,X,T}\right)
-\exp\left(\sqrt{K_Y}\left(W_Y(b_T)-W_Y(a_T)\right)\right)\right|>\epsilon\right)
<\infty,
\end{equation*}
proving that
$$\exp\left(I_{Y,X,T}\right)-\exp\left(\sqrt{K_Y}\left(W_Y(b_T)-W_Y(a_T)\right)\right)\stackrel{a.s.}{\longrightarrow} 0.$$

\section{Proof of Theorem \ref{theorem:asymp_approx}}
\label{sec:proof_theorem_asymp_approx}
Since the proofs of 
(\ref{eq:p_T_asymp}) and (\ref{eq:L_T_asymp}) 
are similar, we prove only 
(\ref{eq:p_T_asymp}).

By Lemma 
\ref{lemma:asymp_equal},
\begin{equation*}
\frac{\exp\left(I_{Y,X,T}\right)-\exp\left(\sqrt{K_Y}\left(W_Y(b_T)-W_Y(a_T)\right)\right)}
{\exp\left(\sqrt{K_Y}\left(W_Y(b_T)-W_Y(a_T)\right)\right)}
\stackrel{a.s.}{\longrightarrow} 0, 
\end{equation*}
so that
\begin{equation}
\frac{\exp\left(I_{Y,X,T}\right)}{\exp\left(\sqrt{K_Y}\left(W_Y(b_T)-W_Y(a_T)\right)\right)}\stackrel{a.s.}{\longrightarrow} 1.
\label{eq:asymp_approx2}
\end{equation}
Similarly,
\begin{equation}
\frac{\exp\left(I_{X,T}\right)}{\exp\left(\sqrt{K_X}\left(W_X(b_T)-W_X(a_T)\right)\right)}\stackrel{a.s.}{\longrightarrow} 1.
\label{eq:asymp_approx3}
\end{equation}
By (H3) and (H7), $\left((b_T-a_T)|K_{Y,j,T}-K_Y|\right)\rightarrow 0$ and 
$\left((b_T-a_T)|K_{X,j,T}-K_X|\right)\rightarrow 0$,
for $j=1,2$. 
Hence, it also holds that $\left((b_T-a_T)|K_{Y,1,T}-K_{Y,2,T}|\right)\rightarrow 0$
and $\left((b_T-a_T)|K_{X,1,T}-K_{X,2,T}|\right)\rightarrow 0$.
Also, by (H9), $\int_{a_T}^{b_T}\lambda^2(s)ds\rightarrow 0$.
Hence, it follows, as $\xi$ is a finite random variable, that
\begin{align}
&\frac{\exp\left(\phi^2_{Y,0}K_{Y,\xi,T,2}-\frac{\phi^2_{Y,0}K_{Y,\xi,T,1}}{2}\right)}
{\exp\left(\frac{(b_T-a_T)K_Y\phi^2_{Y,0}}{2}\right)}\notag\\
&=\exp\left[\frac{\phi^2_{Y,0}}{2}(b_T-a_T)(K_{Y,2,T}-K_Y)+\frac{\phi^2_{Y,0}}{2}(b_T-a_T)(K_{Y,2,T}-K_{Y,1,T})\right.\notag\\
&\qquad\qquad\left.+\frac{\phi^2_{Y,0}}{2}
\left(K_{Y,2,T}\alpha_{Y,2}-K_{Y,1,T}\alpha_{Y,1}\right)\xi^2\int_{a^T}^{b_T}\lambda^2(s)ds\right]\notag\\
&\stackrel{a.s.}{\longrightarrow} 1,
\label{eq:asymp_approx4}
\end{align}
and, similarly,
\begin{align}
&\frac{\exp\left(\phi^2_{X,0}K_{X,\xi,T,2}-\frac{\phi^2_{X,0}K_{X,\xi,T,1}}{2}\right)}
{\exp\left(\frac{(b_T-a_T)K_X\phi^2_{X,0}}{2}\right)}
\stackrel{a.s.}{\longrightarrow} 1,
\label{eq:asymp_approx5}
\end{align}
as $T\rightarrow\infty$.

Now, let
\begin{align}
\hat Z_{T,\theta_0}(W_X)
&=\exp\left(\frac{(b_T-a_T)K_Y\phi^2_{Y,0}}{2}+\phi_{Y,0}\sqrt{K_Y}\left(W_Y(b_T)-W_Y(a_T)\right)\right)\notag\\
&\qquad\qquad\times\exp\left(\frac{(b_T-a_T)K_X\phi^2_{X,0}}{2}+\phi_{X,0}\sqrt{K_X}\left(W_X(b_T)-W_X(a_T)\right)\right).\notag
\end{align}
From (\ref{eq:asymp_approx2}), (\ref{eq:asymp_approx3}), (\ref{eq:asymp_approx4}) and (\ref{eq:asymp_approx5})
it follows that $Z_{U,T,\theta_0}(X)/\hat Z_{T,\theta_0}(W_X)\rightarrow 1$, almost surely with respect to $W_X$ and $X$, 
as $T\rightarrow\infty$, given any fixed $W_Y$ in the respective non-null set.
That is, given any sequences $\left\{Z_{U,T,\theta_0}(X):~T>0\right\}$ and 
$\left\{\hat Z_{T,\theta_0}(W_X):~T>0\right\}$ associated with the complement
of null sets, 
for any $\epsilon>0$, there exists $T_0(\epsilon,W_Y,W_X)>0$ such that for $T\geq T_0(\epsilon,W_Y,W_X)$,
\begin{equation}
(1-\epsilon)\hat Z_{T,\theta_0}(W_X)\leq Z_{U,T,\theta_0}(X)\leq (1+\epsilon)\hat Z_{T,\theta_0}(W_X),
\end{equation}
for almost all $X$.
Thus, letting $g_{T,\theta_0}(W_X)=E\left[Z_{U,T,\theta_0}(X)|W_X\right]$, it follows that 
$g_{T,\theta_0}(W_X)-\hat Z_{T,\theta_0}(W_X)\stackrel{a.s.}{\longrightarrow}0$.
In fact, $$\frac{g_{T,\theta_0}(W_X)-\hat Z_{T,\theta_0}(W_X)}{\hat p_T(\theta_0)}\stackrel{a.s.}{\longrightarrow}0.$$
By (H10),
$$\underset{T>0}{\sup}~E\left(\frac{\left|Z_{U,T,\theta_0}(X)-\hat Z_{T,\theta_0}(W_X)\right|}{\hat p_T(\theta_0)}\right)
\leq\underset{T>0}{\sup}~E\left(\frac{Z_{U,T,\theta_0}(X)}{\hat p_T(\theta_0)}\right)+1<\infty.$$
Minor modification of Lemma B.119 of \ctn{Schervish95} then guarantee that 
$\frac{g_{T,\theta_0}(W_X)-\hat Z_{T,\theta_0}(W_X)}{\hat p_T(\theta_0)}$ is uniformly integrable.
Hence,
$$E\left(\frac{g_{T,\theta_0}(W_X)-\hat Z_{T,\theta_0}(W_X)}{\hat p_T(\theta_0)}\right)\rightarrow 0,$$
so that
$$E\left(\frac{g_{T,\theta_0}(W_X)}{\hat p_T(\theta_0)}\right)\rightarrow 1,~
\mbox{as}~ T\rightarrow\infty.$$
In other words, for almost all $W_Y$,
\begin{equation}
\frac{B_{U,T}(\theta_0)}{\hat p_T(\theta_0)}\rightarrow 1,~
\mbox{as}~ T\rightarrow\infty.
\label{eq:B_U_T_asymp}
\end{equation}
In the same way it follows that for almost all $W_Y$,
\begin{equation}
\frac{B_{L,T}(\theta_0)}{\hat p_T(\theta_0)}\rightarrow 1,~
\mbox{as}~ T\rightarrow\infty.
\label{eq:B_L_T_asymp}
\end{equation}
Combining (\ref{eq:B_U_T_asymp}) and (\ref{eq:B_L_T_asymp}) it follows that
$p_T(\theta_0)\sim\hat p_T(\theta_0)$ for almost all $W_Y$.

\section{Asymptotics in random effects models based on state space $SDE$s}
\label{sec:random_effects}

We make the following extra assumptions for investigating the asymptotic theory associated with (\ref{eq:data_sde3}), (\ref{eq:state_space_sde3}), (\ref{eq:data_sde4})
and (\ref{eq:state_space_sde4}).
\begin{itemize}
\item[(H15)]
For every $\theta\in\Theta\cup\{\theta_0\}$, $\psi_{Y_i}(\theta)$ and $\psi_{X_i}(\theta)$ are finite for 
all $i=1,\ldots,n$. And
\begin{align}
\psi_{Y_i}(\theta)&\rightarrow \bar\psi_Y(\theta);\notag\\
\psi_{X_i}(\theta)&\rightarrow \bar\psi_X(\theta),\notag
\end{align}
as $i\rightarrow\infty$, for all $\theta\in\Theta$. 
Also,
\begin{align}
K_{Y,i}&\rightarrow \bar K_Y;\notag\\
K_{X,i}&\rightarrow \bar K_X,\notag
\end{align}
as $i\rightarrow\infty$.
\item[(H16)] Assume that $\bar\psi_Y$ and $\bar\psi_X$ satisfy (H11) (i) -- (v).
\end{itemize}

\subsection{True likelihood}
\label{subsec:state_space_likelihood_bounds}

Here the true likelihood on $[a_{T},b_{T}]$ is given by 
\begin{equation}
\bar p_{n,T}(\theta_0)=\prod_{i=1}^np_{T,i}(\theta_0), 
\label{eq:true_likelihood_ss}
\end{equation}
where
\begin{align}
 p_{T,i}(\theta_0)&=\int \exp\left(\phi_{Y_i,0} u_{Y_i|X_i,T}-\frac{\phi^2_{Y_i,0}}{2}v_{Y_i|X_i,T}\right)
\times \exp\left(\phi_{X_i,0} u_{X_i,T}-\frac{\phi^2_{X_i,0}}{2}v_{X_i,T}\right)dQ_{T,X_i}.\notag 
\end{align}

It follows as in Section \ref{subsubsec:bounds_L_T} that
$\bar p_{n,T}(\theta_0)\stackrel{a.s.}{\sim}\prod_{i=1}^n\hat p_{T,i}(\theta_0)$, where
\begin{equation*}
\hat p_{T,i}(\theta_0)
=\exp\left(\frac{(b_{T}-a_{T})K_{Y_i}\phi^2_{Y_i,0}}{2}+\phi_{Y_i,0}\sqrt{K_{Y_i}}
\left(W_{Y_i}(b_{T})-W_{Y_i}(a_{T})\right)
+(b_{T}-a_{T})K_{X_i}\phi^2_{{X_i},0}\right).
\end{equation*}

\subsection{Modeled likelihood}
\label{subsec:modeled_likelihood_ss}
The modeled likelihood in this setup is given by
$\bar L_{n,T}(\theta)=\prod_{i=1}^nL_{T,i}(\theta)$, where
\begin{align}
L_{T,i}(\theta)&=\int \exp\left(\phi_{Y_i} u_{Y_i|X_i,T}-\frac{\phi^2_{Y_i}}{2}v_{Y_i|X_i,T}\right)
\times \exp\left(\phi_{X_i} u_{X_i,T}-\frac{\phi^2_{X_i}}{2}v_{X_i,T}\right)dQ_{T,X_i}. \notag 
\end{align}

As in Section \ref{subsec:modeled_likelihood} here it holds that 
$\bar L_{n,T}(\theta)\stackrel{a.s.}{\sim}\prod_{i=1}^n\hat L_{T,i}(\theta)$,
where
\begin{align}
\hat L_{T,i}(\theta)
&=\exp\left((b_T-a_T)K_{Y_i}\phi_{Y_i}\phi_{Y_i,0}+\phi_{Y_i}\sqrt{K_{Y_i}}\left(W_{Y_i}(b_T)-W_{Y_i}(a_T)\right)\right.\notag\\
&\qquad\qquad\left.-\frac{(b_T-a_T)K_{Y_i}\phi^2_{Y_i}}{2}+(b_T-a_T)K_{X_i}\phi_{X_i}\phi_{X_i,0}\right).\notag
\end{align}

\subsection{Bayesian consistency}
\label{subsec:bayesian_consistency_ss}

We now proceed to verify the assumptions of \ctn{Shalizi09}. First note that $L_{T,i}(\theta)$ is measurable
with respect to $\mathcal F_{T,i}\times \mathcal T$, where 
$\mathcal F_{T,i}=\sigma\left(\left\{Y_{is};s\in [a_T,b_T]\right\}\right)$,
the smallest $\sigma$-algebra with respect to which $\left\{Y_{is};s\in [a_T,b_T]\right\}$ is measurable. 
Let $\bar{\mathcal F}_{n,T}=\sigma\left(\left\{Y_{is};i=1,\ldots,n;~s\in [a_T,b_T]\right\}\right)$. 
Then for each $i=1,\ldots,n$, $L_{T,i}(\theta)$ is also $\bar{\mathcal F}_{n,T}\times \mathcal T$-measurable. It follows that
the likelihood
$\bar L_{n,T}(\theta)=\prod_{i=1}^nL_{T,i}(\theta)$
is measurable with respect to $\bar{\mathcal F}_{n,T}\times \mathcal T$. Hence, (A1) holds. 

Let $\bar R_{n,T}(\theta)=\frac{\bar L_{n,T}(\theta)}{\bar p_{n,T}(\theta_0)}$.
Then
\begin{equation*}
\frac{1}{n(b_T-a_T)}\log \bar R_{n,T}(\theta)=\frac{1}{n(b_T-a_T)}\sum_{i=1}^n\log R_{T,i},
\end{equation*}
where 
\begin{equation*}
R_{T,i}=\frac{L_{T,i}(\theta)}{p_{T,i}(\theta_0)}.
\end{equation*}

Since 
\begin{align}
\frac{1}{b_T-a_T}\log R_{T,i}(\theta)&\rightarrow 
-\frac{1}{2}\left[K_{Y,i}\left(\psi_{Y,i}(\theta)-\psi_{Y,i}(\theta_0)\right)^2+
K_{X,i}\left(\psi_{X,i}(\theta)-\psi_{X,i}(\theta_0)\right)^2\right.\notag\\
&\qquad\qquad\left.+K_X\left(\psi^2_{X,i}(\theta_0)-\psi^2_{X,i}(\theta)\right)\right]
\notag
\end{align}
for each $i$ as $T\rightarrow\infty$, it follows, using (H15), that
\begin{equation*}
\underset{n\rightarrow\infty}{\lim}\underset{T\rightarrow\infty}{\lim}
~\frac{1}{n(b_T-a_T)}\log \bar R_{n,T}(\theta) = -\bar h(\theta),
\end{equation*}
almost surely, where
\begin{equation*}
\bar h(\theta) = \frac{1}{2}\left[\bar K_{Y}\left(\bar\psi_{Y}(\theta)-\bar\psi_{Y}(\theta_0)\right)^2+
\bar K_{X}\left(\bar\psi_{X}(\theta)-\bar\psi_{X}(\theta_0)\right)^2
+\bar K_{X}\left(\bar\psi^2_{X}(\theta_0)-\bar\psi^2_{X}(\theta)\right)\right].
\end{equation*}
Thus, (A2) holds, and 
noting that $E\left(W_{Y_i}(b_T)-W_{Y,i}(a_T)\right)=0$,
it is easy to see that (A3) also holds.

We define, in our current context, the following:
\begin{align}
\bar h\left(A\right)&=\underset{\theta\in A}{\mbox{ess~inf}}~\bar h(\theta);\label{eq:h2_ss}\\
\bar J(\theta)&=\bar h(\theta)-\bar h(\Theta);\label{eq:J_ss}\\
\bar J(A)&=\underset{\theta\in A}{\mbox{ess~inf}}~\bar J(\theta).\label{eq:J2_ss}
\end{align}
The way of verification of (A4) remains the same as in Section \ref{subsubsec:A4}, with 
$I=\left\{\theta:\bar h(\theta)=\infty\right\}$. 
To verify (A5) (i) we define $\bar{\mathcal G}_{n,T}=\left\{\theta:\left|\bar\psi_Y(\theta)\right|
\leq\exp\left(\bar\beta n(b_T-a_T)\right)\right\}$, where $\bar\beta>2\bar h\left(\Theta\right)$. Coerciveness
of $\bar\psi_Y$ ensures compactness of $\bar{\mathcal G}_{n,T}$, and clearly, $\bar{\mathcal G}_{n,T}\rightarrow\Theta$,
as $n,T\rightarrow\infty$. Moreover,
$$\pi\left(\bar{\mathcal G}_{n,T}\right)>1-\bar\alpha\exp\left(-\bar\beta n(b_T-a_T)\right),$$
where $0<\bar\alpha=E\left(\left|\bar\psi_Y(\theta)\right|\right)<\infty$.
Verification of (A5) (ii) follows in the same way as in Section \ref{subsubsec:A5_2}, assuming (H10) 
holds for every $i$, and (A5) (iii) holds in the same way as in Section \ref{subsubsec:A5_3} with
$h$ replaced with $\bar h$ and $\mathcal G_T$ replaced with $\bar{\mathcal G}_{n,T}$.
Similarly as in Section \ref{subsubsec:A6} (A6) holds by additionally replacing
$R_T$ and $R_m$ with $\bar R_{n,T}$ and $\bar R_{n,m}$, respectively. Now, here
 $Z_m=\frac{1}{n(b_m-a_m)}\log\bar R_{n,m}(\hat\theta_T)+\bar h(\hat\theta_T)$ and
$\tilde Z_m=\sqrt{\bar K_Y}\frac{\left(\bar\psi_Y(\hat\theta_T)-\bar\psi_Y(\theta_0)\right)}{n\sqrt{b_m-a_m}}
\times\sum_{i=1}^n\frac{ W_{Y_i}(b_m)-W_{Y_i}(a_m)}{\sqrt{b_m-a_m}}$.

Note that
\begin{align}
\frac{\tilde Z^{8}_m}{E\left(\tilde Z^{8}_m\right)}=\frac{\left(\sum_{i=1}^n\frac{ W_{Y_i}(b_m)-W_{Y_i}(a_m)}{\sqrt{b_m-a_m}}\right)^8}{E\left(\left[\sum_{i=1}^n\frac{ W_{Y_i}(b_m)-W_{Y_i}(a_m)}{\sqrt{b_m-a_m}}\right]^6\right)}=\frac{\left(\sum_{i=1}^n\frac{W_{Y_i}(b_m)-W_{Y_i}(a_m)}{\sqrt{n(b_m-a_m)}}\right)^8}{E\left(\left[\sum_{i=1}^n\frac{ W_{Y_i}(b_m)-W_{Y_i}(a_m)}{\sqrt{n(b_m-a_m)}}\right]^8\right)}.\notag
\end{align}

Hence, even in this case,
\begin{align}
\frac{Z^{8}_m-\tilde Z^{8}_m}{E\left(\tilde Z^{8}_m\right)}
&=\frac{Z^{8}_m-\tilde Z^{8}_m}{\tilde Z^{8}_m}
\times\frac{\tilde Z^{8}_m}{E\left(\tilde Z^{8}_m\right)}
\stackrel{a.s.}{\longrightarrow}0~~\mbox{as}~~m\rightarrow\infty,
\label{eq:A6_33}
\end{align}
where the first factor on the right hand side of (\ref{eq:A6_33})
tends to zero almost surely as in Section \ref{subsubsec:A6}, while by the fact that 
$\frac{1}{\sqrt n}\sum_{i=1}^n\frac{W_{Y_i}(b_T)-W_{Y_i}(b_T)}{\sqrt{b_T-a_T}}\sim N(0,1)$, the second factor is bounded above 
by a constant times standard normal distribution raised to the power $6$. 
The rest of the verification is the same as in Section \ref{subsubsec:A6}.
It is also easy to see that (A7) holds, as in Section \ref{subsubsec:A7}.

We summarize our results in the form of the following theorem.
\begin{theorem}
\label{theorem:bayesian_convergence_ss}
Let the true, data-generating model be given by (\ref{eq:data_sde3}) and (\ref{eq:state_space_sde3}), but let
the data be modeled by (\ref{eq:data_sde4}) and (\ref{eq:state_space_sde4}).
Assume that (H1)--(H10) hold (for each $i=1,\ldots,n$, whenever appropriate); also assume (H13) -- (H16). 
Consider any set $A\in\mathcal T$ with $\pi(A)>0$ and $\bar h\left(A\right)>\bar h\left(\Theta\right)$. Then almost surely,
\begin{equation*}
\underset{n\rightarrow\infty,~T\rightarrow\infty}{\lim}~\pi(A|\bar{\mathcal F}_{n,T})=0.
\end{equation*}
Moreover, if $\bar\beta>2\bar h(A)$, then almost surely,  
\begin{equation*}
\underset{n\rightarrow\infty,~T\rightarrow\infty}{\lim}~\frac{1}{n(b_T-a_T)}\log\pi(A|\bar{\mathcal F}_{n,T})=-\bar J(A).
\end{equation*}
\end{theorem}

\subsection{Strong consistency and asymptotic normality of the maximum likelihood estimator of $\theta$}
\label{subsec:mle_strong_consistency_ss}

We now replace (H16) with 
\begin{itemize}
\item[(H16$^\prime$)] Assume that $\bar\psi_Y$ and $\bar\psi_X$ satisfy (H11) (i) -- (vii).
\end{itemize}
Let 
\begin{align}
\bar g_{Y,T}(\theta)&=-\frac{\bar K_Y}{2}\left(\bar\psi_Y(\theta)-\bar\psi_Y(\theta_0)\right)^2
+\sqrt{\bar K_Y}\left(\bar\psi_Y(\theta)-\bar\psi_Y(\theta_0)\right)\frac{1}{n}
\sum_{i=1}^n\frac{W_{Y_i}(b_T)-W_{Y_i}(a_T)}{b_T-a_T};\label{eq:g_Y_ss}\\
\bar g_{X,T}(\theta)&=-\frac{\bar K_X}{2}\left(\bar\psi_X(\theta)-\bar\psi_X(\theta_0)\right)^2
+\frac{\bar K_X}{2}\left(\bar\psi^2_X(\theta)-\bar\psi^2_X(\theta_0)\right).\notag 
\end{align}

Then note that
\begin{equation}
\underset{\theta\in\Theta}{\sup}~\left|\frac{1}{n(b_T-a_T)}\log \bar R_{n,T}(\theta)
-\bar g_{Y,T}(\theta)-\bar g_{X,T}(\theta)\right|
=\left|\frac{1}{n(b_T-a_T)}\log \bar R_{n,T}(\bar\theta^*_{nT})-\bar g_{Y,T}(\bar\theta^*_{nT})-\bar g_{X,T}(\bar\theta^*_{nT})\right|,
\label{eq:mle_unicon_ss}
\end{equation}
for some $\bar\theta^*_{nT}\in\Theta$. 
As before despite the dependence of $\bar\theta^*_{nT}$ on data it can be shown that 
(\ref{eq:mle_unicon_ss}) tends to zero as $T\rightarrow\infty$.
So, it is permissible to approximate the $MLE$ by maximizing 
$$\bar g_{n,T}(\theta)=\bar g_{Y,T}(\theta)+\bar g_{X,T}(\theta)$$ with respect to $\theta$.
%
%

Let $$\bar g'_{n,T}(\theta)=\left(\frac{\partial\bar g_{n,T}(\theta)}{\partial\theta_1},\ldots,
\frac{\partial\bar g_{n,T}(\theta)}{\partial\theta_d}\right)^T,$$
and let $\bar g''_{n,T}(\theta)$ be the matrix of second derivatives. The relevant elements at $\theta=\theta_0$ are given by 
\begin{align}
\left[\frac{\partial\bar g_{n,T}(\theta)}{\partial\theta_k}\right]_{\theta=\theta_0}
&=\sqrt{\bar K_Y}\left[\frac{\partial\bar\psi_Y(\theta)}{\partial\theta_k}\right]_{\theta=\theta_0}
\frac{1}{n}\sum_{i=1}^n\frac{W_{Y_i}(b_T)-W_{Y_i}(a_T)}{b_T-a_T};\notag\\
\left[\frac{\partial^2 \bar g_{n,T}(\theta)}{\partial\theta_j\partial\theta_k}\right]_{\theta=\theta_0}
&=-\bar K_Y\left[\frac{\partial\bar\psi_Y(\theta)}{\partial\theta_j}
\frac{\partial\bar\psi_Y(\theta)}{\partial\theta_k}\right]_{\theta=\theta_0}
+\sqrt{\bar K_Y}\left[\frac{\partial^2\bar\psi_Y(\theta)}{\partial\theta_j\partial\theta_k}\right]_{\theta=\theta_0}
 \frac{1}{n}\sum_{i=1}^n\frac{W_{Y_i}(b_T)-W_{Y_i}(a_T)}{b_T-a_T}.\notag
\end{align}

In this case, the $(j,k)$-th element of the matrix
${\mathcal I}(\theta_0)$ is given by
\begin{equation*}
\left\{{\mathcal I}(\theta_0)\right\}_{jk}
=\bar K_Y\left[\frac{\partial\bar\psi_Y(\theta)}{\partial\theta_j}\frac{\partial\bar\psi_Y(\theta)}
{\partial\theta_k}\right]_{\theta=\theta_0},
\end{equation*}
and the $MLE$ $\hat\theta_{n,T}$ satisfies
\begin{equation}
{\mathcal I}^{-1}(\theta^*_{n,T})\bar g''_{n,T}(\theta^*_{n,T})\left(\hat\theta_{n,T}-\theta_0\right)
=-{\mathcal I}^{-1}(\theta^*_{n,T})\bar g'_{n,T}(\theta_0),
\label{eq:mle1_ss}
\end{equation}
where $\theta^*_{n,T}$ lies between $\theta_0$ and $\hat\theta_{n,T}$. 
It is easily seen as in Section \ref{subsec:mle_strong_consistency} that
\begin{equation}
\hat\theta_{n,T}\stackrel{a.s.}{\longrightarrow}\theta_0,
\label{eq:mle_theta_ss}
\end{equation}
as $n\rightarrow\infty$, $T\rightarrow\infty$.
\begin{theorem}
\label{theorem:mle_strong_consistency_ss}
Let the true, data-generating model be given by (\ref{eq:data_sde3}) and (\ref{eq:state_space_sde3}), but let
the data be modeled by (\ref{eq:data_sde4}) and (\ref{eq:state_space_sde4}).
Assume that (H1)--(H10), (H12)--(H15) and (H16$^\prime$) hold (for each $i=1,\ldots,n$, whenever appropriate). 
Then the $MLE$ of $\theta$ is strongly consistent in the sense that (\ref{eq:mle_theta_ss}) holds.
\end{theorem}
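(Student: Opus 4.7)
The plan is to imitate the single–process argument of Section \ref{subsec:mle_strong_consistency}, replacing $g_{Y,T}+g_{X,T}$ by its random-effects analogue $\bar g_{n,T}=\bar g_{Y,T}+\bar g_{X,T}$ and sending $n,T\to\infty$ jointly. The starting point is the uniform approximation in (\ref{eq:mle_unicon_ss}), whose right-hand side is already claimed to vanish almost surely by the same device as in Theorem~\ref{theorem:asymp_approx2}: compactness of $\Theta$ (H12), continuity of the map, and the fact that along any data-dependent sequence $\bar\theta^*_{n,T}\in\Theta$, the asymptotic approximations $L_{T,i}(\bar\theta^*_{n,T})\stackrel{a.s.}{\sim}\hat L_{T,i}(\bar\theta^*_{n,T})$ remain valid. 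This justifies replacing the log-likelihood score equations by the derivatives of $\bar g_{n,T}$.

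Next I would carry out a Taylor expansion of the approximate score equation around $\theta_0$. Since (H13) puts $\theta_0$ in the interior of $\Theta$ and, eventually almost surely, the maximizer $\hat\theta_{n,T}$ of $\bar g_{n,T}$ is also interior, one has $\bar g'_{n,T}(\hat\theta_{n,T})=0$ and hence
\begin{equation*}
0=\bar g'_{n,T}(\theta_0)+\bar g''_{n,T}(\theta^{**}_{n,T})(\hat\theta_{n,T}-\theta_0)
\end{equation*}
for some $\theta^{**}_{n,T}$ between $\theta_0$ and $\hat\theta_{n,T}$. From (\ref{eq:g1_i}), $\bar g'_{n,T}(\theta_0)$ is a fixed vector (depending on $\theta_0$) multiplied by the scalar $n^{-1}\sum_{i=1}^n(W_{Y_i}(b_T)-W_{Y_i}(a_T))/(b_T-a_T)$; this scalar is a mean-zero normal with variance $1/\{n(b_T-a_T)\}$ and, by a Borel--Cantelli argument using $(b_T-a_T)\geq T$, converges to $0$ almost surely as $n,T\to\infty$. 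From (\ref{eq:g2_i}) together with (H16$^\prime$)(vi)--(vii) and the same scalar vanishing, $\bar g''_{n,T}(\theta_0)\stackrel{a.s.}{\longrightarrow}-\mathcal I(\theta_0)$, which by (H14) is invertible. Pre-multiplying the displayed equation by $\mathcal I^{-1}(\theta^{**}_{n,T})$ in analogy with (\ref{eq:mle1_ss}) then yields
\begin{equation*}
-\mathcal I^{-1}(\theta^{**}_{n,T})\bar g''_{n,T}(\theta^{**}_{n,T})(\hat\theta_{n,T}-\theta_0)=\mathcal I^{-1}(\theta^{**}_{n,T})\bar g'_{n,T}(\theta_0),
\end{equation*}
and provided the left-hand matrix factor converges almost surely to $\mathfrak I_d$, the right-hand side drives $\hat\theta_{n,T}-\theta_0\to 0$ almost surely.

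The main obstacle is the step that justifies $\mathcal I^{-1}(\theta^{**}_{n,T})\bar g''_{n,T}(\theta^{**}_{n,T})\stackrel{a.s.}{\longrightarrow}\mathfrak I_d$ at the \emph{random} intermediate point $\theta^{**}_{n,T}$: this is what forces consistency to be established before asymptotic normality and is the real content of the theorem. To handle it I would first observe that $\hat\theta_{n,T}$ itself must lie in any neighborhood of $\theta_0$ eventually, because $\bar g_{n,T}(\theta)\to -\bar h(\theta)$ uniformly on $\Theta$ by the uniform approximation together with (H15)--(H16$^\prime$), and $-\bar h$ attains its unique maximum $0$ at $\theta_0$ by the identifiability clauses of (H11)(i),(v) lifted to $\bar\psi_Y,\bar\psi_X$. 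Hence $\hat\theta_{n,T}\to\theta_0$ a.s., so $\theta^{**}_{n,T}\to\theta_0$ a.s., and continuity of the second derivatives (H16$^\prime$(vii)) together with (\ref{eq:g2_i}) and the already-established vanishing of the Brownian scalar gives the required convergence of $\bar g''_{n,T}(\theta^{**}_{n,T})$ to $-\mathcal I(\theta_0)$. Wrapping this into the displayed identity delivers (\ref{eq:mle_theta_ss}) and hence the theorem.
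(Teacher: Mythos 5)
Your proposal is correct, and it follows the paper's overall skeleton (uniform approximation of $\frac{1}{n(b_T-a_T)}\log \bar R_{n,T}$ by $\bar g_{n,T}=\bar g_{Y,T}+\bar g_{X,T}$, Taylor expansion of the score, the information matrix $\mathcal I(\theta_0)$ from (\ref{eq:I_ss})), but it diverges at exactly the point you flag. The paper's own proof is the one-line reduction ``it is easily seen as in Section \ref{subsec:mle_strong_consistency}'', and that earlier argument concludes $\hat\theta_T\to\theta_0$ from the identity (\ref{eq:mle1}) by asserting $-\mathcal I^{-1}(\theta^*_T)\tilde g''_T(\theta^*_T)\stackrel{a.s.}{\longrightarrow}\mathfrak I_d$ at the random intermediate point; since $\tilde g''_T(\theta)$ at a general $\theta$ contains the extra terms $(\psi_Y(\theta)-\psi_Y(\theta_0))\partial^2\psi_Y$ etc.\ visible in (\ref{eq:tilde_g_prime_prime}), that assertion already presupposes $\theta^*_{n,T}\to\theta_0$ and hence the conclusion. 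Your repair --- establishing consistency first by a Wald-type argmax argument (almost sure uniform convergence of $\bar g_{n,T}$ to $-\bar h$ on the compact $\Theta$, which follows from (\ref{eq:mle_unicon_ss}), boundedness of $\bar\psi_Y$ on $\Theta$ and the vanishing of the Brownian scalar, combined with the fact that $\bar h\geq 0$ with a unique zero at $\theta_0$ by (H16$^\prime$) via (H11)(i) and (v)) --- is the correct way to break this circularity, and is a genuinely cleaner route than what the paper records. What it buys is a legitimate proof of the theorem; what it costs is that the Taylor-expansion portion of your write-up becomes redundant for consistency itself (it is only needed for Theorem \ref{theorem:mle_normality_ss}), so you could state it once and defer it. Your variance computation for $n^{-1}\sum_{i=1}^n(W_{Y_i}(b_T)-W_{Y_i}(a_T))/(b_T-a_T)$ and the Borel--Cantelli step using $(b_T-a_T)\geq T$ are correct.
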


Moreover, following the same ideas presented in Section \ref{subsec:mle_normality}, and employing 
(H15$^\prime$), it is easily seen that
asymptotic normality also holds. Formally, we have the following theorem.
\begin{theorem}
\label{theorem:mle_normality_ss}
Let the true, data-generating model be given by (\ref{eq:data_sde3}) and (\ref{eq:state_space_sde3}), but let
the data be modeled by (\ref{eq:data_sde4}) and (\ref{eq:state_space_sde4}).
Assume that (H1)--(H10),  (H12)--(H15) and (H16$^\prime$) hold (for each $i=1,\ldots,n$, whenever appropriate).  
Then
\begin{equation*}
\sqrt{n(b_T-a_T)}\left(\hat\theta_{n,T}-\theta_0\right)\stackrel{\mathcal L}{\longrightarrow}
N_d\left(0,{\mathcal I}^{-1}(\theta_0)\right),
\end{equation*}
as $n\rightarrow\infty$, $T\rightarrow\infty$.
\end{theorem}

\subsection{Asymptotic posterior normality}
\label{subsec:verification_posterior_normality_ss}

From Section \ref{subsec:mle_strong_consistency_ss} (see \ref{eq:mle_unicon_ss})
it is evident that $\frac{1}{n(b_T-a_T)}\ell_{n,T}(\theta)$, where $\ell_{n,T}(\theta)=\log L_{n,T}(\theta)$, 
can be uniformly approximated by 
\begin{equation*}
\frac{1}{n(b_T-a_T)}\bar\ell_{n,T}(\theta)=\bar g_{Y,T}(\theta)+\bar g_{X,T}(\theta)+\frac{1}{n(b_T-a_T)}\log \bar{p}_{nT}(\theta_0), 
\end{equation*}
for $\theta\in\Theta$.
%
With this approximate version, it is again easy to see that the first four
regularity conditions presented in Section \ref{subsec:regularity_conditions} trivially hold.

We now verify regularity condition (5).
Since, as $n\rightarrow\infty$, $T\rightarrow\infty$, $\hat\theta_{n,T}\stackrel{a.s.}{\longrightarrow}\theta_0$, 
\begin{equation*}
\frac{1}{n(b_T-a_T)}\bar\ell''_{n,T}(\hat\theta_{n,T})\stackrel{a.s.}{\longrightarrow}
-{\mathcal I}(\theta_0).
\end{equation*}
Thus, as before, almost surely, $$\bar\Sigma^{-1}_{n,T}\sim n(b_T-a_T)\times {\mathcal I}(\theta_0),$$
where
\begin{equation*}
\bar\Sigma^{-1}_{n,T}=\left\{\begin{array}{cc}
-\bar\ell''_{n,T}(\hat\theta_{n,T}) & \mbox{if the inverse and}\ \ \hat\theta_{n,T}\ \ \mbox{exist}\\
\mathfrak I_d & \mbox{if not},
\end{array}\right.
\end{equation*}
Hence, $$\bar\Sigma_{n,T}\stackrel{a.s.}{\longrightarrow} 0,$$ as $n\rightarrow\infty$, $T\rightarrow\infty$. 
%
Thus, regularity condition (5) holds.

For verifying condition (6), observe that
\begin{equation*}
\rho_{n,T}\left[\ell_{n,T}(\theta)-\ell_{n,T}(\theta_0)\right]
=\rho_{n,T}n(b_T-a_T)\times\frac{1}{n(b_T-a_T)}\log R_{n,T}(\theta),
\end{equation*}
where $\rho_{n,T}$ is the smallest eigenvalue of $\bar\Sigma_{n,T}$, and, as in Section \ref{subsec:mle_normality}. 
$\rho_{n,T}n(b_T-a_T)\rightarrow \bar c$, for some $\bar c>0$. 
Then, as in (\ref{eq:6_2}), it holds that
\begin{align}
\rho_{n,T}\left[\ell_{n,T}(\theta)-\ell_{n,T}(\theta_0)\right]&\stackrel{a.s.}{\longrightarrow}
-\frac{\bar c}{2}\left[\bar K_Y\left(\bar\psi_Y(\theta)-\bar\psi_Y(\theta_0)\right)^2
+\bar K_X\left(\bar\psi_X(\theta)-\bar\psi_X(\theta_0)\right)^2
\right.\notag\\
&\qquad\qquad\left.+\bar K_X\left(\bar\psi^2_X(\theta_0)-\bar\psi^2_X(\theta)\right)\right],\notag
\end{align}
for all $\theta\in \Theta\setminus\mathcal N_0(\delta)$.
Then, in the same way as in (\ref{eq:6_3}) it follows that
\begin{align}
&\underset{n\rightarrow\infty,~T\rightarrow\infty}{\lim}~
P_{\theta_0}\left(\underset{\theta\in\Theta\backslash\mathcal N_0(\delta)}{\sup}~
\rho_{n,T}\left[\ell_{n,T}(\theta)-\ell_{n,T}(\theta_0)\right]<-K(\delta)\right)=1.\notag
\end{align}
In other words, condition (6) holds.

Condition (7) can be verified essentially in the same way as in Section \ref{subsec:mle_normality}. 
As in Section \ref{subsec:mle_normality}, using continuity of the third derivatives of
$\bar\psi_Y$ and $\bar\psi_X$, as assumed in (H15$^\prime$) it can be shown
that $\bar{\ell}''_{n,T}(\theta)=O\left(-n(b_T-a_T)\times {\mathcal I}(\theta_0)+n(b_T-a_T)\delta_2\right)$,
almost surely.
It is also easy to see that $\bar\Sigma^{\frac{1}{2}}_{n,T}$ is asymptotically almost surely 
equivalent to $n^{-\frac{1}{2}}(b_T-a_T)^{-\frac{1}{2}}
{\mathcal I}^{-\frac{1}{2}}(\theta_0)$. Thus, condition (7) holds.

We summarize our result in the form of the following theorem.
\begin{theorem}
\label{theorem:theorem5_ss}
Let the true, data-generating model be given by (\ref{eq:data_sde3}) and (\ref{eq:state_space_sde3}), but let
the data be modeled by (\ref{eq:data_sde4}) and (\ref{eq:state_space_sde4}).
Assume that (H1) -- (H10), (H12)--(H15) and (H16$^\prime$) hold (for every $i=1,\ldots,n$, whenever appropriate). 
Then denoting $\bar\Psi_{n,T}=\bar\Sigma^{-1/2}_{n,T}\left(\theta-\hat\theta_{n,T}\right)$, for each compact subset 
$B$ of $\mathbb R^d$ and each $\epsilon>0$, the following holds:
\begin{equation*}
\lim_{n\rightarrow\infty,T\rightarrow\infty}P_{\theta_0}
\left(\sup_{\bar\Psi_{n,T}\in B}\left\vert\pi(\bar\Psi_{n,T}\vert\bar{\mathcal F}_{n,T})
-\varrho(\bar\Psi_T)\right\vert>\epsilon\right)=0.
\end{equation*}
\end{theorem}

\section{Asymptotic theory for multidimensional linear random effects}
\label{sec:multidimensional}

We now consider the following true, multidimensional linear random effects models based on state space $SDE$s:
for $i=1,\ldots,n$, and for $t\in[0,b_T]$,
\begin{align}
d Y_i(t)&=\phi^T_{Y_i,0} b_Y(Y_i(t),X_i(t),t)dt+\sigma_Y(Y_i(t),X_i(t),t)dW_{Y,i}(t);
\label{eq:data_sde3_mult}\\
d X_i(t)&=\phi^T_{X_i,0}b_X(X_i(t),t)dt+\sigma_X(X_i(t),t)dW_{X,i}(t).
\label{eq:state_space_sde3_mult}
\end{align}
In the above, $\phi_{Y_i,0}=\phi_{Y_i,0}(\theta_0)=\left(\psi_{Y_i,1}(\theta_0),\ldots,\psi_{Y_i,r_Y}(\theta_0)\right)^T$ 
and $\phi_{X_i,0}=\phi_{X_i,0}(\theta_0)=\left(\psi_{X_i,1}(\theta_0),\ldots,\psi_{X_i,r_X}(\theta_0)\right)^T$, where
$\left\{\psi_{Y_i,j};j=1,\ldots,r_Y\right\}$ and $\left\{\psi_{X_i,j};j=1,\ldots,r_X\right\}$ 
are known functions, $r_Y~(>1)$ and $r_X~(>1)$ are dimensions of the multivariate functions 
$\phi_{Y_i,0}$ and $\phi_{X_i,0}$; $\theta_0$ is the true set of parameters.
Also, $b_Y(y,x)=\left(b_{Y,1}(y,x),\ldots,b_{Y,r_Y}(y,x)\right)^T$ and 
$b_X(y,x)=\left(b_{X,1}(y,x),\ldots,b_{X,r_X}(y,x)\right)^T$
are $r_Y$ and $r_X$ dimensional functions respectively.

Our modeled state space $SDE$ is given, for $t\in [0,b_T]$ by:
\begin{align}
d Y_i(t)&=\phi^T_{Y_i} b_Y(Y_i(t),X_i(t),t)dt+\sigma_Y(Y_i(t),X_i(t),t)dW_{Y,i}(t);
\label{eq:data_sde4_mult}\\
d X_i(t)&=\phi^T_{X_i}b_X(X_i(t),t)dt+\sigma_X(X_i(t),t)dW_{X,i}(t),
\label{eq:state_space_sde4_mult}
\end{align}
where $\phi_{Y_i}=\phi_{Y_i}(\theta)=\left(\psi_{Y_i,1}(\theta),\ldots,\psi_{Y_i,r_Y}(\theta)\right)^T$ and 
$\phi_{X_i}=\phi_{X_i}(\theta)=\left(\psi_{X_i,1}(\theta),\ldots,\psi_{X_i,r_X}(\theta)\right)^T$. 
In this section we generalize our asymptotic theory in the case of the above multidimensional random effects models 
based on state space $SDE$s. 

Let $\tilde b_Y(y,x,\theta_0)=\phi^T_{Y,0}b_Y(y,x)$ and $\tilde b_Y(y,x,\theta)=\phi^T_{Y}b_Y(y,x)$. Also let
$\tilde b_X(x,\theta_0)=\phi^T_{X,0}b_X(x)$ and $\tilde b_X(x,\theta)=\phi^T_{X}b_X(x)$.
We assume that given any $\theta\in\Theta$, $\tilde b_{Y_i}$ and $\tilde b_{X_i}$ satisfy conditions 
(H1) -- (H7) of MB. However, we replace (H3) and (H7) with the following:
\begin{itemize}
\item[(H3$^\prime$)]  For every pair $(j_1,j_2);j_1=1,\ldots,r_Y;j_2=1,\ldots,r_Y$, and for 
every $T>0$, there exist positive constants 
$K_{Y,1,T,j_1,j_2}$, $K_{Y,2,T,j_1,j_2}$, $\alpha_{Y,1,j_1,j_2}$, $\alpha_{Y,2,j_1,j_2}$
such that for all $(x,t)\in\mathbb R\times [0,b_T]$,
$$K_{Y,1,T,j_1,j_2}\left(1-\alpha_{Y,1,j_1,j_2}x^2\right)
\leq\frac{b_{Y,j_1}(y,x,t)b_{Y,j_2}(y,x,t)}{\sigma^2_Y(y,x,t)}
\leq K_{Y,2,T,j_1,j_2}\left(1+\alpha_{Y,2,j_1,j_2}x^2\right),$$
where $K_{Y,1,T,j_1,j_2}\rightarrow K_{Y,j_1,j_2}$ and 
$K_{Y,2,T,j_1,j_2}\rightarrow K_{Y,j_1,j_2}$ as $T\rightarrow\infty$; $K_{Y,j_1,j_2}$ being  
positive constants. We further assume for $k=1,2, (b_T-a_T)|K_{Y,k,T,j_1,j_2}-K_{Y,j_1,j_2}|\rightarrow 0$, as $T\rightarrow\infty$.
\item[(H7$^\prime$)]  For every pair $(j_1,j_2);j_1=1,\ldots,r_X;j_2=1,\ldots,r_X$, and for every $T>0$, 
there exist positive constants 
$K_{X,1,T,j_1,j_2}$, $K_{X,2,T,j_1,j_2}$, $\alpha_{X,1,j_1,j_2}$, $\alpha_{X,2,j_1,j_2}$
such that for all $(x,t)\in\mathbb R\times [0,b_T]$,
$$K_{X,1,T,j_1,j_2}\left(1-\alpha_{X,1,j_1,j_2}x^2\right)
\leq\frac{b_{X,j_1}(x,t)b_{X,j_2}(x,t)}{\sigma^2_X(x,t)}
\leq K_{X,2,T,j_1,j_2}\left(1+\alpha_{X,2,j_1,j_2}x^2\right),$$
where $K_{X,1,T,j_1,j_2}\rightarrow K_{X,j_1,j_2}$ and $K_{X,2,T,j_1,j_2}\rightarrow K_{X,j_1,j_2}$, as $T\rightarrow\infty$;
$K_{X,j_1,j_2}$ being positive constants. We also assume for $k=1,2, (b_T-a_T)|K_{X,k,T,j_1,j_2}-K_{X,j_1,j_2}|\rightarrow 0$, as $T\rightarrow\infty$.
\end{itemize}

For each $i=1,\ldots,n$, we re-define $u_{Y_i|X_i,T}$ and $u_{X_i,T}$ as $r_Y$ and $r_X$ dimensional
vectors, with elements given by:
\begin{align}
u_{Y_i|X_i,T,j}&=\int_{a_T}^{b_T}\frac{b_{Y,j}(Y_i(s),X_i(s),s)}{\sigma^2_Y(Y_i(s),X_i(s),s)}dY_i(s);
~j=1,\ldots,r_Y;\notag\\ 
u_{X_i,T,j}&=\int_{a_T}^{b_T}\frac{b_{X,j}(X_i(s),s)}{\sigma^2_X(X_i(s),s)}dX_i(s);~j=1,\ldots,r_X.\notag 
\end{align}
Also, for $i=1,\ldots,n$, let us define $r_Y\times r_Y$ and $r_X\times r_X$ 
matrices $v_{Y_i|X_i,T}$ and $v_{X_i,T}$ with $(j_1,j_2)$-th elements
\begin{align}
v_{Y_i|X_i,T,j_1,j_2}
&=\int_{a_T}^{b_T}\frac{b_{Y,j_1}(Y_i(s),X_i(s),s)b_{Y,j_2}(Y_i(s),X_i(s),s)}{\sigma^2_Y(Y_i(s),X_i(s),s)}ds;
~j_1=1,\ldots,r_Y;~j_2=1,\ldots,r_Y;\notag\\ 
v_{X_i,T,j_1,j_2}&=\int_{a_T}^{b_T}\frac{b_{X,j_1}(X_i(s),s)b_{X,j_2}(X_i(s),s)}{\sigma^2_X(X_i(s),s)}ds;
~j_1=1,\ldots,r_X;~j_2=1,\ldots,r_X.\notag 
\end{align}

We assume that
\begin{itemize}
\item[(H17)] For $i=1,\ldots,n$, $v_{Y_i|X_i,T}$ and $v_{X_i,T}$ are positive definite matrices.
\end{itemize}
We also replace (H16) of MB with the following.
\begin{itemize}
\item[(H16$^{\prime\prime}$)]
For every $\theta\in\Theta\cup\{\theta_0\}$, $\psi_{Y_{i,j}}(\theta)$ and $\psi_{X_{i,j}}(\theta)$ 
are finite for all $i=1,\ldots,n; j=1,\ldots,r_Y; j=1,\ldots,r_X.$ And
\begin{align}
\psi_{Y_i,j}(\theta)&\rightarrow \bar\psi_Y(\theta,j);~j=1,\ldots,r_Y\notag\\
\psi_{X_i,j}(\theta)&\rightarrow \bar\psi_X(\theta,j);~j=1,\ldots,r_X,\notag
\end{align}
as $i\rightarrow\infty$, for all $\theta\in\Theta$, where 
$\bar\psi_{Y}(\theta)=\left(\bar\psi_Y(\theta,1),\ldots,\bar\psi_Y(\theta,r_Y)\right)^T$ and 
$\bar\psi_{X}(\theta)=\left(\bar\psi_X(\theta,1),\ldots,\bar\psi_X(\theta,r_X)\right)^T$ are
coercive functions with continuous third derivatives. 
Here, by coerciveness we mean $\left\|\bar\psi_{Y}(\theta)\right\|$ and $\left\|\bar\psi_X(\theta)\right\|$
tend to infinity as $\|\theta\|\rightarrow\infty$.
We additionally assume that $\left\|\bar\psi_Y(\theta)\right\|$ has 
finite expectation with respect to the prior $\pi(\theta)$.
Also, for $k=1,2$ letting $K_{Y,k,i}$ and $K_{X,k,i}$ denote matrices with $(j_1,j_2)$-th elements $K_{Y,k,i,j_1,j_2}$
and $K_{X,k,i,j_1,j_2}$ as in (H3$^\prime$) and (H7$^\prime$), we assume
\begin{align}
K_{Y,k,i}&\rightarrow \bar K_Y;\notag\\
K_{X,k,i}&\rightarrow \bar K_X,\notag
\end{align}
as $i\rightarrow\infty$, where $\bar K_Y$ and $\bar K_X$ are positive definite matrices,
and that $$\left(\bar\psi_{X}(\theta_0)-\bar\psi_{X}(\theta)\right)^T
\bar K_X\left(\bar\psi_{X}(\theta_0)+\bar\psi_{X}(\theta)\right)\geq 0$$ for all $\theta\in\Theta$. We assume that for every sequence $\{\theta_T:T>0\}$ such that $\parallel\theta_T\parallel\rightarrow\infty$, 
as $T\rightarrow\infty$,
\begin{enumerate}
\item[(i)]$(b_T-a_T)|(\phi_{Y_i}(\theta_T))^T(K_{Y,k,T,i}-K_{Y,k,i})(\phi_{Y_i}(\theta_T))|\rightarrow 0$, 
for every $i=1,2,\ldots$, and for $k=1,2$; 
\item[(ii)]$(b_T-a_T)|(\phi_{X_i}(\theta_T))^T(K_{X,k,T,i}-K_{X,k,i})(\phi_{X_i}(\theta_T))|\rightarrow 0$
for every $i=1,2,\ldots$, and for $k=1,2$; 
\item[(iii)]$C_1(b_T-a_T)\leq \parallel\bar\psi_Y(\theta_T)-\bar\psi_Y(\theta_0)\parallel^8
\leq C_2(b_T-a_T)$, for some $C_1,C_2>0$.
\end{enumerate}
\end{itemize}

\subsection{True and modeled likelihood in the multidimensional case}
\label{subsec:state_space_likelihood_bounds_mult}

Here the true likelihood is given by 
\begin{equation*}
\bar p_{n,T}(\theta_0)=\prod_{i=1}^np_{T,i}(\theta_0), 
\end{equation*}
where
\begin{align}
 p_{T,i}(\theta_0)&=\int \exp\left(\phi^T_{Y_i,0} u_{Y_i|X_i,T}
-\frac{1}{2}\phi^T_{Y_i,0} v_{Y_i|X_i,T}\phi_{Y_i,0}\right)
\times \exp\left(\phi^T_{X_i,0} u_{X_i,T}-\frac{1}{2}\phi^T_{X_i,0}v_{X_i,T}\phi_{X_i,0}\right)dQ_{T,X_i}.\notag
\end{align}
The modeled likelihood is given by
$\bar L_{n,T}(\theta)=\prod_{i=1}^nL_{T,i}(\theta)$, where
\begin{align}
L_{T,i}(\theta)&=\int \exp\left(\phi^T_{Y_i} u_{Y_i|X_i,T}-\frac{1}{2}\phi^T_{Y_i}v_{Y_i|X_i,T}\phi_{Y_i}\right)
\times \exp\left(\phi^T_{X_i} u_{X_i,T}-\frac{1}{2}\phi^T_{X_i}v_{X_i,T}\phi_{X_i}\right)dQ_{T,X_i}.\notag 
\end{align}

The inequalities (2.11) and (2.12)  of MB hold for $i=1,\ldots,n$, but now
$K_{Y,1,T}$ and $K_{Y,2,T}$ in (2.13) and (2.14) of MB
are matrices with $(j_1,j_2)$-th elements $K_{Y,1,T,j_1,j_2}$ and $K_{Y,2,T,j_1,j_2}$, respectively,
as described in (H7$^\prime$).

As before the likelihood $\bar L_{n,T}(\theta)=\prod_{i=1}^nL_{T,i}(\theta)$ is easily seen to be measurable, so that
(A1) of MB holds.

It is easily seen, as before, that
\begin{equation*}
\underset{n\rightarrow\infty}{\lim}\underset{T\rightarrow\infty}{\lim}
~\frac{1}{n(b_T-a_T)}\log \bar R_{n,T}(\theta) = -\bar h(\theta),
\end{equation*}
almost surely, where
\begin{align}
\bar h(\theta) &= \frac{1}{2}\left[\left(\bar\psi_{Y}(\theta)-\bar\psi_{Y}(\theta_0)\right)^T\bar K_Y
\left(\bar\psi_{Y}(\theta)-\bar\psi_{Y}(\theta_0)\right)+
\left(\bar\psi_{X}(\theta)-\bar\psi_{X}(\theta_0)\right)^T\bar K_X
\left(\bar\psi_{X}(\theta)-\bar\psi_{X}(\theta_0)\right)\right]\notag\\
&\qquad\qquad
+\left(\bar\psi_{X}(\theta_0)-\bar\psi_{X}(\theta)\right)^T\bar K_X\left(\bar\psi_{X}(\theta_0)+\bar\psi_{X}(\theta)\right).\notag
\end{align}
Thus, (A2) of MB holds, and as before, (A3) of MB is also clearly seen to hold, 

The way of verification of (A4) of MB remains the same as in Section 4.4 of MB, with 
$I=\left\{\theta:\bar h(\theta)=\infty\right\}$. 
Condition (A5) can be seen to hold as before, and defining \\
$\bar{\mathcal G}_{n,T}=\left\{\theta:\left\|\bar\psi_Y(\theta)\right\|
\leq\exp\left(\bar\beta n(b_T-a_T)\right)\right\}$, where $\bar\beta>2\bar h\left(\Theta\right)$ and
$\bar\alpha=E\left\|\bar\psi_Y(\theta)\right\|$, (A5) is verified as before.
That (A6) and (A7) of MB hold can be argued as before. 
We thus have the following theorem.
\begin{theorem}
\label{theorem:bayesian_convergence_ss_mult}
Assume that the data was generated by the true model given by (\ref{eq:data_sde3_mult}) 
and (\ref{eq:state_space_sde3_mult}), but
modeled by (\ref{eq:data_sde4_mult}) and (\ref{eq:state_space_sde4_mult}).
Assume that (H1)--(H10) of MB hold (for each $i=1,\ldots,n$, whenever appropriate), with (H3) and (H7) replaced with
(H3$^\prime$) and (H7$^\prime$), respectively. Also assume (H13)--(H15) of MB , (H16$^{\prime\prime}$) and (H17). 
Consider any set $A\in\mathcal T$ with $\pi(A)>0$ and $\bar h\left(A\right)>\bar h\left(\Theta\right)$. Then almost surely,
\begin{equation*}
\underset{n\rightarrow\infty,~T\rightarrow\infty}{\lim}~\pi(A|\bar{\mathcal F}_{n,T})=0.
\end{equation*}
Moreover, if $\bar\beta>2\bar h(A)$, then almost surely,
\begin{equation*}
\underset{n\rightarrow\infty,~T\rightarrow\infty}{\lim}~\frac{1}{n(b_T-a_T)}\log\pi(A|\bar{\mathcal F}_{n,T})=-\bar J(A).
\end{equation*}
\end{theorem}

\subsection{Strong consistency and asymptotic normality of the maximum likelihood estimator of $\theta$}
\label{subsec:mle_strong_consistency_ss_mult}

We now assume, in addition to (H16$^{\prime\prime}$) that
\begin{itemize}
\item[(H16$^{\prime\prime\prime}$)]
$\bar\psi_{Y}(\theta)$ and $\bar\psi_{X}(\theta)$ are thrice continuously differentiable
and that the first two derivatives of $\bar\psi_{X}(\theta)$ vanish at $\theta_0$.
\end{itemize}

In this case, the $MLE$ satisfies 
(\ref{eq:mle1_ss}) with the appropriate multivariate extension as detailed
above
where the $(j,k)$-th element of ${\mathcal I}(\theta)$
is given by
\begin{equation}
\left\{{\mathcal I}(\theta)\right\}_{jk}
=\left(\frac{\partial\bar\psi_Y(\theta)}{\partial\theta_j}\right)^T\bar K_Y
\left(\frac{\partial\bar\psi_Y(\theta)}{\partial\theta_k}\right).
\label{eq:I_ss_mult}
\end{equation}
Thus, as before, it can be shown that strong consistency of the $MLE$ of the form 
(\ref{eq:mle_theta_ss}) holds.
Formally, we have the following theorem.
\begin{theorem}
\label{theorem:mle_strong_consistency_ss_mult}
Assume that the data was generated by the true model given by (\ref{eq:data_sde3_mult}) and (\ref{eq:state_space_sde3_mult}), but
modeled by (\ref{eq:data_sde4_mult}) and (\ref{eq:state_space_sde4_mult}).
Assume that (H1)--(H10), (H12) -- (H15) of MB hold 
(for each $i=1,\ldots,n$, whenever appropriate), with (H3) and (H7) replaced with
(H3$^\prime$) and (H7$^\prime$), respectively. Also assume (H16$^{\prime\prime}$), 
(H16$^{\prime\prime\prime}$) and (H17). 
Then the $MLE$ is strongly consistent, that is,
\begin{equation*}
\hat\theta_{n,T}\stackrel{a.s.}{\longrightarrow}\theta_0,
\end{equation*}
as $n\rightarrow\infty$, $T\rightarrow\infty$.
\end{theorem}
Asymptotic normality of the form (7.28) of MB also holds, where the elements of the information
matrix ${\mathcal I}(\theta_0)$ are given by (\ref{eq:I_ss_mult}). 
Here the formal theorem is given as follows.
\begin{theorem}
\label{theorem:mle_normality_ss_mult}
Assume that the data was generated by the true model given by (\ref{eq:data_sde3_mult}) and (\ref{eq:state_space_sde3_mult}), but
modeled by (\ref{eq:data_sde4_mult}) and (\ref{eq:state_space_sde4_mult}).
Assume that (H1)--(H10), (H12) -- (H15) of MB hold (for each $i=1,\ldots,n$, whenever appropriate), with (H3) and (H7) replaced with
(H3$^\prime$) and (H7$^\prime$), respectively. Also assume (H16$^{\prime\prime}$), (H16$^{\prime\prime\prime}$) and (H17). 
Then
\begin{equation*}
\sqrt{n(b_T-a_T)}\left(\hat\theta_{n,T}-\theta_0\right)\stackrel{\mathcal L}{\longrightarrow}
N_d\left(0,{\mathcal I}^{-1}(\theta_0)\right),
\end{equation*}
as $n\rightarrow\infty$, $T\rightarrow\infty$.
\end{theorem}

\subsection{Asymptotic posterior normality in the case of multidimensional random effects}
\label{subsec:verification_posterior_normality_ss_mult}

All the conditions (1)--(7) of MB can be verified exactly as in Section \ref{subsec:verification_posterior_normality_ss}, 
only noting the appropriate multivariate extensions.
Hence, the following theorem holds:
\begin{theorem}
\label{theorem:theorem5_ss_mult}
Assume that the data was generated by the true model given by (\ref{eq:data_sde3_mult}) 
and (\ref{eq:state_space_sde3_mult}), but
modeled by (\ref{eq:data_sde4_mult}) and (\ref{eq:state_space_sde4_mult}).
Assume that (H1)--(H10), (H12)--(H15) of MB  hold (for each $i=1,\ldots,n$, whenever appropriate), 
with (H3) and (H7) replaced with
(H3$^\prime$) and (H7$^\prime$), respectively. 
Also assume (H16$^{\prime\prime}$), (H16$^{\prime\prime\prime}$) and (H17).
Then denoting $\bar\Psi_{n,T}=\bar\Sigma^{-1/2}_{n,T}\left(\theta-\hat\theta_{n,T}\right)$, for each compact subset 
$B$ of $\mathbb R^d$ and each $\epsilon>0$, the following holds:
\begin{equation*}
\lim_{n\rightarrow\infty,T\rightarrow\infty}P_{\theta_0}
\left(\sup_{\bar\Psi_{n,T}\in B}\left\vert\pi(\bar\Psi_{n,T}\vert\bar{\mathcal F}_{n,T})
-\varrho(\bar\Psi_T)\right\vert>\epsilon\right)=0.
\end{equation*}
\end{theorem}

\section{Asymptotics in the case of discrete data}
\label{sec:discrete_data}
In similar lines as \ctn{Maud12} suppose that we observe data at times $t^m_k=t_k=k\frac{b_T-a_T}{m}$; $k=0,1,2,\ldots$.
We then set
\begin{align}
v^m_{Y|X,T}&=\sum_{k=0}^{m-1}\frac{b^2_Y(Y(t_k),X(t_k),t_k)}{\sigma^2_Y(Y(t_k),X(t_k),t_k)}
\left(t_{k+1}-t_k\right)\label{eq:v_Y_discrete}\\
u^m_{Y|X,T}&=\sum_{k=0}^{m-1}\frac{b_Y(Y(t_k),X(t_k),t_k)}{\sigma^2_Y(Y(t_k),X(t_k),t_k)}
\left(Y(t_{k+1})-Y(t_k)\right)\label{eq:u_Y_discrete}\\
v^m_{X,T}&=\sum_{k=0}^{m-1}\frac{b^2_X(X(t_k),t_k)}{\sigma^2_X(X(t_k),t_k)}\left(t_{k+1}-t_k\right)
\label{eq:v_X_discrete}\\
u^m_{X,T}&=\sum_{k=0}^{m-1}\frac{b_X(X(t_k),t_k)}{\sigma^2_X(X(t_k),t_k)}
\left(X(t_{k+1})-X(t_k)\right).\label{eq:u_X_discrete}
\end{align}
For any given $T$, the actual $MLE$ or the posterior distribution can be obtained (perhaps numerically) after replacing (\ref{eq:v_Y}) -- (\ref{eq:u_X}) 
with (\ref{eq:v_Y_discrete}) -- (\ref{eq:u_X_discrete}) 
in the likelihood. 

For asymptotic inference we assume that $m=m(T)$, and that $\frac{m(T)}{b_T-a_T}\rightarrow \infty$, as $T\rightarrow\infty$. 
Then note that, since $\frac{1}{b_T-a_T}\log R_T(\theta)$ can be uniformly approximated by 
$\tilde g_T(\theta)=\tilde g_{Y,T}(\theta)+\tilde g_{X,T}(\theta)$ (as in MB) for $\theta\in\mathcal G_T\setminus I$
in the case of Bayesian consistency and for $\theta\in\Theta$ for $\Theta$ compact, for asymptotics
of $MLE$ and asymptotic posterior normality, and since $\tilde g_T(\theta)$ involve the data only through 
$(W_Y(b_T)-W_Y(a_T))/\sqrt{b_T-a_T}$, 
asymptotically the discretized version agrees with the continuous version.
This implies that, even with discretization, all our Bayesian and classical asymptotic results remain valid 
in all the $SDE$ setups considered in MB.

\normalsize
\bibliographystyle{natbib}
\bibliography{irmcmc}

\end{document}